\patchcmd{\section}{\scshape}{\bfseries}{}{}
\renewcommand{\@secnumfont}{\bfseries}
\DeclareRobustCommand{\SkipTocEntry}[5]{}
\newtheorem{introtheorem}{Theorem}
\theoremstyle{definition}
\newtheorem*{introdefinition*}{Definition}
\theoremstyle{plain}
\def\F{\mathbf{F}}
\def\text#1{\hbox{#1}}
\def\Q{{\mathbf Q}}
\def\Z{{\mathbf Z}}
\def\N{{\mathbf N}}
\def\Hom{{\rm Hom}}
\def\Ass{{\rm Ass}}
\def\ann{{\rm ann}}
\def\L{{\rm L}}
\theoremstyle{plain}
\newtheorem{theorem}{Theorem}[section]
\newtheorem{proposition}[theorem]{Proposition}
\newtheorem{lemma}[theorem]{Lemma}
\newtheorem{corollary}[theorem]{Corollary}
\theoremstyle{definition}
\newtheorem{definition}[theorem]{Definition}
\newtheorem{example}[theorem]{Example}
\newtheorem{remark}[theorem]{Remark}
\newtheorem{question}[theorem]{Question}
\renewcommand{\geq}{\geqslant}
\renewcommand{\leq}{\leqslant}
\begin{document}

 \title{Rado's theorem for rings and modules}

\author[J.~Byszewski]{Jakub Byszewski}
\address[J.~Byszewski]{\normalfont Faculty of Mathematics and Computer Science, Institute of Mathematics, Jagiellonian University,
{\L}ojasiewicza~6, 30-348 Krak\'ow, Poland}
\email{jakub.byszewski@uj.edu.pl}

\author[E.~Krawczyk]{El\.zbieta Krawczyk}
\address[E.~Krawczyk]{\normalfont Faculty of Mathematics and Computer Science, Institute of Mathematics, Jagiellonian University,
{\L}ojasiewicza~6, 30-348 Krak\'ow, Poland}
\email{ela.krawczyk@student.uj.edu.pl}

\subjclass[2010]{05D10} 
\keywords{\normalfont Partition regularity, Ramsey theory, Rado's theorem}
\thanks{Declarations of interest: none}

\maketitle
\begin{abstract} We extend classical results of Rado on partition regularity of systems of linear equations with integer coefficients to the case when the coefficient ring is either an arbitrary integral domain or a noetherian ring. In particular, we show that a system of homogeneous linear equations  over an infinite integral domain is partition regular if and only if the corresponding matrix satisfies the columns conditions. The crucial idea is to study partition regularity for general modules rather than only for rings. Contrary to previous techniques, our approach is independent of the characteristic of the coefficient ring.

\end{abstract}

\section*{Introduction}

We say that a system of equations with variables taking values in a set $S$ is partition regular over $S$ if for every finite partition of  $S$ one cell of the partition contains a solution to the system. Many famous results in Ramsey theory (including Schur's theorem and van der Waerden's theorem) can be stated as saying that a certain system of equations is partition regular. The problem of whether a given system of equations is partition regular or not has been widely studied (see, e.g.\ \cite{BHL2013, BDHL, Deuber1973, Deuber1975, DeuberHindman1987, Moreira2017, Rado1933, Rado1943}). The first general result which concerns partition regularity of  systems of linear equations with integer coefficients over the set of positive integers is due to Rado \cite{Rado1933, Rado1943}. For a single equation, it says that an equation $$a_1 x_1 + a_2 x_2 +\dots +a_n x_n=0$$ with nonzero integer coefficients is partition regular if and only if $\sum_{i\in I} a_i =0$ for some nonempty $I \subset \{1,\dots,n\}$. In general, Rado's theorem states that a system of linear equations of the form $\mathbf{A} \mathbf{x} = \mathbf{0},$ where $\mathbf{A}$ is a matrix with integer entries, is partition regular if and only if the matrix $\mathbf{A}$ satisfies the so-called columns condition, stated below for an arbitrary integral domain.

\begin{definition}\label{def:cc} Let $\mathbf{A}$ be a $k\times l$ matrix with entries in an integral domain $R$. Let $K$ be the fraction field of $R$. Denote the columns of $\mathbf{A}$ by $ \mathbf{c}_1,\dots, \mathbf{c}_l \in R^k$. We say that $\mathbf{A}$ satisfies the \emph{columns condition} if there exists an integer $m\geq 0$ and a partition of the set of column indices $\{1,\dots, l\}=I_0 \cup I_1 \cup \dots \cup I_m$ such that $\sum_{i \in I_0} \mathbf{c}_i =\mathbf{0}$ and such that for $t\in\{1,\dots,m\}$ the vector $\sum_{i \in I_t} \mathbf{c}_i$ lies in the $K$-vector space generated by the columns $c_j$ with $j\in I_0\cup\dots\cup I_{t-1}$. \end{definition}

In this paper we study partition regularity in the more general context of (commutative) rings and modules. Let $R$ be a ring, let $\mathbf{A}$ be a matrix with entries in $R$, and let $M$ be an $R$-module. It is meaningful to ask whether the system of equations $\mathbf{A}\mathbf{m}=\mathbf{0}$ is \emph{partition regular} over $M$ in the sense that for every finite partition of $M$ one cell of the partition contains a nontrivial solution to the system $\mathbf{A}\mathbf{m}=\mathbf{0}$. Such problems have been considered before in specific cases, e.g.\ in  \cite{Rado1943, BDHL}  (for commutative rings), in  \cite{Deuber1973, Deuber1975} (for integral equations over abelian groups) and in \cite{BDH1992} (for vector spaces over finite fields).  Studying partition regularity for modules rather than just for rings  gives us extra technical flexibility and allows us to use notions and methods from commutative algebra. In particular, we solve completely the problem of whether the system $\mathbf{A}\mathbf{x}=\mathbf{0}$ is partition regular over a ring $R$ if $R$ is either an integral domain or a noetherian ring.

Our first main result shows that the columns condition characterises partition regularity of homogeneous linear equations over every infinite integral domain.

\begin{introtheorem}\label{mainthmB} Let $R$ be an infinite integral domain and let
$\mathbf{A}$ be a matrix with entries in $R$. Then the system $\mathbf{A}\mathbf{x}=\mathbf{0}$ is partition regular over $R$ if and only if $\mathbf{A}$ satisfies the columns condition.
\end{introtheorem}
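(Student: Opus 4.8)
The plan is to prove both implications, starting with the easier sufficiency direction. Suppose $\mathbf{A}$ satisfies the columns condition over the domain $R$ with fraction field $K$. Since the columns condition is a statement about linear dependence over $K$, and since $R$ is infinite, I would first clear denominators: there exist nonzero elements $d_0,\dots,d_m\in R$ witnessing the generalised columns condition, and condition~\eqref{jestjuzbardzopozno} is automatic because every nonzero ideal of an infinite domain is infinite. Thus the columns condition over an infinite domain implies the generalised columns condition, and then \cite[Theorem 2.4]{BDHL} gives partition regularity of $\mathbf{A}\mathbf{x}=0$ over $R$. This handles one direction with essentially no new work.

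For the converse, suppose $\mathbf{A}\mathbf{x}=0$ is partition regular over the infinite domain $R$. The natural route is to reduce to the classical Rado theorem over $\Z$, or more precisely to pass to a situation where the known machinery applies. I would first observe that $R$, being an infinite domain, contains a subring which is either $\Z$ (in characteristic zero) or $\F_p[t]$ for some prime $p$ (in characteristic $p$); in either case this subring is infinite, and its fraction field embeds in $K$. However, rather than trying to descend the coloring, the cleaner approach available here is to use the module-theoretic framework: view $R$ as a module over itself, or better, use Theorem~A-style reasoning. Actually, since $R$ is a domain, $(0)$ is a prime ideal, and one expects that partition regularity over $R$ should force a columns-condition-type statement directly. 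The key technical input will be a result — presumably proved earlier in the paper for noetherian domains or in general — characterizing partition regularity over a domain $R$ in terms of the columns condition, via an analysis of which colorings obstruct solutions. The honest structure is: partition regularity over $R$ implies partition regularity over the field $K$ (by base-changing a coloring of $K$ to a coloring of $R$ via a section on a set of representatives, using that solutions in $R$ give solutions in $K$), and then one needs that partition regularity over an infinite field $K$ implies the columns condition.

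So the crux reduces to the field case: if $K$ is an infinite field and $\mathbf{A}\mathbf{x}=0$ is partition regular over $K$, then $\mathbf{A}$ satisfies the columns condition. For this I would adapt Rado's original argument. One builds, for a matrix failing the columns condition, an explicit finite coloring of $K$ with no monochromatic nontrivial solution. Over $\Q$ (or $\Z$), Rado uses a coloring based on the leading nonzero digit in a base-$p$ expansion for a large prime $p$; over a general infinite field one replaces this with a coloring derived from a suitable valuation or, when $K$ has positive characteristic and is algebraic over $\F_p$, by embedding a relevant finitely generated subfield and using a place. The main obstacle is exactly this last point: constructing the obstructing coloring uniformly over all infinite fields, in particular handling fields like $\overline{\F_p}$ which are algebraic over a finite field and admit no nontrivial real-valued absolute value. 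I expect the paper handles this either by a compactness/ultrafilter reduction to finitely generated fields (where function-field places or transcendence-degree arguments become available) or by the clever observation that for the purpose of the columns condition one may pass to a finitely generated $\Z$-subalgebra of $R$ containing all the matrix entries, reducing to a finitely generated domain where geometric methods apply. Making that reduction rigorous — showing that a monochromatic-solution-free coloring of the big field can be pulled back from a small, well-behaved subfield while preserving nontriviality of solutions — is where the real care is needed.
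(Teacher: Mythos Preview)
Your sufficiency direction is correct and matches the paper exactly: over an infinite domain the columns condition implies the generalised columns condition, and then \cite[Theorem~2.4]{BDHL} applies.

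For necessity, your high-level architecture (pass to the fraction field, reduce to a finitely generated $\Z$-subalgebra, build an obstructing colouring) is the right shape, and you are honest that the construction of the colouring is where the difficulty lies. But two concrete ingredients are missing, and both are essential; without them the plan does not close.

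First, you never say how to extract the partition $I_0\cup I_1\cup\dots\cup I_m$ from a monochromatic solution. The paper does this by taking inner products of $\mathbf{A}\mathbf{m}=0$ with auxiliary vectors $\mathbf{v}$ and $\mathbf{v}_I\in R^k$ (for each $I\subset\{1,\dots,l\}$, chosen orthogonal to the columns $\mathbf{c}_i$ with $i\in I$ but not orthogonal to any ``bad'' partial sum $\sum_{j\in J}\mathbf{c}_j$), thereby reducing the system to a single equation at each stage and building $I_0,I_1,\dots$ inductively. These vectors, together with the inverses of the relevant inner products, are adjoined to the finitely generated $\Z$-algebra $R'$ before any colouring is chosen.

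Second, the obstructing colouring is not a single valuation or place. The paper's construction is the $\mathfrak{m}$-colouring: one chooses a maximal ideal $\mathfrak{m}$ of $R''=R'[t]$ such that $R''_{\mathfrak{m}}$ is a \emph{regular} local ring (existence is Lemma~\ref{regularlocus}, using that $\Z$ is excellent), picks a regular system of parameters $\pi_1,\dots,\pi_t$, and then \emph{iterates} through the successive quotients $R''_{\mathfrak{m}}/(\pi_1,\dots,\pi_i)$, applying the $\pi_{i+1}$-adic valuation at each step and finally landing in the finite residue field $R''/\mathfrak{m}$. The single-equation lemma (Lemma~\ref{oneequationdom}) is proved by induction on $t=\dim R''_{\mathfrak{m}}$. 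Your worry about $\overline{\F_p}$ is resolved not by finding a valuation on $\overline{\F_p}$ itself, but by first adjoining a transcendental $t$ (Lemma~\ref{auxlemma:domains} transfers partition regularity from $K$ to $K'(t)$), which guarantees that the finitely generated $\Z$-algebra has positive Krull dimension and hence a regular maximal ideal of positive height. A single place will not do the job when $\dim R''_{\mathfrak{m}}>1$; the inductive descent through the regular sequence is the heart of the argument, and it is exactly what your proposal leaves unspecified.
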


The fact that the system $\mathbf{A}\mathbf{x}=\mathbf{0}$ is partition regular over an infinite integral domain $R$ if $\mathbf{A}$ satisfies the columns condition follows from \cite{BDHL}. In the case when $R$ is a subring of the complex numbers Theorem \ref{mainthmB} has been proved by Rado  \cite{Rado1943}, and the case when $R$ is of characteristic zero can be obtained from it by a compactness argument (using, e.g.\ Lemma \ref{auxlemma:domains}). Thus the main interest of Theorem \ref{mainthmB} is when the ring $R$ has positive characteristic. The advantage of our method is that it provides a uniform approach which works regardless of the characteristic. 

Let us give some details about the proof of Theorem \ref{mainthmB}. The crucial argument is to show that a matrix that does not satisfy the columns condition is not partition regular. To this end, Rado introduced the notion  of $c_p$ colourings, defined as follows: For a prime number $p$, the colouring $c_p$ assigns to a positive integer $n$ the least nonzero digit of $n$ in base $p$. Rado proved that if a system of equations $\mathbf{A}\mathbf{x}=\mathbf{0}$ with integer coefficients is partition regular over the set of positive integers with respect to all the colourings  $c_p$, then the matrix $\mathbf{A}$ satisfies the columns condition. In order to prove Theorem \ref{mainthmB}, we first reduce the problem to the case when $R$ is a finitely generated $\Z$-algebra. We then construct a family of colourings $c_{\mathfrak m}$, where the role of a prime $p$ in Rado's argument is played by an arbitrary maximal ideal $\mathfrak m$ of $R$ such that the local ring $R_{\mathfrak m}$ is regular. We  prove that if a system of equations $\mathbf{A}\mathbf{x}=\mathbf{0}$ is partition regular over $R$ with respect to all the  colourings $c_{\mathfrak m}$, then the matrix $\mathbf{A}$ satisfies the columns condition. 

In order to state our second main result, we recall the notion of an associated prime.  A prime ideal $\mathfrak{p}$ of a ring $R$ is an \emph{associated prime} of an $R$-module $M$ if there exists an element $m\in M$ with $\mathrm{ann}(m)=\mathfrak{p}$, where $\mathrm{ann}(m)=\{r\in R\mid rm=0\}$. A finitely generated module over a noetherian ring has only finitely many associated primes. 
The following result reduces the study of partition regularity for finitely generated modules over noetherian rings to the study of partition regularity over noetherian integral domains.

\begin{introtheorem}\label{mainthmA} Let $M$ be a finitely generated module over a noetherian ring $R$ and let
$\mathbf{A}$ be a matrix with entries in $R$. Then the system $\mathbf{A}\mathbf{m}=\mathbf{0}$ is partition regular over $M$  if and only if there exists an associated prime $\mathfrak{p}$ of $M$  such that the system $\mathbf{A}\mathbf{x}=\mathbf{0}$  is partition regular over $R/\mathfrak{p}$.
\end{introtheorem}

Our study is inspired by a paper of Bergelson, Deuber, Hindman, and Lefmann \cite{BDHL}, where the authors studied equations with coefficients in arbitrary rings. To this end, they introduced the following generalisation of the columns condition. This property is called the columns condition in \cite{BDHL}, but in order to distinguish it from the simpler condition considered in  Definition \ref{def:cc}, we will refer to it as the generalised columns condition. The generalised columns condition is easily seen to be equivalent to the columns condition when $R$ is an infinite integral domain.

\begin{definition}\label{def:gcc} Let $\mathbf{A}$ be a $k\times l$ matrix with entries in a ring $R$. Denote the columns of $\mathbf{A}$ by $ \mathbf{c}_1,\dots, \mathbf{c}_l \in R^k$. We say that $\mathbf{A}$ satisfies the \emph{generalised columns condition} if there exists an integer $m\geq 0$, a partition of the set of column indices $\{1,\dots, l\}=I_0 \cup I_1 \cup \dots \cup I_m$, and elements $d_0,d_1,\dots,d_{m} \in R\setminus \{0\}$ such that the following conditions hold: \begin{enumerate} \item $d_0\cdot \sum_{i \in I_0} \mathbf{c}_i =\mathbf{0}$. \item For $t\in\{1,\dots,m\}$ the vector $d_t \cdot \sum_{i \in I_t} \mathbf{c}_i$ lies in the $R$-module generated by the columns $c_j$ with $j\in I_0\cup\dots\cup I_{t-1}$. \item \label{jestjuzbardzopozno} If $m>0$, then for each $n\geq 0$ the ideal $ d_0 (d_1\cdots d_{m})^n R$ is infinite.\end{enumerate} \end{definition}

In \cite[Theorem 2.4]{BDHL} it is shown that if a matrix $\mathbf{A}$ over an arbitrary ring satisfies the generalised columns condition, then the system of equations $\mathbf{A}\mathbf{x}=\mathbf{0}$ is partition regular over $R$. However, this condition is in general \emph{not necessary}, e.g.\ when $R=\prod_{i=1}^{\infty} {\Z}/4{\Z}$. A ring $R$ is called  a \emph{Rado ring} if the generalised columns condition is equivalent to partition regularity for all matrices with entries in $R$. It follows from Theorem \ref{mainthmB} that every integral domain is a Rado ring. Non-examples of Rado rings have been scarce. In fact, the only previously known example of a non-Rado ring comes from \cite[Theorem 3.5]{BDHL}, where it is shown that the ring $R=\prod_{i=1}^{\infty} {\Z}/n{\Z}$ is a Rado ring if and only if $n$ is squarefree. This example is a bit unsatisfactory since the ring in question is not noetherian. In Theorem \ref{thm:Radonoeth} we classify all noetherian Rado rings and in particular prove that all reduced (i.e.\ without nonzero nilpotents) noetherian rings are Rado. We also show that the ring $R=(\Z/p^2\Z)[X]$ is not a Rado ring.

One of the crucial ideas of this paper is to study partition regularity for general modules rather than just for rings. This allows for a uniform treatment of rings, their ideals, quotient rings and localisations, and often gives us freedom to change the coloured space to some object with better algebraic properties (while keeping the matrix in the same coefficient ring). For instance, in Proposition \ref{prop:PR_of_quotients} we show that for any ring $R$ and any ideal $I\subset R$, a matrix $\mathbf{A}$ (with entries in $R$) which  is partition regular over $R$ has to be partition regular over one of the $R$-modules $I$ or $R/I$. This is the backbone of many inductive arguments that we employ to prove partition regularity results for rings; such arguments appear e.g.\ in the proof of Theorem \ref{mainthmA}  or in the proof of Proposition \ref{thm:characterisation_of_products} (which characterises partition regularity over the infinite product ring $R=\prod_{i=1}^{\infty} {\Z}/n{\Z}$). In the case when the ring $R$ is an integral domain, partition regularity over $R$ is equivalent to partition regularity over the fraction field of $R$ (see Proposition \ref{prop:PR_under_localisation}). This is exploited e.g.\ in  Lemma \ref{auxlemma:domains}, which plays an important role in the proof of Theorem \ref{mainthmB}, where it allows us  to reduce the problem to the case of finitely generated $\mathbf{Z}$-algebras.

We also study partition regularity of nonhomogeneous equations over arbitrary modules. In this case a system of equations $\mathbf{Am}=\mathbf{b}$ with $ \mathbf{0}\neq \mathbf{b}\in M^k$ is called partition regular over $M$ if for any finite colouring of $M$ one cell of the partition contains a solution $\mathbf{m}$. One way for a nonhomogeneous equation to be partition regular is to admit a constant solution $\mathbf m  = (m,\dots,m)$ with all the coordinates equal. In  \cite{Rado1933}, Rado showed that this is the only possibility if $R=M=\Z$. In Theorem \ref{lem:nonhomogenous-one_eq} we rather easily extend this result to the case of an arbitrary module $M$ and a single equation $a_1 m_1 + \dots + a_l m_l = b$ using a colouring result of Straus \cite{Straus}. For systems of equations, we can only prove such a result under certain quite weak assumptions. We state below a slightly simplified version of the result.

\begin{introtheorem}\label{mainthmC} Let $R$ be a ring and let $M$ be an $R$-module.  Let $\mathbf{A}$ be a $k\times l$ matrix with entries in $R$ and let $\mathbf{b}\in M^k$ be nonzero. Assume that one of the following assumptions holds: \begin{enumerate} \item[(a)] either $k=1$; or \item[(b)] $R$ is an integral domain and $M$ is a torsion-free module; or \item[(c)] $R$ is a Dedekind integral domain; or \item[(d)] $R$ is a reduced noetherian ring and $M=R$.\end{enumerate}
Then the system $\mathbf{Am}=\mathbf{b}$ is partition regular over $M$ if and only if it  has a constant solution in $M$.
\end{introtheorem}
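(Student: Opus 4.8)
The plan is to prove the nontrivial implication: if $\mathbf{Am}=\mathbf{b}$ is partition regular over $M$, then it admits a constant solution. The easy direction is immediate, since if $\mathbf{m}_0=(m_0,\dots,m_0)$ solves the system then every cell containing $m_0$ in its diagonal copy contains a solution. The starting point for the converse is the classical case, which I expect the paper to isolate as a lemma: over $\Z$ (or more generally in setting (a), a single equation over any module), the Straus colouring shows that partition regularity of $a_1m_1+\dots+a_lm_l=b$ forces $\sum a_i$ to be a unit-like divisor of $b$ in the appropriate sense, yielding the constant solution $m=(\sum a_i)^{-1}b$ when that makes sense, and otherwise a direct combinatorial obstruction. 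So case (a) is handled essentially by Theorem \ref{lem:nonhomogenous-one_eq}, which the excerpt already grants.

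For the remaining cases the strategy is a reduction to case (a) by projecting onto a suitable quotient or localization where the matrix becomes effectively a single equation, or where the homogeneous part is controlled. In case (b), with $R$ a domain and $M$ torsion-free, I would tensor with the fraction field $K$: the system $\mathbf{Am}=\mathbf{b}$ over $M$ embeds into $\mathbf{A}\mathbf{v}=\mathbf{b}$ over the $K$-vector space $M\otimes_R K$, and partition regularity should force, via a compactness/colouring argument, that $\mathbf{b}$ lies in the column span of $\mathbf{A}$ over $K$ in a way witnessed by a constant vector; one then clears denominators, using torsion-freeness to pull the solution back into $M$ itself. The key subtlety is ensuring the solution can be taken \emph{constant}, not merely that $\mathbf{b}$ is a $K$-combination of columns — this is where the colouring (a multidimensional analogue of the Straus/$c_p$ colourings) does the real work, distinguishing the constant-solution locus from the full solution space. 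In case (c), the Dedekind hypothesis lets me work one prime at a time: localize at each maximal ideal $\mathfrak p$, where $R_{\mathfrak p}$ is a DVR, reduce to the one-variable-per-row situation, and then glue the local constant solutions using that a Dedekind domain has the property that an element divisible locally everywhere (with bounded denominators) is globally divisible. Case (d), $R$ reduced noetherian and $M=R$: here I would use that $R$ embeds into the finite product $\prod R/\mathfrak p_i$ over its minimal primes, each a noetherian domain, reducing to case (b) componentwise, and then argue that partition regularity over $R$ forces compatible constant solutions across components that glue — using reducedness to rule out the nilpotent obstructions that make $(\Z/p^2\Z)[X]$ fail.

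The main obstacle I anticipate is the gluing step in cases (c) and (d): partition regularity is a global hypothesis on $M$, and when I localize or project I get partition regularity of the \emph{image} colouring, hence a local/componentwise constant solution, but a priori these need not be the restrictions of a single global constant $m\in M$. Overcoming this requires a more careful colouring argument that simultaneously tracks all the relevant primes — essentially a finite diagonal combination of the local $c_{\mathfrak p}$-type colourings of the form used in the proof of Theorem B — so that a monochromatic solution is forced to be constant in every component at once; finiteness of the relevant set of primes (guaranteed by the noetherian/Dedekind hypotheses) is exactly what makes such a finite colouring available. A secondary technical point is handling torsion: in case (c) elements of $M$ may have torsion, so before localizing one should split off the torsion submodule, observe it is a finite module supported at finitely many primes, and treat it by a direct pigeonhole argument, reducing the torsion-free quotient to the DVR analysis above.
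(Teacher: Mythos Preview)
Your handling of case (a) is correct and matches the paper. For cases (b)--(d), however, your strategy diverges from the paper's and carries a real gap that you yourself flag but do not resolve.

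The paper does \emph{not} proceed by localizing or tensoring with the fraction field and then trying to glue local constant solutions via combined colourings. Instead it reduces the system to single equations algebraically: for every $\mathbf{r}\in R^k$, the linear combination $\mathbf{r}^{\intercal}\mathbf{A}\mathbf{m}=\mathbf{r}^{\intercal}\mathbf{b}$ is a single partition-regular equation, so case (a) gives $\sum_i r_i b_i \in (\sum_i r_i a_i)M$. Writing $a_i=\sum_j a_{ij}$ and $I=(a_1,\dots,a_k)$, this produces a well-defined $R$-homomorphism $\varphi\colon I\to M$ with $\varphi(t)\in tM$ for all $t\in I$. A constant solution exists precisely when $\varphi$ is \emph{principal}, i.e.\ $\varphi(t)=tm$ for a single $m\in M$. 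The paper packages the obstruction into the module $H_R(I,M)=Z_R(I,M)/B_R(I,M)$ and then proves, by short purely algebraic arguments, that $H_R(I,M)=0$ under each of the hypotheses (b), (c), (d). No further colourings are needed beyond case (a).

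Your route runs into trouble at exactly the point this mechanism handles. In (b), finding a constant solution over $M\otimes_R K$ and ``clearing denominators'' does not recover a constant solution in $M$: scaling $m_0$ by $s\in R$ forces you to scale $\mathbf{b}$ as well, so you solve the wrong system. In (c) and (d) you correctly identify the gluing problem but your proposed fix---a diagonal combination of $c_{\mathfrak p}$-type colourings---is only a heuristic; you give no argument that a monochromatic solution for such a colouring must be constant, and in fact the $c_{\mathfrak m}$-colourings of Section~\ref{sec:dom} detect the columns condition for the \emph{homogeneous} problem, not constancy of nonhomogeneous solutions. The missing idea is precisely the linear-combination reduction plus the $H_R(I,M)$ formalism, which turns the gluing question into a clean module-theoretic vanishing statement.
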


We do not know if the assumptions of Theorem \ref{mainthmC} are necessary. In fact, we do not know any examples of modules over which nonhomogeneous equations would be partition regular without admitting constant solutions.\footnote{It has now been shown in \cite{LR-2020} that such modules do not exist (added in proof).}

It might be argued that the definition of partition regularity for nonhomogeneous equations is rather artificial, and that we should insist that the monochromatic solution to the equation  $\mathbf{Am}=\mathbf{b}$ be nonconstant. Note, however, that if $\mathbf{Am}=\mathbf{b}$ admits a constant solution, the set of solutions of $\mathbf{Am}=\mathbf{b}$ is simply a translate of the set of solutions of the homogeneous equation $\mathbf{Am}=\mathbf{0}$. Thus the question of existence of a nonconstant monochromatic solution of $\mathbf{Am}=\mathbf{b}$ (or even a solution with all the variables different) is reduced to the corresponding problem for homogeneous equations.  While we do not study these questions in this paper, we refer the interested reader to \cite{HL06} for the case when $M=\Z$ or $M=\Q$.

We briefly discuss the contents of the paper. 

In Section \ref{sec:bn} we introduce some basic properties of partition regularity over modules. In particular, we show that partition regularity behaves well with respect to short exact sequences of modules, which allows us to perform d\'evissage arguments. Several properties here generalise those proved  by Deuber for abelian groups \cite{Deuber1975}.

 In Section \ref{sec:mod}, we apply these methods to finitely generated modules over noetherian rings and prove Theorem \ref{mainthmA}. 
 
 In Section \ref{sec:dom}, we introduce  $\mathfrak m$-colourings (defined on fields that are finitely generated over $\F_p$ or $\Q$) and use them to prove Theorem \ref{mainthmB}.

 The aim of Section \ref{sec:Rado} is twofold. We first classify noetherian Rado rings, and then characterise partition regularity over the infinite product ring $\prod_{i\in I} \Z/n\Z$. Using module-theoretic techniques we are able to generalise some results and answer some questions from \cite{BDHL}.
 
 Finally, in Section \ref{sec:nonhom} we study nonhomogeneous equations. We use here a classical method to deduce the existence of a constant solution of a system of equations from the existence of such a solution for linear combinations of individual equations. For general modules this method does not always work, and we introduce certain modules that measure obstruction to its applicability. We then show that this obstruction vanishes in the cases considered in Theorem \ref{mainthmC}.

We hope that the paper will also be of interest to readers with little or no background in commutative algebra. For this reason, we have tried to recall all the  notions  and to include precise references for the results that we need. Our general reference in commutative algebra is the book of Eisenbud \cite{book:Eisenbud}.

\subsection*{Notations} All rings are assumed to be commutative and with a  unit. By $\N=\{0,1,\dots\}$ we denote the set of natural numbers, and by $\F_p$ the finite field with $p$ elements. We denote by $R^*$ the group of invertible elements of a ring $R$. Given a quotient map $R\to R/I$, we denote the image of $x\in R$ in  $R/I$ by $\bar{x}$ (the choice of $I$ will always be clear from the context). 
Given a ring $R$, an $R$-module 
$M$, and a multiplicatively closed set $S\subset R$, we denote by $S^{-1}M$ the localisation of $M$ at $S$. If ${\mathfrak p}\subset R$ is a prime ideal we denote by $M_{\mathfrak p}$ the localisation of $M$ at the multiplicatively closed set $R\setminus {\mathfrak p}$. We use  boldface letters to denote matrices and vectors. For a module $M$, we (somewhat unusually) regard elements of $M^k$ as $k\times 1$ matrices with entries in $M$. We denote the transpose of a matrix $\mathbf A$ by $\mathbf A^{\intercal}$. 

\section*{Acknowledgements} We thank the reviewers for their comments, which helped to improve the manuscript. This research was funded, in whole or in part, by National Science Center, Poland under grant no.\ DEC-2012/07/E/ST1/00185. A CC BY or equivalent licence is applied to the AAM arising from this submission, in accordance with the grant's open access conditions.

\section{Basic notions}\label{sec:bn}

Let $R$ be a ring, $\mathbf{A}$ a $k\times l$ matrix with entries in $R$, $M$ an $R$-module, and $r\geq 1$ an integer.
\begin{definition}  We say that $\mathbf{A}$  is partition regular over $M$ for $r$ colours if for every colouring $\chi \colon M  \to \{1,\dots, r\}$ of $M$ with $r$ colours there exists a nontrivial monochromatic solution of the equation $\mathbf{A}\mathbf{m} =\mathbf{0}$ with $\mathbf{m}=(m_1,\dots, m_l)^\intercal \in M^l$, i.e.\ a solution with $$\chi(m_1)=\dots=\chi(m_l) \quad \text{ and } \quad \mathbf{m} \neq \mathbf{0}.$$ 
We say that $\mathbf{A}$ is partition regular over $M$  if  $\mathbf{A}$ is partition regular over $M$ for any (finite) number  of colours. 
\end{definition}

We begin by developing some basic properties of these notions that will often be  used in later chapters. For the rest of this section we will assume that $R$ is a ring and $\mathbf{A}$ is a matrix with entries in $R$.

Let $M$ be an $R$-module and let $N$ be its submodule. Since every colouring of $M$ induces a colouring of $N$, we see that if $\mathbf{A}$ is partition regular over $N$ for $r$ colours, then it is also partition regular over $M$ for $r$ colours. We will use this fact repeatedly without explicitly referring to it. 

Partition regularity is preserved by homomorphisms, in the following sense: let $\varphi \colon R \to S$ be a ring homomorphism and let $M$ be an $S$-module. The module $M$ can be regarded as an $R$-module via restriction of scalars (with  multiplication by $r\in R$ given by $rm=\varphi(r)m$). We denote this $R$-module by $\varphi^* M$.
 
\begin{lemma} Let $\varphi \colon R \to S$ be a ring homomorphism, $M$ an $S$-module, $\mathbf{A}$ a matrix with entries in $R$, and $r\geq 1$ an integer. Let $\varphi_*\mathbf{A}$ be the image of $\mathbf{A}$ by $\varphi$. Then $\varphi_* \mathbf{A}$ is partition regular over $M$ for $r$ colours if and only if $\mathbf{A}$ is partition regular over $\varphi^* M$ for $r$ colours.
\end{lemma}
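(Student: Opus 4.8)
The plan is to unwind the definitions on both sides and observe that a colouring of $M$ is literally the same datum whether we regard $M$ as an $S$-module or as the $R$-module $\varphi^*M$, since restriction of scalars does not change the underlying set. So fix an integer $r\geq 1$. Let $\mathbf{A}=(a_{ij})$ and write $b_{ij}=\varphi(a_{ij})$, so that $\varphi_*\mathbf{A}=(b_{ij})$. The two notions of partition regularity we must compare both quantify over the same set of maps $\chi\colon M\to\{1,\dots,r\}$; the only thing that could differ is what counts as a nontrivial monochromatic solution. On the $S$-module side, a solution of $(\varphi_*\mathbf{A})\mathbf{m}=0$ is a tuple $\mathbf{m}=(m_1,\dots,m_l)^\intercal\in M^l$ with $\sum_j b_{ij}m_j=0$ for every $i$, where the products $b_{ij}m_j$ are taken in the $S$-module $M$. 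On the $\varphi^*M$ side, a solution of $\mathbf{A}\mathbf{m}=0$ is a tuple with $\sum_j a_{ij}m_j=0$ for every $i$, where now $a_{ij}m_j$ denotes the action of $a_{ij}\in R$ on $M$ via restriction of scalars.

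The key step is the tautological identity $a_{ij}m_j=\varphi(a_{ij})m_j=b_{ij}m_j$, which is precisely the definition of the $R$-module structure on $\varphi^*M$. Hence the equation $\sum_j a_{ij}m_j=0$ in $\varphi^*M$ holds if and only if $\sum_j b_{ij}m_j=0$ in $M$, so the two systems have exactly the same solution set in $M^l$. Moreover the condition $\mathbf{m}\neq 0$ and the monochromaticity condition $\chi(m_1)=\dots=\chi(m_l)$ refer only to the underlying set of $M$ and to $\chi$, so they are unaffected by which module structure we use. Therefore, for a fixed colouring $\chi$, there is a nontrivial monochromatic solution of $(\varphi_*\mathbf{A})\mathbf{m}=0$ in $M$ if and only if there is a nontrivial monochromatic solution of $\mathbf{A}\mathbf{m}=0$ in $\varphi^*M$.

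Quantifying over all colourings $\chi\colon M\to\{1,\dots,r\}$ then yields the equivalence in the statement: $\varphi_*\mathbf{A}$ is partition regular over $M$ for $r$ colours if and only if $\mathbf{A}$ is partition regular over $\varphi^*M$ for $r$ colours. I do not anticipate any real obstacle here; the lemma is a bookkeeping statement whose entire content is that restriction of scalars is the identity on underlying sets and is compatible with the matrix action, so the proof is essentially a one-line observation once the definitions are spelled out. (If one wishes, one can note that letting $r$ vary also gives the unqualified form of the statement, but the lemma as stated is the $r$-coloured version.)
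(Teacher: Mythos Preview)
Your proof is correct and is precisely the unwinding of definitions that the paper has in mind; the paper's own proof consists of the single word ``Obvious.'' You have simply made explicit the tautology that restriction of scalars does not change the underlying set, so colourings, nontriviality, and the equation $\mathbf{A}\mathbf{m}=0$ all coincide on both sides.
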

\begin{proof} Obvious. \end{proof}

The next result is a variant of the usual  finiteness property of partition regularity. The proof uses a rather standard compactness argument.

\begin{proposition}\label{prop:compactness}
Let $M$ be an $R$-module, $\mathbf{A}$ a matrix with entries in $R$, and $r\geq 1$ an integer. If $\mathbf{A}$ is partition regular over $M$ for $r$ colours, then there exists a finite subset $F$ of $M$ such that for every colouring of $F$ with $r$ colours there exists a nontrivial monochromatic vector $\mathbf{m}$ with entries in $F$ such that  $\mathbf{A}\mathbf{m} =\mathbf{0}$.
\end{proposition}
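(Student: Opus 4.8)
The plan is to argue by contradiction via a Tychonoff compactness argument on the space of all $r$-colourings of $M$. Suppose that no finite subset $F\subseteq M$ has the stated property. Then for every finite $F\subseteq M$ we may fix a ``bad'' colouring $c_F\colon F\to\{1,\dots,r\}$ which admits no nontrivial monochromatic solution $\mathbf m\in F^l$ of $\mathbf A\mathbf m=0$. The goal is to glue these local colourings into a single colouring of all of $M$ that witnesses the failure of partition regularity, contradicting the hypothesis.

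To carry this out, I would equip $X=\{1,\dots,r\}^M$ with the product topology, each factor being finite and discrete, so that $X$ is compact by Tychonoff's theorem. For each finite $F\subseteq M$ let $Y_F\subseteq X$ be the set of colourings $c$ of $M$ whose restriction $c|_F$ admits no nontrivial monochromatic solution of $\mathbf A\mathbf m=0$ with all entries in $F$. Each $Y_F$ depends only on the finitely many coordinates indexed by $F$, hence is clopen, in particular closed. First I would verify the finite intersection property: given finite sets $F_1,\dots,F_n$, put $F=F_1\cup\dots\cup F_n$ and extend $c_F$ arbitrarily to an element $\tilde c_F\in X$; since a nontrivial monochromatic solution with entries in some $F_i$ would a fortiori be one with entries in $F$, the choice of $c_F$ forces $\tilde c_F\in\bigcap_{i=1}^n Y_{F_i}$. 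By compactness $\bigcap_{F}Y_F\neq\emptyset$, so we may pick $c$ in this intersection.

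It then remains to observe that $c\colon M\to\{1,\dots,r\}$ has no nontrivial monochromatic solution of $\mathbf A\mathbf m=0$ over $M$ whatsoever: any such solution $\mathbf m=(m_1,\dots,m_l)^\intercal$ has all entries in the finite set $F=\{m_1,\dots,m_l\}$, contradicting $c\in Y_F$. This contradicts the assumption that $\mathbf A$ is partition regular over $M$ for $r$ colours, and the proposition follows.

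I do not expect any genuine obstacle here; this is the routine compactness packaging of partition regularity. The only points that require a little care are: setting up the contradiction in the right direction, checking that $Y_F$ is closed so that compactness of $X$ applies, and the elementary remark that being a nontrivial monochromatic solution is a condition on only finitely many elements of $M$ — which is precisely what lets the finite subsets $F$ detect, in the limit, every solution over $M$. (If one prefers, Tychonoff can be replaced by König's lemma after first reducing to the case of countable $M$, but since $M$ may be uncountable the product-space formulation is cleaner.)
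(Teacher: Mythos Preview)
Your argument is correct and is essentially the same Tychonoff compactness argument as in the paper: both work in $\{1,\dots,r\}^M$, define the closed sets of colourings with no nontrivial monochromatic solution inside a given finite $F$, verify the finite intersection property under the negated hypothesis, and extract a global bad colouring contradicting partition regularity. Your write-up is slightly more explicit about why the sets are closed and how the finite intersection property is witnessed, but the approach is identical.
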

\begin{proof}
Let $C=\{1,\dots, r\}^M$ be the space of all colourings of $M$ with $r$ colours considered as a topological space with product topology, using the discrete topology on the set $\{1,\dots, r\}$. For a finite set $F\subset M$, denote by $C_F$ the set of all colourings in $C$ that do not admit nontrivial monochromatic  solutions to the equation $\mathbf{A}\mathbf{m} =\mathbf{0}$ with entries in $F$. We will prove that $C_F=\emptyset$ for some $F$. Suppose the contrary. The sets $C_F$ are then closed and nonempty, and the family $\{C_F\}$ is closed under finite intersections. By compactness of $C$, the set $\bigcap C_F$ is nonempty, the intersection being taken over all the finite subsets of $M$. Any element of $\bigcap C_F$ is a colouring of $M$ that does not admit any nontrivial monochromatic solution  to the equation $\mathbf{A}\mathbf{m} =\mathbf{0}$ with entries  in any finite set $F\subset M$, hence neither in all of $M$. This gives a contradiction. 
\end{proof}

We will mainly use Proposition \ref{prop:compactness} in the following form.

\begin{corollary}\label{cor:compactness} Let $M$ be an $R$-module, $\mathbf{A}$ a matrix with entries in $R$, and $r\geq 1$ an integer.  \begin{enumerate} \item\label{cor:compactness1} If $\mathbf{A}$ is partition regular over $M$ for $r$ colours, then it is partition regular for $r$ colours over some finitely generated submodule of $M$.
\item \label{cor:compactness2} If $\mathbf{A}$ is partition regular over $M$, then it is partition regular over some countably generated submodule of $M$.\end{enumerate}\end{corollary}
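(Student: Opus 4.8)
We want to prove Corollary \ref{cor:compactness}, which has two parts. Part (i): if $\mathbf{A}$ is partition regular over $M$ for $r$ colours, then it is partition regular for $r$ colours over some finitely generated submodule. Part (ii): if $\mathbf{A}$ is partition regular over $M$ (for any finite number of colours), then it is partition regular over some countably generated submodule.

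For part (i), the plan is to invoke Proposition \ref{prop:compactness} directly. That proposition hands us a finite subset $F \subseteq M$ such that every $r$-colouring of $F$ admits a nontrivial monochromatic solution $\mathbf{m}$ with entries in $F$. Now let $N$ be the submodule of $M$ generated by $F$; this is a finitely generated submodule. I claim $\mathbf{A}$ is partition regular over $N$ for $r$ colours: given any $r$-colouring $\chi$ of $N$, restrict it to $F \subseteq N$, apply the defining property of $F$ to get a nontrivial monochromatic solution with entries in $F \subseteq N$, and we are done. This is essentially immediate once Proposition \ref{prop:compactness} is in hand.

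For part (ii), the idea is to iterate part (i). Since $\mathbf{A}$ is partition regular over $M$, it is partition regular over $M$ for every $r \geq 1$. Apply part (i) with $r = 1, 2, 3, \dots$ successively: actually, the cleaner approach is a chain construction. By part (i) applied with $r$ colours, there is a finitely generated submodule $N_r$ over which $\mathbf{A}$ is partition regular for $r$ colours. Set $N = \sum_{r \geq 1} N_r$, the submodule generated by the union of a chosen finite generating set of each $N_r$; this is countably generated (a countable union of finite sets generates it). Since $N_r \subseteq N$ for every $r$ and partition regularity over a submodule passes up to the containing module (the remark just before the displayed Lemma on homomorphisms), $\mathbf{A}$ is partition regular over $N$ for $r$ colours, for every $r \geq 1$. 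Hence $\mathbf{A}$ is partition regular over $N$.

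**Main obstacle.** There is essentially no obstacle here: both parts are routine consequences of Proposition \ref{prop:compactness} together with the observation that partition regularity over a submodule implies partition regularity over the ambient module. The only point requiring a modicum of care is the bookkeeping in part (ii) — one must note that a countable union of finite generating sets produces a countably generated (not necessarily finitely generated) submodule, and that "partition regular for $r$ colours for all $r$" is exactly the definition of "partition regular". I would keep the write-up to a few sentences.
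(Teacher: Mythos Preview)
Your proof is correct and matches the paper's approach exactly: part (i) is obtained by taking the submodule generated by the finite set $F$ from Proposition~\ref{prop:compactness}, and part (ii) follows from (i) by taking the (countably generated) sum of the finitely generated submodules obtained for each $r$. The paper's own write-up is even terser (it just says ``Property (ii) follows from (i)''), but your expanded version of (ii) is precisely what is meant.
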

\begin{proof} For the proof of \eqref{cor:compactness1}, take the submodule generated by a finite set $F$ given by Proposition \ref{prop:compactness}. Property \eqref{cor:compactness2} follows from \eqref{cor:compactness1}. 
\end{proof}

\begin{proposition}\label{prop:PR_under_localisation}  Let $M$ be an $R$-module, $\mathbf{A}$ a matrix with entries in $R$, and $r\geq 1$ an integer.
\begin{enumerate}
\item\label{prop:PR_under_localisation1}  Let $S$ be a multiplicative subset of $R$. Assume that $S$ does not contain zero divisors on $M$. Then $\mathbf{A}$ is partition regular over $M$ for $r$ colours if and only if it is partition regular over $S^{-1}M$ for $r$ colours.
\item\label{prop:PR_under_localisation2}  Assume that $R$ is an integral domain with fraction field $K$. Then $\mathbf{A}$ is partition regular over $R$ for $r$ colours if and only if it is partition regular over $K$ for $r$ colours.
\end{enumerate}
\end{proposition}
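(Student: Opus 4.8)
The plan is to realise $M$ as an $R$-submodule of $S^{-1}M$ and then transport colourings between the two by multiplying through by a single element of $S$ that clears all the relevant denominators. First I would observe that the hypothesis on $S$ is precisely the assertion that the canonical map $\iota\colon M\to S^{-1}M$ is injective: its kernel consists of the $m\in M$ annihilated by some element of $S$, and by assumption no element of $S$ is a zero divisor on $M$. Viewing $S^{-1}M$ as an $R$-module by restriction of scalars along $R\to S^{-1}R$, the map $\iota$ then identifies $M$ with an $R$-submodule of $S^{-1}M$, so one direction of part \eqref{prop:PR_under_localisation1} is immediate from the observation made earlier in this section: if $\mathbf A$ is partition regular over the submodule $M$ for $r$ colours, it is partition regular over $S^{-1}M$ for $r$ colours.

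For the converse, suppose $\mathbf A$ is partition regular over $S^{-1}M$ for $r$ colours and fix a colouring $\chi\colon M\to\{1,\dots,r\}$. The key move is to apply Proposition \ref{prop:compactness} to the $R$-module $S^{-1}M$, obtaining a finite set $F\subset S^{-1}M$ such that every $r$-colouring of $F$ admits a nontrivial monochromatic solution of $\mathbf A\mathbf m=0$ with entries in $F$. Writing the elements of $F$ as $m_j/s_j$ with $m_j\in M$, $s_j\in S$, and letting $s\in S$ be the product of the finitely many denominators $s_j$, one has $sF\subset\iota(M)$; since $s/1$ is a unit in $S^{-1}R$, multiplication by $s$ is a bijection of $S^{-1}M$, so $y\mapsto sy$ is an injection of $F$ into $\iota(M)\cong M$. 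Pulling $\chi$ back along this injection gives a colouring of $F$; the resulting nontrivial monochromatic solution $\mathbf x$ with entries in $F$ yields $s\mathbf x$, whose entries lie in $M$, which satisfies $\mathbf A(s\mathbf x)=s\mathbf A\mathbf x=0$ (an identity that holds in $M$ by injectivity of $\iota$), which is nontrivial because multiplication by $s$ is injective, and which is $\chi$-monochromatic by construction. Finally, part \eqref{prop:PR_under_localisation2} is just the special case $M=R$, $S=R\setminus\{0\}$: this $S$ is multiplicatively closed and contains no zero divisor on $R$ precisely because $R$ is a domain, and $S^{-1}R=K$.

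The one place I would expect to have to be careful is this converse direction. The naive attempt --- colour $m/s\in S^{-1}M$ by the colour of $m$ --- is ill-defined (the representation is not unique) and in any case unusable, because a monochromatic solution in $S^{-1}M$ typically involves several elements with different denominators, and clearing them requires one common multiplier $s$, which does not preserve colours. Passing first to a finite set $F$ via compactness removes this obstruction: only finitely many denominators occur, a single $s\in S$ clears them all at once, and the resulting scaling $y\mapsto sy$ preserves the equation while, after the pullback, preserving the colour. It is then worth double-checking that nontriviality survives the scaling, which is exactly where the assumption that $S$ contains no zero divisor on $M$ --- equivalently, that multiplication by $s$ is bijective on $S^{-1}M$ --- is used.
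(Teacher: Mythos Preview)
Your proof is correct and follows essentially the same approach as the paper: both arguments use compactness to reduce to a finite configuration in $S^{-1}M$, then clear the finitely many denominators by multiplying by a single $s\in S$ to land back in $M$. The only cosmetic difference is that the paper invokes Corollary~\ref{cor:compactness}\eqref{cor:compactness1} to obtain a finitely generated submodule $N\subset S^{-1}M$ and observes that $n\mapsto sn$ embeds $N$ into $M$ (so partition regularity for $r$ colours transfers wholesale), whereas you work directly with the finite set $F$ from Proposition~\ref{prop:compactness} and a fixed colouring $\chi$; the underlying idea is identical.
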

\begin{proof} 
For the proof of \eqref{prop:PR_under_localisation1}, assume that $\mathbf{A}$ is partition regular over $M$ for $r$ colours. Since $S$ does not contain zero divisors on $M$, the canonical map $M\rightarrow S^{-1}M$ is injective and  $\mathbf{A}$ is partition regular over $S^{-1}M$ for $r$ colours. For the opposite implication, assume that $\mathbf{A}$ is partition regular over $S^{-1}M$ for $r$ colours. By Corollary  \ref{cor:compactness} there exists a finitely generated $R$-submodule $N$ of $S^{-1}M$ such that $\mathbf{A}$ is partition regular over $N$ for $r$ colours. Choosing a finite set $\{m_1/s_1,\dots, m_t/s_t\}$ of generators of $N$, we see that $N$ is isomorphic with a submodule of $M$ via the map  $ n\mapsto s_1\cdots s_t n$. Hence $\mathbf{A}$ is partition regular over $M$ for $r$ colours.

Property \eqref{prop:PR_under_localisation2} follows immediately from \eqref{prop:PR_under_localisation1}.
\end{proof}

We end this section with a property that allows us to perform d\'evissage arguments for partition regularity.

\begin{proposition}\label{prop:PR_of_quotients} Let $M$ be an $R$-module,  $N$  its submodule, $\mathbf{A}$ a matrix with entries in $R$, and $r,s\geq 1$ integers.
\begin{enumerate}
 \item\label{prop:PR_of_quotients1} If $\mathbf{A}$ is partition regular over $M$ for $r+s$ colours, then either $\mathbf{A}$ is partition regular over $N$ for $r$ colours or $\mathbf{A}$ is partition regular over $M/N$ for $s$ colours.
\item\label{prop:PR_of_quotients2} If  $M=\bigoplus_{i=1}^t M_i$ is a direct sum of finitely many $R$-modules $M_i$, then $\mathbf{A}$ is partition regular over $M$ if and only if $\mathbf{A}$ is partition regular over some $M_i$. 
\end{enumerate}
\end{proposition}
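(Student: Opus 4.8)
The plan is to prove \eqref{prop:PR_of_quotients1} by contraposition, by exhibiting an explicit colouring of $M$ assembled from colourings of $N$ and of $M/N$ on \emph{disjoint} palettes, and then to deduce \eqref{prop:PR_of_quotients2} from it by induction on $t$.

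For \eqref{prop:PR_of_quotients1} I would assume that $\mathbf{A}$ is not partition regular over $N$ for $r$ colours and not partition regular over $M/N$ for $s$ colours, and fix witnessing colourings $\chi_N\colon N\to\{1,\dots,r\}$ and $\bar\chi\colon M/N\to\{r+1,\dots,r+s\}$ (note the disjoint colour sets). Writing $\pi\colon M\to M/N$ for the quotient map, define $\chi\colon M\to\{1,\dots,r+s\}$ by $\chi(m)=\chi_N(m)$ if $m\in N$ and $\chi(m)=\bar\chi(\pi(m))$ otherwise; I claim this colouring has no nontrivial monochromatic solution, which contradicts partition regularity over $M$ for $r+s$ colours. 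The verification is short: if $\mathbf{m}=(m_1,\dots,m_l)^\intercal$ is a monochromatic solution of colour $c\leq r$, then every $m_j$ has colour $\leq r$, which by construction forces $m_j\in N$ with $\chi_N(m_j)=c$; hence $\mathbf{m}$ is a $\chi_N$-monochromatic solution inside $N$, so it is trivial. If instead $c>r$, then every $m_j$ has colour $>r$, which forces $m_j\notin N$ and $\bar\chi(\pi(m_j))=c$; applying $\pi$ gives a $\bar\chi$-monochromatic solution $\pi(\mathbf{m})$ of $\mathbf{A}\mathbf{m}=0$ over $M/N$ with $\pi(m_j)\neq 0$ for all $j$, contradicting triviality. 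The one point that makes this argument work, and the place where care is needed, is the use of disjoint palettes: it is exactly what forces the entries of a monochromatic solution to lie either all inside $N$ or all strictly outside $N$.

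For \eqref{prop:PR_of_quotients2}, the ``if'' direction is immediate, since each $M_i$ embeds as a submodule of $M$ and partition regularity passes from a submodule to the ambient module. For the ``only if'' direction I would induct on $t$, the case $t=1$ being trivial. For $t\geq 2$, write $M=M_1\oplus M'$ with $M'=\bigoplus_{i=2}^t M_i$ and note $M/M_1\cong M'$ as $R$-modules, so partition regularity over $M/M_1$ and over $M'$ are equivalent. If $\mathbf{A}$ were partition regular over $M$ but over no $M_i$, then it is not partition regular over $M_1$, hence not for some number $r$ of colours, and, by the inductive hypothesis applied to $M'$, it is not partition regular over $M'$, hence not for some number $s$ of colours; since $\mathbf{A}$ is partition regular over $M$ for $r+s$ colours, part \eqref{prop:PR_of_quotients1} forces partition regularity over $M_1$ for $r$ colours or over $M/M_1\cong M'$ for $s$ colours, contradicting both choices. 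The only subtlety here is to unwind ``not partition regular'' as ``not partition regular for some finite number of colours'' so that the numbers $r$ and $s$ can be extracted and fed into \eqref{prop:PR_of_quotients1}; beyond that the argument is routine.
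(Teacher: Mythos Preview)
Your argument is correct and follows essentially the same route as the paper: the same disjoint-palette colouring of $M$ built from colourings of $N$ and $M/N$ for part~\eqref{prop:PR_of_quotients1}, and a reduction of part~\eqref{prop:PR_of_quotients2} to part~\eqref{prop:PR_of_quotients1} (which the paper leaves as ``follows immediately'' and you spell out by induction on~$t$). The only difference is that you provide the verification details that the paper omits.
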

\begin{proof}
For the proof of \eqref{prop:PR_of_quotients1}, suppose that there exist a colouring $\chi_N \colon N \to \{1,\dots,r\}$ of $N$ and a colouring $\chi_{M/N} \colon M/N \to \{1,\dots,s\}$ of $M/N$, both not admitting any nontrivial monochromatic solutions to the equation $\mathbf{A}\mathbf{m}=\mathbf{0}$ in $N$ (resp., in $M/N$). Denote by $\bar{m}$ the image of $m\in M$ in $M/N$. Consider the colouring $\chi \colon M \to \{1,\dots, r+s\}$ given by $$ \chi(m)=\begin{cases} 
\chi_N(m) \quad \text{if} \quad m \in N,\\
r+\chi_{M/N}(\bar{m}) \quad \text{if} \quad m \notin N. \end{cases}$$ It is then easy to see that the colouring $\chi$ does not admit any nontrivial monochromatic solutions to the equation $\mathbf{A}\mathbf{m}=\mathbf{0}$ in $M$.

Property \eqref{prop:PR_of_quotients2} follows immediately from \eqref{prop:PR_of_quotients1}.
\end{proof}

\section{Partition regularity over modules}\label{sec:mod}

In this section we characterise partition regularity for finitely generated modules over noetherian rings. We use the notion of an associated prime. We recall that a prime ideal $\mathfrak{p}$ of $R$ is an associated prime of an $R$-module $M$ if there exists an injective $R$-module homomorphism $R/{\mathfrak p} \hookrightarrow M$; equivalently, there exists $m\in M$ with $\mathfrak{p}=\mathrm{ann}(m)$. If $M$ is a finitely generated module over a noetherian ring $R$, then the set $\Ass\, M$ of associated prime ideals of $M$ is finite (see \cite[Theorem 3.10]{book:Eisenbud}). We say that a submodule $N$ of $M$ is $\mathfrak{p}$-primary if $\Ass\, M{/}N=\{\mathfrak{p}\}$.

\begin{theorem}\label{mainthm:modules}
Let $M$ be a finitely generated module over a noetherian ring $R$ and let
$\mathbf{A}$ be a matrix with entries in $R$. The following conditions are equivalent: \begin{enumerate} \item The matrix $\mathbf A$ is partition regular over $M$. \item There exists an associated prime $\mathfrak{p}$ of $M$ such that $\mathbf{A}$ is partition regular over $R/\mathfrak{p}$. \end{enumerate}
\end{theorem}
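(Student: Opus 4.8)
The plan is to treat the two implications separately, with essentially all of the substance in $(1)\Rightarrow(2)$. The implication $(2)\Rightarrow(1)$ is immediate: an associated prime $\mathfrak p$ of $M$ comes with an injection $R/\mathfrak p\hookrightarrow M$ of $R$-modules, partition regularity of $\mathbf A$ over a submodule implies partition regularity over the ambient module (for a fixed number of colours, hence for every number), and partition regularity of $\mathbf A$ over the $R$-module $R/\mathfrak p$ is the same as partition regularity of the reduced matrix over the ring $R/\mathfrak p$ by the transfer of partition regularity along ring homomorphisms established in Section~\ref{sec:bn}.

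For $(1)\Rightarrow(2)$ I would first upgrade Proposition~\ref{prop:PR_of_quotients}(i) to an all-colours dichotomy: if $\mathbf A$ is partition regular over $M$ and $N\subseteq M$ is a submodule, then $\mathbf A$ is partition regular over $N$ or over $M/N$ — otherwise one could pick $r$ and $s$ colours witnessing failure over $N$ and over $M/N$ and feed them to Proposition~\ref{prop:PR_of_quotients}(i) to contradict partition regularity over $M$ for $r+s$ colours. Next I would invoke the Lasker--Noether theorem to write the zero submodule of $M$ as an irredundant primary decomposition $0=N_1\cap\dots\cap N_t$, with each $M/N_i$ coprimary for the associated prime $\mathfrak p_i$ and $\{\mathfrak p_1,\dots,\mathfrak p_t\}=\Ass M$. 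Since the canonical map $M\hookrightarrow\bigoplus_{i=1}^t M/N_i$ is injective, $\mathbf A$ is partition regular over $\bigoplus_i M/N_i$, and Proposition~\ref{prop:PR_of_quotients}(ii) shows it is partition regular over some $L:=M/N_{i_0}$, a finitely generated $\mathfrak p$-coprimary module with $\mathfrak p:=\mathfrak p_{i_0}\in\Ass M$. It remains to deduce partition regularity of $\mathbf A$ over $R/\mathfrak p$.

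To this end I would localise at $\mathfrak p$. Since $\Ass L=\{\mathfrak p\}$, the elements of $R\setminus\mathfrak p$ are nonzerodivisors on $L$, so Proposition~\ref{prop:PR_under_localisation}(i), together with transfer along $R\to R_{\mathfrak p}$, reduces us to the noetherian local ring $(R_{\mathfrak p},\mathfrak m')$ and the nonzero finitely generated module $L_{\mathfrak p}$, which satisfies $(\mathfrak m')^nL_{\mathfrak p}=0$ for some $n$ because $L$ is finitely generated and $\mathfrak p=\sqrt{\ann L}$. Filtering $L_{\mathfrak p}\supseteq\mathfrak m'L_{\mathfrak p}\supseteq\dots\supseteq(\mathfrak m')^nL_{\mathfrak p}=0$, each successive quotient is a finitely generated module killed by $\mathfrak m'$, hence a finite-dimensional vector space over the residue field $\kappa(\mathfrak p)$. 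Applying the dichotomy above repeatedly along this filtration — using that $\mathbf A$ is never partition regular over the zero module, so the process cannot keep descending into submodules — I obtain that $\mathbf A$ is partition regular over one such quotient, hence over $\kappa(\mathfrak p)^{\,d}$ for some $d\geq 1$, hence over $\kappa(\mathfrak p)$ by Proposition~\ref{prop:PR_of_quotients}(ii). Finally $\kappa(\mathfrak p)$ is the fraction field of the domain $R/\mathfrak p$, so Proposition~\ref{prop:PR_under_localisation}(ii), together with the homomorphism transfer to move the matrix among $R$, $R/\mathfrak p$ and $\kappa(\mathfrak p)$, gives partition regularity of $\mathbf A$ over $R/\mathfrak p$, as required.

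The step that really needs care — and the reason a naive d\'evissage on $M$ does not work — is the reduction to a \emph{single associated} prime. A prime filtration of $M$ in general involves embedded primes $\mathfrak q$ that are not associated to $M$, and partition regularity over $R/\mathfrak q$ does not descend to $R/\mathfrak p$ for a smaller prime $\mathfrak p\subsetneq\mathfrak q$ (already the $1\times 1$ matrix $[p]$ is partition regular over $\Z/p\Z$ but not over $\Z$). Primary decomposition is precisely what forces us onto a genuine associated prime $\mathfrak p$, and localising at $\mathfrak p$ is precisely what collapses the coprimary module to finite length over $R_{\mathfrak p}$, so that all of its filtration quotients are vector spaces over the one field $\kappa(\mathfrak p)$; with those two ideas in place the remaining manipulations are routine applications of the functoriality results of Section~\ref{sec:bn}.
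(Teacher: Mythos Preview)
Your proof is correct and follows essentially the same route as the paper: primary decomposition to land in a $\mathfrak p$-coprimary quotient $M/Q_i$, localisation at $\mathfrak p$ (using that $R\setminus\mathfrak p$ consists of nonzerodivisors on a coprimary module), the $\mathfrak m$-adic filtration of the resulting finite-length $R_{\mathfrak p}$-module, and finally passage from $\kappa(\mathfrak p)$ back to $R/\mathfrak p$ via Proposition~\ref{prop:PR_under_localisation}\eqref{prop:PR_under_localisation2}. The only cosmetic difference is that the paper refines the filtration so that every quotient is a single copy of $\kappa(\mathfrak p)$, whereas you stop at $\kappa(\mathfrak p)^d$ and invoke Proposition~\ref{prop:PR_of_quotients}\eqref{prop:PR_of_quotients2} once more; your explicit articulation of the ``all-colours dichotomy'' and of why a naive prime filtration fails is a nice addition.
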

\begin{proof}
If $\mathfrak{p}$ is an associated prime of $M$ such that $\mathbf A$ is partition regular over $R/\mathfrak{p}$, then $R/\mathfrak{p}$ embeds into $M$ and hence $\mathbf A$ is partition regular over $M$.

Assume now that $\mathbf A$ is partition regular over $M$ and let $\mathfrak{p}_1,\dots, \mathfrak{p}_t$ be the associated primes of $M$. By primary decomposition (see \cite[Theorem 3.10]{book:Eisenbud}), there exist $\mathfrak{p}_i$-primary submodules $Q_i$ of $M$ 
 such that $\bigcap_{i=1}^{t} Q_{i}=0 $. Hence $M$ embeds via the diagonal embedding into $\bigoplus_{i=1}^t M/Q_i$ and by Proposition \ref{prop:PR_of_quotients}.\eqref{prop:PR_of_quotients2}, $\mathbf A$ is partition regular over  $M/Q_i$ for some $i\in \{1,\dots, t\}$. All zero divisors of the $R$-module $M/Q_i$ are in $\mathfrak{p}_{i}$ (see \cite[Theorem 3.1]{book:Eisenbud}), and hence  by Proposition \ref{prop:PR_under_localisation}.\eqref{prop:PR_under_localisation1},  $\mathbf A$ is partition regular over the localised module  $(M/Q_i)_{\mathfrak{p}_{i}}$.

Since $\mathfrak{m}=\mathfrak{p}_{i}R_{\mathfrak{p}_{i}}$ is the only associated prime of $(M/Q_i)_{\mathfrak{p}_{i}}$, some power $\mathfrak{m}^h$ of $\mathfrak{m}$ annihilates $(M/Q_i)_{\mathfrak{p}_{i}}$ (see \cite[Proposition 3.9]{book:Eisenbud}) and  
\[
0=\mathfrak{m}^h(M/Q_i)_{\mathfrak{p}_{i}}\subset \mathfrak{m}^{h-1}(M/Q_i)_{\mathfrak{p}_{i}} \subset \dots \subset \mathfrak{m}(M/Q_i)_{\mathfrak{p}_{i}} \subset  (M/Q_i)_{\mathfrak{p}_{i}}
\]
is a finite filtration of 
$(M/Q_i)_{\mathfrak{p}_{i}}$. Every quotient $\mathfrak{m}^i(M/Q_i)_{\mathfrak{p}_{i}}/\mathfrak{m}^{i+1}(M/Q_i)_{\mathfrak{p}_{i}}$  is a finitely dimensional vector space over the field $R_{\mathfrak{p}_{i}}/\mathfrak{p}_{i}R_{\mathfrak{p}_{i}}$ and the above filtration can be refined so that all the quotients are isomorphic with the residue field $R_{\mathfrak{p}_{i}}/\mathfrak{p}_{i}R_{\mathfrak{p}_{i}}$. By repeated use of Proposition \ref{prop:PR_of_quotients}, we get that  $\mathbf A$ is partition regular over   $R_{\mathfrak{p}_{i}}/\mathfrak{p}_{i}R_{\mathfrak{p}_{i}}$. Since $R_{\mathfrak{p}_{i}}/\mathfrak{p}_{i}R_{\mathfrak{p}_{i}}$ is the fraction field of  $R/\mathfrak{p}_i$, it follows from Proposition \ref{prop:PR_under_localisation}.\eqref{prop:PR_under_localisation2} that $\mathbf A$ is partition regular over   $R/\mathfrak{p}_i$.
\end{proof}

\section{Partition regularity over integral domains and $\mathfrak m$-colourings}\label{sec:dom}

The aim of this section is to study  partition regularity over integral domains $R$. In this case the columns condition (Definition \ref{def:cc}) and the generalised columns condition  (Definition \ref{def:gcc})  coincide as long as the integral domain $R$ is infinite. If $R$ is finite (meaning that $R$ is a finite field), the generalised columns condition is more restrictive and says that the sum of all the columns is zero.

We begin with a simple lemma saying that the columns condition does not depend on the base ring, in the following sense.
\begin{lemma}\label{lem:cc_in_fields} Let $R\subset S$ be integral domains and let $\mathbf{A}$ be a  matrix with entries in $R$. Then $\mathbf{A}$ satisfies the columns condition as a matrix with entries in $R$ if and only if it satisfies the columns condition as a matrix with entries in $S$.\end{lemma}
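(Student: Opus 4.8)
The plan is to observe that the combinatorial datum in the columns condition — the integer $m\geq 0$ and the partition $\{1,\dots,l\}=I_0\cup I_1\cup\dots\cup I_m$ — makes sense independently of the coefficient ring, and that for a fixed such partition the two incarnations of the conditions in Definition \ref{def:cc} coincide. Write $K$ and $L$ for the fraction fields of $R$ and $S$; since $R\subset S$ are domains, the inclusion extends to an injection $K\hookrightarrow L$, and we regard $K$ as a subfield of $L$. The columns $\mathbf{c}_1,\dots,\mathbf{c}_l$ of $\mathbf{A}$ lie in $R^k\subseteq K^k\subseteq L^k$, and so do the vectors $\mathbf{s}_t:=\sum_{i\in I_t}\mathbf{c}_i$. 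The condition $\sum_{i\in I_0}\mathbf{c}_i=0$ is an identity in $R^k$ and is manifestly unaffected by enlarging the coefficient ring. Hence it suffices to show, for each $t\in\{1,\dots,m\}$, that $\mathbf{s}_t$ lies in the $K$-linear span of $\{\mathbf{c}_j:j\in I_0\cup\dots\cup I_{t-1}\}$ if and only if it lies in the $L$-linear span of the same finite set.

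This last point is an instance of the general fact that consistency of a system of linear equations over a field is insensitive to field extension. Fix a finite family $\mathbf{w}_1,\dots,\mathbf{w}_r\in K^k$ and a vector $\mathbf{v}\in K^k$; in our application $\mathbf{v}=\mathbf{s}_t$ and the $\mathbf{w}_j$ are the columns $\mathbf{c}_j$ with $j\in I_0\cup\dots\cup I_{t-1}$ (if $t=1$ and $I_0=\emptyset$ the span is $\{0\}$ over both fields and there is nothing to prove). Then $\mathbf{v}$ lies in the $K$-span of the $\mathbf{w}_j$ precisely when the system $\sum_j x_j\mathbf{w}_j=\mathbf{v}$ has a solution $(x_1,\dots,x_r)\in K^r$, which happens if and only if the $k\times r$ matrix $W=[\mathbf{w}_1|\dots|\mathbf{w}_r]$ and the augmented matrix $[W\,|\,\mathbf{v}]$ have the same rank. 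The rank of a matrix with entries in $K$ is computed by Gaussian elimination using only the field operations of $K$, so it is unchanged when the matrix is viewed as having entries in $L$; therefore $\operatorname{rank}W=\operatorname{rank}[W\,|\,\mathbf{v}]$ holds over $K$ if and only if it holds over $L$, that is, $\mathbf{v}$ is in the $K$-span of the $\mathbf{w}_j$ if and only if it is in their $L$-span. (Alternatively, extend a $K$-basis of the span to a basis of $K^k$: then the span tensored up to $L$ meets $K^k$ exactly in the original span, by freeness of $L$ over $K$.)

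Combining these observations, a partition together with witnesses for the columns condition of $\mathbf{A}$ over $R$ is, verbatim, a set of witnesses over $S$, and vice versa; this proves the equivalence. Since every step is either a tautology about the shared combinatorial data or the elementary stability of matrix rank under field extension, there is no serious obstacle; the only point requiring mild care is the bookkeeping of the degenerate cases ($m=0$, or $I_0\cup\dots\cup I_{t-1}=\emptyset$), where the relevant spans are $\{0\}$ over both fields and the conditions hold vacuously in tandem.
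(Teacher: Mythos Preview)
Your proof is correct and follows essentially the same approach as the paper: both reduce the question to the stability of linear-span membership (equivalently, solvability of a linear system) under field extension. The paper's write-up makes a small preliminary reduction to the case where $R$ and $S$ are themselves fields before invoking ``basic linear algebra'', whereas you pass directly to the fraction fields $K\subset L$ and spell out the rank argument; this is only a presentational difference. One minor remark: in Definition~\ref{def:cc} the $I_t$ form a partition, so $I_0$ is nonempty and the ``degenerate case'' $I_0\cup\dots\cup I_{t-1}=\emptyset$ that you mention does not actually arise.
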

\begin{proof}
It is immediate that if the columns condition holds for $R$, then it also holds for $S$, and that the converse holds if $S$ is the fraction field of $R$. Thus we may assume that $R$ and $S$ are both fields and that the columns condition holds over $S$. In this case, the columns condition means that a certain system of linear equations with coefficients in $R$ has a nontrivial solution in $S$. It then follows from basic linear algebra that this system  also has a nontrivial solution in $R$. 
\end{proof}

We will generalise the construction of the colourings $c_p$, which play a crucial role in the proof of Rado's theorem. Let $p$ be a prime number. Recall that the colouring $c_p \colon \Z \to \{0,\dots,p-1\}$ is given by the formula $c_p(0)=0$ and $c_p(n) = j$ if $n\neq 0$ is of the form $n=p^k (pm+j)$ for some integers $k\geq 0$, $m\in \Z$, and $ j\in \{1,\dots,p-1\}$.

Let $R$ be a local ring and let $\mathfrak m$ be its maximal ideal. Recall that the (Krull) dimension of a ring $R$, denoted by $\dim R$, is  the supremum of the lengths $r$ of chains of prime ideals \[\mathfrak{p}_0\subsetneq \dots \subsetneq \mathfrak{p}_{r-1} \subsetneq \mathfrak{p}_r=\mathfrak{m}\] in $R$. If $R$ is a local noetherian ring,  Krull's theorem states that $\mathfrak m$ cannot be generated by fewer than $\dim R$ elements (see \cite[Corollary 10.7]{book:Eisenbud}); $R$ is called a regular local ring if it is noetherian and $\mathfrak m$ can be generated by exactly $\dim R$ elements.  Every regular local ring is an integral domain (see \cite[Corollary 10.14]{book:Eisenbud}). A regular local ring of dimension one is a discrete valuation ring (see \cite[11.1]{book:Eisenbud}). This means that for a fixed prime element $\pi$ of $R$ (which necessarily generates $\mathfrak m$), every nonzero $z$ in the fraction field $K$ of $R$ can be written uniquely as $z=u\pi^{n}$ for some unit $u \in R$ and $n\in\Z$. The induced group homomorphism $v\colon K^*\to \Z$, $v(z)=n$, is called the $\pi$-adic valuation; it does not depend on the choice of $\pi$. 

The local ring $\Z_{(p)}$, which is implicit in the definition of the $c_p$ colouring, is a discrete valuation ring, and the colouring takes values in the residue field $\Z_{(p)}/p\Z_{(p)} \cong \Z/(p)$. To generalise Rado's colourings to the case of an arbitrary finitely generated $\Z$-algebra $R$, we will need to consider regular local rings which need not be of dimension one. The role of a prime $p$ from Rado's $c_p$ colouring will be played by a maximal ideal $\mathfrak{m}$ of $R$ such that $R_{\mathfrak{m}}$ is a regular local ring and the colouring will take values in a (finite) residue field $R_{\mathfrak{m}}/\mathfrak{m}R_{\mathfrak{m}} \cong R/\mathfrak{m}$ of $R_{\mathfrak{m}}$. To define this colouring we will need to pass several times to appropriate quotients, and we will  use the standard fact that localisation commutes with taking quotients: For a ring $R$, multiplicatively closed set $S\subset R$ and ideal $I\subset R$, we have $S^{-1}R/I(S^{-1}R) \cong \overline{S}^{-1}(R/I)$, where $\overline{S}$ denotes the image of $S$ in $R/I$ (see \cite[Theorem 4.2]{Matsumura-CRT}).

Let  $R$ be  a finitely generated $\Z$-algebra. Let $\mathfrak m$ be a maximal ideal of $R$ such that $R_{\mathfrak m}$ is a regular local ring. We will now construct a finite colouring of the fraction field $K$ of $R_{\mathfrak m}.$

Choose generators $\pi_1,\dots,\pi_t$ of $\mathfrak m R_{\mathfrak m}$ with $t=\dim R_\mathfrak m$. Let
$$S_i =  R_\mathfrak m/(\pi_1,\dots,\pi_i)R_\mathfrak m\quad \text{ for } i\in\{0,\dots,t\}.$$
The rings $S_i$ are regular local rings and hence are integral domains. Let $K_i$ denote the fraction field of $S_i$ (note that $K=K_0$). We have $S_t \cong R_{\mathfrak m}/\mathfrak{m}R_{\mathfrak{m}} \cong R/{\mathfrak{m}}$. Since $\Z$ is a Jacobson ring (i.e.\ every prime ideal is an intersection of maximal ideals), we conclude from a general form of Nullstellensatz (see \cite[Theorem 4.19]{book:Eisenbud}) that $R/{\mathfrak m}$  is a finite field.

For $i\in\{0,\dots,t-1\}$, the element $\pi_{i+1}$ is a prime element of $S_i$, and hence the ring $(S_i)_{(\pi_{i+1})}$ is a discrete valuation ring with fraction field $K_i$. Consider the induced $\pi_{i+1}$-adic discrete valuation $v_{i+1} \colon K^*_i \to \Z$. Every element $z\in K^*_i$ can be written as $$z = \pi_{i+1}^{v_{i+1}(z)} z' \quad \text{ for some } z'\in ((S_i)_{(\pi_{i+1})})^*.$$ Note that the residue field of $(S_i)_{(\pi_{i+1})}$ is $$(S_i)_{(\pi_{i+1})}/ \pi_{i+1} (S_i)_{(\pi_{i+1})} \cong K_{i+1}.$$ 
 
We now construct a colouring $c_{\mathfrak m} \colon K \to R/{\mathfrak m}$. Let $x\in K$. If $x=0$, we put $c_{\mathfrak m} (x)=0$. If $x\neq 0$, we put $x_0 = x$ and we construct inductively the elements $x_1,\dots,x_t$ with $x_i \in K_{i}$  such that for $i\in\{0,\dots,t-1\}$ the element $x_{i+1} \in K_{i+1}^*$ is the image of $x_i \pi_{i+1}^{-v_{i+1}(x_i)}$ in the residue field of $(S_i)_{(\pi_{i+1})}$ under the isomorphism $(S_i)_{(\pi_{i+1})}/ \pi_{i+1} (S_i)_{(\pi_{i+1})} \cong K_{i+1}$. The element $x_t$ is a nonzero element of $K_t=R/{\mathfrak m}$. We put $c_{\mathfrak m} (x)=x_t$.

Note that the definition of the colouring $c_{\mathfrak m}$ depends not only on $\mathfrak m$, but also on the choice of generators $\pi_1,\dots,\pi_t$ of $\mathfrak m R_{\mathfrak m}$. By abuse of terminology, we refer to any such colouring  as an $\mathfrak m$-\emph{colouring}.

  \begin{remark} We briefly present an alternative description of the colouring $c_{\mathfrak m}$. Any nonzero element $x\in K$ can be (non-uniquely) written in the form 
$$x = \pi_1^{a_1+1} y_1 + \pi_1^{a_1} \pi_2^{a_2+1} y_2 +\dots + \pi_1^{a_1}\cdots \pi_{t-2}^{a_{t-2}} \pi_{t-1}^{a_{t-1}+1} y_{t-1}+ \pi_1^{a_1}\cdots \pi_{t-1}^{a_{t-1}} \pi_t^{a_t} y_{t}$$
 with $a_1,\dots,a_t \in \Z$, $y_i \in (R_{\mathfrak m})_{(\pi_1,\dots,\pi_i)}$ for $i\in\{1,\dots,t-1\}$, and $y_t\in R_{\mathfrak m}^*$. (The existence of such a representation is proved by an induction on $t$.)  Let $z$ denote the image of $y_t$ in $R_{\mathfrak m}/{\mathfrak m}R_{\mathfrak m} \cong R/{\mathfrak m}$. Then $c_{\mathfrak m}(x)=z$.
  \end{remark}

\begin{example}\mbox{ }

\begin{enumerate} 

\item Let $R=\Z$ and let $p$ be a prime number. The ring $R_{(p)}$ is a regular local ring (actually, a discrete valuation ring), and we recover Rado's colouring $c_p$ as an example of an $\mathfrak m$-colouring for $\mathfrak m =(p)$.
\item Let $R=\Z[x,y]$, let $p$ be a prime, and let $\mathfrak m = (p,x,y)$. For $f\in R$, write $$f=\sum_{(i,j)\in \N^2} f_{ij} x^i y^j.$$ The $\mathfrak m$-colouring associated to the choice of generators $\pi_1=x, \pi_2=y, \pi_3 =p$ is given by $$c_{\mathfrak m}(f) = c_p(f_{i_0 j_0}),$$ where $(i_0,j_0)$ is the lexicographically smallest element of $\N^2$ with $f_{i_0 j_0} \neq 0$.
\item In the previous example, take instead $\pi_1=p, \pi_2=x, \pi_3 =y$. Then $$c_{\mathfrak m}(f) = c_p(f_{i_1 j_1}),$$ where $(i_1,j_1)$ is the lexicographically smallest element of $\N^2$ among all the elements $f_{i_1 j_1}$  with minimal $p$-valuation.
\end{enumerate}
\end{example}

\begin{lemma}\label{oneequationdom}  Let  $R$ be  a finitely generated $\Z$-algebra, $\mathfrak m$ a maximal ideal of $R$ such that $R_{\mathfrak m}$ is a regular local ring, and $K$ the fraction field of $R_{\mathfrak m}$.  Let $a_1,\dots,a_l \in R$. If the equation $\sum_{i=1}^l a_i m_i = 0$ has a nontrivial monochromatic solution $(m_1,\dots,m_l)^{\intercal}\in K^l$ with respect to an ${\mathfrak m}$-colouring, then there exists a nonempty $I\subset \{1,\dots,l\}$ such that $$\sum_{i\in I} a_i \in \mathfrak m.$$\end{lemma}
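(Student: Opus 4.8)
The plan is to unwind the recursive definition of the $\mathfrak m$-colouring coordinate by coordinate and to propagate the equation $\sum a_i m_i = 0$ down the tower of residue fields $K = K_0 \to K_1 \to \dots \to K_t = R/\mathfrak m$, peeling off one prime element $\pi_{i+1}$ at each step. Suppose $(m_1,\dots,m_l)^\intercal \in K^l$ is a nontrivial monochromatic solution, all $m_i$ receiving colour $c \in R/\mathfrak m$. If $c = 0$ then every $m_i$ is $0$ and the solution is trivial, contradiction; so $c \neq 0$ and in particular every $m_i \neq 0$. I would set $x_0^{(i)} = m_i$ and, following the construction, let $x_{j}^{(i)} \in K_j^*$ be the $j$-th element in the inductive chain produced by the colouring applied to $m_i$, so that $x_t^{(i)} = c$ for every $i \in \{1,\dots,l\}$.

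First I would handle the passage from $K_j$ to $K_{j+1}$. Among the $l$ elements $x_j^{(i)} \in K_j$, let $e_j = \min_i v_{j+1}(x_j^{(i)})$ be the minimal $\pi_{j+1}$-adic valuation, and let $I_j = \{\, i : v_{j+1}(x_j^{(i)}) = e_j \,\}$ be the (nonempty) set of indices attaining it. The point is that the equation $\sum_{i} a_i x_j^{(i)} = 0$ — which holds for $j = 0$ by hypothesis — divided by $\pi_{j+1}^{e_j}$ lies in $(S_j)_{(\pi_{j+1})}$, and reducing modulo $\pi_{j+1}$ kills every term with $i \notin I_j$ while sending $x_j^{(i)} \pi_{j+1}^{-e_j}$ for $i \in I_j$ to a unit of $K_{j+1}$. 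By the construction of the colouring, $x_{j+1}^{(i)}$ for $i \in I_j$ is precisely (a unit multiple of) the image of $x_j^{(i)} \pi_{j+1}^{-v_{j+1}(x_j^{(i)})} = x_j^{(i)}\pi_{j+1}^{-e_j}$; more care is needed here, because the colouring normalises by each element's \emph{own} valuation, not the common minimum $e_j$. I would record this relation carefully: reducing $\pi_{j+1}^{-e_j}\sum_i a_i x_j^{(i)} = 0$ modulo $\pi_{j+1}$ gives a relation in $K_{j+1}$ of the form $\sum_{i \in I_j} \bar a_i\, u_i\, x_{j+1}^{(i)} = 0$, where $\bar a_i$ is the image of $a_i$ in $K_{j+1}$ and $u_i \in K_{j+1}^*$ comes from the discrepancy between $e_j$ and $v_{j+1}(x_j^{(i)})$; but on the monochromatic subset the $x_{j+1}^{(i)}$ are all equal to the same element, so this will combine into something usable at the bottom.

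Carrying this down through all $t$ steps, I would obtain a nested chain of nonempty index sets $I_{t-1} \cap \dots \subset \dots \subset I_0 \subset \{1,\dots,l\}$; more precisely, at each stage I restrict attention to indices still ``alive,'' so the final set $I = I_{t-1}$ (intersected appropriately with the earlier ones) is nonempty, and at the bottom level $K_t = R/\mathfrak m$ the relation reads $\sum_{i \in I} \bar a_i \cdot (\text{unit}) \cdot c = 0$ with $c \in (R/\mathfrak m)^*$. Cancelling $c$ and tracking that the unit factors are in fact trivial on the surviving set — this is the cleanest route — yields $\sum_{i \in I} \bar a_i = 0$ in $R/\mathfrak m$, i.e. $\sum_{i \in I} a_i \in \mathfrak m$, which is the claim. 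The main obstacle I anticipate is exactly the bookkeeping in the inductive step: making the relation between $x_{j+1}^{(i)}$ and the reduction of $x_j^{(i)}\pi_{j+1}^{-e_j}$ precise when the individual valuations $v_{j+1}(x_j^{(i)})$ differ across $i$, and confirming that the unit factors $u_i$ are either harmless or can be absorbed because, on each monochromatic index set that survives to the next level, the $x_{j+1}^{(i)}$ all coincide. Once the surviving index set at each stage is defined as those indices with minimal valuation \emph{and} agreeing with the monochromatic value at the next level, the units disappear and only $\sum \bar a_i = 0$ survives.
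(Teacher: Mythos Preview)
Your approach is the unrolled version of the paper's proof, which proceeds by induction on $t=\dim R_{\mathfrak m}$: the paper passes from $R_{\mathfrak m}$ to $S=R_{\mathfrak m}/(\pi_1)$, observes that $c_{\mathfrak m}(x)=c_{\mathfrak n}\bigl(\overline{\pi_1^{-v_1(x)}x}\bigr)$ for the induced $\mathfrak n$-colouring on $S$, and applies the lemma to the reduced equation $\sum_{i\in J}\bar a_i\,\overline{\pi_1^{-\nu}m_i}=0$ over the index set $J=\{i:v_1(m_i)=\nu\}$. You carry out all $t$ reductions explicitly instead. The substance is identical.

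Your worry about unit factors is a phantom: for $i\in I_j$ you have \emph{by definition} $v_{j+1}(x_j^{(i)})=e_j$, so the colouring's own normalising factor $\pi_{j+1}^{-v_{j+1}(x_j^{(i)})}$ is exactly $\pi_{j+1}^{-e_j}$, and the reduction of $\pi_{j+1}^{-e_j}x_j^{(i)}$ is literally $x_{j+1}^{(i)}$ with no stray $u_i$. The propagated relation at level $j+1$ is therefore simply $\sum_{i\in I_j}\bar a_i\,x_{j+1}^{(i)}=0$ in $K_{j+1}$. Relatedly, your claim that the $x_{j+1}^{(i)}$ ``all coincide'' on the surviving set is false at intermediate levels and not needed: coincidence holds (and is used) only at the bottom, where monochromaticity gives $x_t^{(i)}=c$ for every $i$. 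The extra restriction you contemplate (``agreeing with the monochromatic value at the next level'') is likewise superfluous. Once these confusions are removed the argument is clean; packaging it as an induction on $t$, as the paper does, avoids the bookkeeping entirely.
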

\begin{proof} We will prove the claim by induction on $t=\dim R_\mathfrak m$. If $t=0$, then by Nullstellensatz $R= K$ is a finite field, and the fact that  $(m_1,\dots,m_l)$ is monochromatic means that $m_1,\dots, m_l$ are all equal and nonzero. It follows that $\sum_{i=1}^l a_i=0$.

If $t>0$, write $\mathfrak{m}R_{\mathfrak m} = (\pi_1,\dots,\pi_t) R_{\mathfrak m}$ with $t=\dim R_\mathfrak m$, let $c_{\mathfrak m}$ be the associated ${\mathfrak m}$-colouring, and as before denote the $\pi_1$-adic valuation on $K$ by $v_1$. Let $S=R/(\pi_1)$ and $\mathfrak n = \mathfrak{m}/(\pi_1)$. For $x\in (R_{\mathfrak m})_{(\pi_1)}$, denote by $\bar{x}$ the image of $x$ in the fraction field of $S_{\mathfrak n}$ by the quotient map. Then $\mathfrak n$ is a maximal ideal of $S$, $\mathfrak{n}S_{\mathfrak n}=(\bar{\pi}_2,\dots,\bar{\pi}_t)$, and $S_{\mathfrak n} \cong R_{\mathfrak m}/(\pi_1)$ is a regular local ring. Directly from the definition, we see that if $c_{\mathfrak n}$ is the $\mathfrak n$-colouring (with the choice of generators $\bar{\pi}_2,\dots,\bar{\pi}_t$ of $\mathfrak{n}S_{\mathfrak n}$), then for $x\in K^*$ we have $$c_{\mathfrak{m}}(x)=c_{\mathfrak n}(\overline{\pi_1^{-v_1(x)} x}).$$ 

Let now $(m_1,\dots,m_l)^{\intercal}\in K^l$  be a nontrivial monochromatic solution of the equation $\sum_{i=1}^l a_i m_i = 0$ with respect to the colouring $c_{\mathfrak m}$. Then all $m_i$ are nonzero (since $c_{\mathfrak m}(m_i)=0$ only for $m_i=0$). Put $\nu =\min_{1\leq i \leq l} v_1(m_i)$ and let $$J=\{i\in\{1,\dots,l\}\mid v_1(m_i) = \nu\}.$$ Multiplying the equation  $\sum_{i=1}^l a_i m_i = 0$ by $\pi_1^{-\nu}$ and passing to the fraction field of $S_{\mathfrak n}$, we get $$\sum_{i\in J} \bar{a}_i \overline{\pi_1^{-\nu} m_i}=0.$$ Furthermore, we have $c_{\mathfrak n}(\overline{\pi_1^{-\nu} m_i})=c_{\mathfrak{m}}(m_i)$ for $i\in J$, and hence the elements $\overline{\pi_1^{-\nu} m_i}$ are monochromatic for $i\in J$. By the induction hypothesis, there exists a nonempty subset $I\subset J$ such that $\sum_{i\in I} \bar{a}_i$ lies in $\mathfrak{n}$. Hence $\sum_{i\in I} a_i$ lies in $\mathfrak{m}$.
\end{proof}

In order to proceed, we need the following fundamental fact. 

\begin{lemma}\label{regularlocus}  Let  $R$ be an integral domain that is a finitely generated $\Z$-algebra. Then there exists a maximal ideal $\mathfrak m$ of $R$ such that $R_{\mathfrak m}$ is a regular local ring.\end{lemma}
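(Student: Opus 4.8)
The plan is to exhibit a nonempty open subset of $\Spec R$ consisting of primes $\mathfrak p$ for which $R_{\mathfrak p}$ is regular, and then to use the Jacobson property of $R$ to find a maximal ideal inside it. First note that $R$ is noetherian (Hilbert basis theorem) and Jacobson, the latter because $\Z$ is Jacobson and $R$ is a finitely generated $\Z$-algebra (the general Nullstellensatz recalled above). The regular locus $\operatorname{Reg}(R)=\{\mathfrak p\in\Spec R: R_{\mathfrak p}\text{ is a regular local ring}\}$ is nonempty, as it contains $(0)$: the localization $R_{(0)}$ is the fraction field $K$ of $R$, which is a field, hence a regular local ring of dimension $0$.

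The crux is to show that $\operatorname{Reg}(R)$ contains a nonempty open subset of $\Spec R$, equivalently that there is a nonzero $f\in R$ such that $R_f$ is a regular ring. The quickest route is to invoke that finitely generated $\Z$-algebras are (quasi-)excellent and that the regular locus of a quasi-excellent noetherian ring is open. A more hands-on argument proceeds by cases on the characteristic. If $\char R=p$, then $R$ is a finitely generated algebra over the perfect field $\F_p$, so by generic smoothness there is a nonzero $f\in R$ with $R_f$ smooth over $\F_p$, hence regular. If $\char R=0$, then $\Z\setminus\{0\}$ consists of nonzerodivisors on $R$, so $R\otimes_\Z\Q$ is a finitely generated domain over the perfect field $\Q$; its smooth locus is a nonempty open $\Spec(R\otimes_\Z\Q)_{\bar g}$, and choosing a lift $g\in R$ of $\bar g$ and spreading out the smoothness of $\Spec R_g\to\Spec\Z$ over a nonempty open $\Spec\Z[1/N]$ gives that $R_{gN}$ is smooth over the regular ring $\Z[1/N]$, hence regular; take $f=gN$.

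Finally, since $R$ is a Jacobson domain, the intersection of all its maximal ideals is the nilradical $(0)$, so the nonzero element $f$ lies outside some maximal ideal $\mathfrak m$. Then $R_{\mathfrak m}=(R_f)_{\mathfrak m R_f}$ is a localization of the regular ring $R_f$, hence a regular local ring, which is what we wanted. The one genuinely nontrivial ingredient is the openness of the regular locus (or, equivalently, the production of the regular $R_f$): this rests either on excellence of finitely generated $\Z$-algebras, or on generic smoothness over a perfect field together with spreading out over $\Spec\Z$; the passage from a nonempty open set to a maximal ideal via the Jacobson property is purely formal.
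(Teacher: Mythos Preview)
Your argument is correct. The paper itself does not give a proof of this lemma: it simply remarks that the result is well known, lies in the theory of excellent rings, and cites \cite[Corollaire 6.12.6]{bookEGAIV.2} and \cite[(32.B)]{bookMatsumuraCA}. Your first route---invoke that finitely generated $\Z$-algebras are excellent, hence have open regular locus, and then use the Jacobson property to pick a maximal ideal in that locus---is precisely the content behind those references, so in that sense you are filling in what the paper leaves as a citation.

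Your second, more hands-on route (generic smoothness over a perfect field, combined in characteristic~$0$ with spreading out over $\Spec\Z$) is a genuine alternative that avoids the word ``excellent'' and makes the openness of the regular locus explicit. This is more elementary in spirit but requires slightly more bookkeeping (the Jacobian criterion, constructibility of the image of the non-smooth locus, etc.). Either way, the passage from a nonempty open set to a maximal ideal via Jacobson is, as you note, formal: since $R$ is a domain the Jacobson radical is zero, so the nonzero element $f$ avoids some maximal ideal $\mathfrak m$, and $R_{\mathfrak m}$ is a localization of the regular ring $R_f$.
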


This is a well-known result that is usually proven in a much more general context of excellent rings, introduced by Grothendieck. The ring $\Z$ is an example of an excellent ring, as is any Dedekind integral domain of characteristic zero. For the proof of Lemma \ref{regularlocus}, see, e.g.\ \cite[Corollaire 6.12.6]{bookEGAIV.2} or \cite[(32.B)]{bookMatsumuraCA}.

\begin{lemma}\label{auxlemma:domains} Let $R$ be an integral domain with fraction field $K$ and let $\mathbf{A}$ be a  matrix with entries in $R$. Let $R'$ be a subring of $K$ containing all the entries of $\mathbf{A}$ and let $K'$ denote its fraction field. If $\mathbf{A}$ is partition regular over $R$, then it is also partition regular over the field of rational functions $K'(t)$.
\end{lemma}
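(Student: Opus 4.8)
The plan is to reduce from the big field $K'(t)$ down to the small domain $R$ in two steps, transferring partition regularity along each. First I would dispose of the variable $t$: I claim that if $\mathbf{A}$ is partition regular over a field $K'$, then it is partition regular over $K'(t)$. Indeed, by Proposition \ref{prop:PR_under_localisation}.\eqref{prop:PR_under_localisation2} applied to the polynomial domain $K'[t]$, partition regularity over $K'[t]$ is equivalent to partition regularity over $K'(t)$; and $K'[t]$ contains $K'$ as a subring, so partition regularity over $K'$ implies partition regularity over $K'[t]$ by the submodule-monotonicity remark of Section \ref{sec:bn}. Hence it suffices to pass from partition regularity over $R$ to partition regularity over $K'$.

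For that step, I would first replace $R$ by $R'$. Since $R'$ is a subring of $K$ containing all entries of $\mathbf{A}$, we have $R\subset R'\subset K$, and $K'=\mathrm{Frac}(R')$ while $K=\mathrm{Frac}(R)$; note that $R$ and $R'$ generate the same fraction field is \emph{not} assumed, so we must be careful — $K'\subset K$ is a subfield. The issue is to get from partition regularity over $R$ to partition regularity over the possibly larger ring $R'$ and then over its fraction field $K'$. The natural move is Corollary \ref{cor:compactness}: partition regularity of $\mathbf{A}$ over $R$ means, for each number of colours $r$, partition regularity over some finitely generated $R$-submodule, hence (taking a common clearing denominator, as in the proof of Proposition \ref{prop:PR_under_localisation}) partition regularity over $R$ itself for each $r$; the real content is that we want the \emph{same} matrix to be partition regular over $K'$. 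Here I would use that $R\subset K'$: wait — this need not hold, since $R\not\subset R'$ in general. The clean route is instead: $\mathbf{A}$ has entries in $R'$, and $R'\subset K'\subset K$. By Proposition \ref{prop:PR_under_localisation}.\eqref{prop:PR_under_localisation2}, partition regularity over $R'$ is equivalent to partition regularity over $K'$, so it suffices to show $\mathbf{A}$ is partition regular over $R'$ knowing it is partition regular over $R$.

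To bridge $R\to R'$, I would invoke compactness once more together with a localisation/clearing-denominators argument: fix $r$; by hypothesis $\mathbf{A}$ is partition regular over $R$ for $r$ colours, and since $R\subset R'\subset K$ with $K=\mathrm{Frac}(R)$, the inclusion $R\hookrightarrow R'$ of $R$-modules shows $\mathbf{A}$ is partition regular over the $R$-module $R'$ for $r$ colours. But partition regularity of the matrix $\mathbf{A}$ (with entries in $R'$) over the ring $R'$ for $r$ colours is the same as partition regularity of $\mathbf{A}$ over the $R'$-module $R'$, which in turn restricts to partition regularity over $R'$ as an $R$-module — and conversely an $R$-monochromatic nontrivial solution with entries in $R'$ to $\mathbf{A}\mathbf{m}=0$ is automatically an $R'$-solution, because the equation $\mathbf{A}\mathbf{m}=0$ only involves the entries of $\mathbf{A}$, which lie in $R'$. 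So the notions coincide, giving partition regularity over $R'$ for all $r$, hence over $R'$, hence (Proposition \ref{prop:PR_under_localisation}.\eqref{prop:PR_under_localisation2}) over $K'$, and finally over $K'(t)$ by the first paragraph.

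The main obstacle I anticipate is purely bookkeeping rather than mathematical: keeping straight the distinction between "$\mathbf{A}$ partition regular over the $R$-module $M$'' and "$\mathbf{A}$ partition regular over the ring $M$ when $M$ happens to be a ring containing the entries of $\mathbf{A}$,'' and making sure the submodule-monotonicity and the homomorphism lemma are applied to the correct scalar rings. Once one observes that an equation $\mathbf{A}\mathbf{m}=0$ with $\mathbf{A}$ over $R'$ has the same solution set whether we view the ambient object as an $R$-module or an $R'$-module, everything collapses to two applications of Proposition \ref{prop:PR_under_localisation}.\eqref{prop:PR_under_localisation2} and the trivial submodule remark.
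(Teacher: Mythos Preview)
Your argument has a genuine gap. You twice use the inclusion $R\subset R'$ (once explicitly, and again as ``the inclusion $R\hookrightarrow R'$ of $R$-modules''), but nothing in the hypotheses guarantees this: $R'$ is only assumed to be a subring of $K$ containing the entries of $\mathbf{A}$, not containing all of $R$. For instance, take $R=\mathbb{F}_2[t]$, $\mathbf{A}=(1,1,-1)$ the Schur matrix, and $R'=\mathbb{F}_2$; then $R\not\subset R'$. Worse, your intermediate goal ``$\mathbf{A}$ is partition regular over $K'$'' is simply false in this example: over $K'=\mathbb{F}_2$ the only candidate monochromatic solution under the colouring that gives each element its own colour is $(1,1,1)$, and $1+1-1=1\neq 0$. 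So the route through $K'$ cannot work in general, and no amount of bookkeeping about $R$-modules versus $R'$-modules will repair it.

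The paper's proof avoids this by going \emph{up} rather than down. From partition regularity over $R$ one first passes to $K$ (Proposition~\ref{prop:PR_under_localisation}.\eqref{prop:PR_under_localisation2}), then regards $K$ as a $K'$-vector space. For a fixed number of colours $r$, Corollary~\ref{cor:compactness}.\eqref{cor:compactness1} produces a \emph{finite-dimensional} $K'$-subspace $V\subset K$ over which $\mathbf{A}$ is partition regular for $r$ colours. The point of the transcendental variable is precisely that $K'(t)$ has \emph{infinite} dimension over $K'$, so $V$ embeds $K'$-linearly into $K'(t)$, giving partition regularity over $K'(t)$ for $r$ colours; since $r$ was arbitrary, you are done. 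In short: the compactness argument you gesture at is the right tool, but it must be applied to the $K'$-module $K$ (not to $R$ or $R'$), and the target of the embedding must be $K'(t)$ directly, not $K'$.
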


\begin{proof}
Since $\mathbf{A}$ is partition regular over $R$, it is also partition regular over $K$. We may regard $K$ as a $K'$-module. Fix a number of colours $r$. By Corollary \ref{cor:compactness}.\eqref{cor:compactness1},  there exists a finitely dimensional $K'$-vector space $V$ such that $\mathbf A$ is partition regular over $V$ for $r$ colours. Since $V$ is isomorphic with a~$K'$-vector subspace of $K'(t)$, we conclude that $\mathbf A$ is partition regular over $K'(t)$ for $r$ colours. This gives the claim since the number of colours  $r$ was chosen arbitrarily.
\end{proof}

\begin{theorem}\label{mainthm:domains} Let $R$ be an infinite integral domain and let $\mathbf{A}$ be a $k\times l$ matrix with entries in $R$. The following conditions are equivalent: \begin{enumerate} \item The matrix $\mathbf{A}$ is partition regular over $R$. \item  The matrix $\mathbf{A}$  satisfies the columns condition.\end{enumerate}
\end{theorem}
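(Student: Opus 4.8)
I would prove the two implications separately, as they have quite different character. The implication ``columns condition $\Rightarrow$ partition regular'' is the easy one: since $R$ is an infinite domain, the columns condition of Definition~\ref{def:cc} coincides with the generalised columns condition of Definition~\ref{def:gcc} (take all the $d_t$ equal to $1$; the requirement that the ideals $d_0(d_1\cdots d_m)^n R$ be infinite is automatic, every nonzero ideal of an infinite domain being infinite), so this is precisely \cite[Theorem~2.4]{BDHL} and nothing new is required. One could instead reprove it by the classical Rado argument, building a monochromatic solution cell by cell via iterated van der Waerden, but there is no point repeating that.

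For the converse I would first reduce to a finitely generated $\Z$-algebra. Let $R_0\subseteq R$ be the subring generated by the finitely many entries of $\mathbf A$, a domain and a finitely generated $\Z$-algebra with fraction field $K_0$. By Lemma~\ref{auxlemma:domains}, $\mathbf A$ is partition regular over $K_0(t)$, and hence, by Proposition~\ref{prop:PR_under_localisation}.\eqref{prop:PR_under_localisation2} applied to the subdomain $R_1:=R_0[t]\subset K_0(t)$, also over $R_1$. Now $R_1$ is a finitely generated $\Z$-algebra which is a domain and has positive Krull dimension; the only role of adjoining $t$ is to secure this last property (which fails for $R_0$ itself when $R_0$ is a finite field) and which is needed below. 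By Lemma~\ref{lem:cc_in_fields} the columns condition for $\mathbf A$ over $R$ is equivalent to the columns condition over $R_1$, so from now on I may assume $R$ itself is a finitely generated $\Z$-algebra domain of positive Krull dimension with fraction field $K$, and I must show that partition regularity over $R$ forces the columns condition.

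The core step is the following analogue of Lemma~\ref{oneequationdom} for systems, to be proved by induction on $\dim R_{\mathfrak m}$ exactly as that lemma: \emph{for a finitely generated $\Z$-algebra $S$} (not necessarily a domain --- the induction passes to quotients $S/(\pi_1)$ that may fail to be domains) \emph{and a maximal ideal $\mathfrak m$ with $S_{\mathfrak m}$ regular, if the system $\mathbf A\mathbf m=0$ has a nontrivial monochromatic solution in $\mathrm{Frac}(S_{\mathfrak m})^l$ with respect to some $\mathfrak m$-colouring, then the reduction $\bar{\mathbf A}$ of $\mathbf A$ modulo $\mathfrak m$ satisfies the columns condition over the finite field $S/\mathfrak m$.} In the inductive step one peels off the first valuation $v_1$ of the tower $\mathrm{Frac}(S_{\mathfrak m})=K_0\to K_1\to\dots\to K_t=S/\mathfrak m$, groups the indices $\{1,\dots,l\}$ according to the value of $v_1$ on the coordinates of the monochromatic solution, passes to the lowest group, divides by the appropriate power of $\pi_1$ and reduces modulo $\pi_1$ to obtain a nontrivial monochromatic solution of a smaller system over $K_1$, applies the inductive hypothesis there, and then glues the partition so obtained for the lowest group together with the remaining groups into a partition of $\{1,\dots,l\}$ witnessing the columns condition for $\bar{\mathbf A}$. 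I expect this gluing to be the main obstacle: one must verify that the higher valuation groups contribute column sums lying in the $S/\mathfrak m$-span of the columns of the earlier groups, and order the cells so that all the span conditions hold simultaneously; for a single equation this is exactly the bookkeeping carried out (in disguised form) inside the proof of Lemma~\ref{oneequationdom}.

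Granting the core step, I would finish as follows. Suppose $\mathbf A$ does not satisfy the columns condition over $K$. There are only finitely many partitions $P$ of $\{1,\dots,l\}$, and for each $P$ the failure of the columns condition with respect to $P$ over $K$ can be certified by a single nonzero element $f_P\in R$ --- a nonzero coordinate of $\sum_{i\in I_0}\mathbf c_i$, or the value at $\sum_{i\in I_t}\mathbf c_i$ of an $R$-linear map $R^k\to R$ killing all columns of the earlier cells --- chosen so that $f_P\notin\mathfrak m$ already forces the columns condition with respect to $P$ to fail over $R/\mathfrak m$. Then $g:=\prod_P f_P$ is a nonzero element of $R$, and applying Lemma~\ref{regularlocus} to the finitely generated $\Z$-algebra domain $R[1/g]$ produces a maximal ideal $\mathfrak m$ of $R$ with $g\notin\mathfrak m$ and $R_{\mathfrak m}$ regular (a maximal ideal of $R[1/g]$ with regular localisation contracts to such an $\mathfrak m$, which is maximal in $R$ since $R/\mathfrak m$ embeds into a finite field). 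For this $\mathfrak m$, $\bar{\mathbf A}$ fails the columns condition over $R/\mathfrak m$ for \emph{every} $P$. On the other hand $\mathbf A$ is partition regular over $R$, hence over $K$ by Proposition~\ref{prop:PR_under_localisation}.\eqref{prop:PR_under_localisation2}, so every $\mathfrak m$-colouring admits a nontrivial monochromatic solution of $\mathbf A\mathbf m=0$ in $K^l$, and the core step then gives that $\bar{\mathbf A}$ \emph{does} satisfy the columns condition over $R/\mathfrak m$ --- a contradiction. Hence $\mathbf A$ satisfies the columns condition over $K$, and therefore over $R$ by Lemma~\ref{lem:cc_in_fields}.
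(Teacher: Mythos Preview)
Your reduction to a finitely generated $\Z$-algebra of positive Krull dimension and your ``finishing step'' (certifying failure of the columns condition by the elements $f_P$, inverting $g=\prod_P f_P$, and picking a regular maximal ideal of $R[1/g]$) are correct and are in fact a repackaging of what the paper does with the vectors $\mathbf v$ and $\mathbf v_I$ and the ring $R'$.  The gap is in your ``core step''.

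The systems analogue of Lemma~\ref{oneequationdom} that you state---a single monochromatic solution with respect to some $\mathfrak m$-colouring forces the full columns condition for $\bar{\mathbf A}$ over $S/\mathfrak m$---is \emph{false}.  Take $S=\Z$, $\mathfrak m=(2)$, and
\[
\mathbf A=\begin{pmatrix}1&1&-1\\0&2&-1\end{pmatrix}.
\]
Then $(m_1,m_2,m_3)=(1,1,2)$ is a nontrivial solution, and $c_2(1)=c_2(2)=1$, so it is monochromatic.  But
\[
\bar{\mathbf A}=\begin{pmatrix}1&1&1\\0&0&1\end{pmatrix}\ \text{over }\F_2
\]
does \emph{not} satisfy the columns condition: the only subset with vanishing column sum is $I_0=\{1,2\}$, and then $\bar{\mathbf c}_3=(1,1)^\intercal$ is not in the $\F_2$-span of $\bar{\mathbf c}_1=\bar{\mathbf c}_2=(1,0)^\intercal$.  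This is precisely the situation your ``gluing'' must handle ($J_0=\{1,2\}$, $J_1=\{3\}$), and it shows that the column sum over a higher-valuation group need not lie in the span of the columns from the lower groups.  The phenomenon is invisible for $k=1$ because there the span condition is automatic once some earlier column is nonzero; that is why Lemma~\ref{oneequationdom} goes through and why its proof gives no hint of the obstruction for $k\ge 2$.

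The paper circumvents this by \emph{never} attempting a systems-level induction.  Instead it chooses, once and for all, test vectors $\mathbf v$ and $\mathbf v_I$ (your $f_P$'s arise from the same linear-algebra data), adjoins to $R'$ the inverses of the relevant inner products, and then applies Lemma~\ref{oneequationdom} only to the \emph{single} equations $(\mathbf v^\intercal\mathbf A)\mathbf m=0$ and $(\mathbf v_I^\intercal\mathbf A)\mathbf m=0$.  Each application yields one cell $I_t$ of the partition directly over $K$, because the inverted inner products force ``$\in\mathfrak m$'' to mean ``$=0$'' (resp.\ ``$\in V_{t-1}$'').  Your finishing step already contains these test functionals implicitly; what is missing is to feed them into Lemma~\ref{oneequationdom} one equation at a time, rather than to attempt the core step for the whole system.
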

\begin{proof} The fact that matrices satisfying the columns condition over an infinite integral domain are partition regular follows from \cite[Theorem 2.4]{BDHL}.

For the opposite implication, assume that $\mathbf{A}$ is partition regular over $R$. We will prove that $\mathbf{A}$ satisfies the columns condition. Let $K$ be the fraction field of $R$. For two vectors $\mathbf{v},\mathbf{w} \in R^k$, we denote their standard  inner product by $(\mathbf{v},\mathbf{w})$.  Let $ \mathbf{c}_1,\dots, \mathbf{c}_l \in R^k$ denote the columns of $\mathbf{A}$. Consider the set
 $$S=\{  J \subset \{1,\dots,l\} \mid \sum_{j\in J} \mathbf{c}_j \neq \mathbf{0}\}.$$ 
 We claim that we can find a vector $\mathbf{v} \in R^k$ such that  for all $J\in S$ we have 
$$(\sum_{j\in J} \mathbf{c}_j , \mathbf{v}) \neq 0.$$
In fact, for all $J\in S$ the set of vectors in $K^k$ orthogonal to $\sum_{j\in J} \mathbf{c}_j $ is a proper vector subspace of $K^k$. Since a vector space over an infinite field is not a finite union of its proper subspaces, we can find a vector in $K^k$ that is not orthogonal to $\sum_{j\in J} \mathbf{c}_j$ for all $J\in S$. Multiplying this vector by an appropriate element of $R$, we obtain a vector in $R^k$ that has the desired property.

 For $I\subset \{1,\dots,l\}$, let $V_I$ be the $K$-vector subspace of $K^k$ spanned by $\mathbf{c}_i$ with $i\in I$ and let $$ S_I=\{  J \subset \{1,\dots,l\} \mid J\cap I =\emptyset \mbox{ and } \sum_{j\in J} \mathbf{c}_j \not\in V_I\}.$$ A similar argument shows that there exists a vector $\mathbf{v}_I \in R^k$ (which depends on $I$, but not on $J$) such that \begin{equation}\label{eqn:S_Iset} (\mathbf{c}_i, \mathbf{v}_I)=0 \mbox{ for all } i\in I  \qquad \mbox{ and }\qquad (\sum_{j\in J} \mathbf{c}_j,\mathbf{v}_I)\neq 0 \mbox{ for all } J \in S_I.\end{equation}  
 
Let $R'$ be the subring of $K$ generated by all the entries of $\mathbf{A}$, $\mathbf v$, and $\mathbf{v}_I$ for $I\subset \{1,\dots, l\} $, as well as the inverses of the elements $(\sum_{j\in J} \mathbf{c}_j,\mathbf{v})$ for $J\in S$ and $(\sum_{j\in J} \mathbf{c}_j,\mathbf{v}_I)$ for $I\subset \{1,\dots,l\}$ and $J\in S_I$. Denote the fraction field of $R'$ by $K'$. We will now prove that the matrix $\mathbf{A}$ satisfies the columns condition.

Consider the polynomial ring $R''=R'[t]$ in one variable over $R'$. The ring $R''$ is an integral domain that is finitely generated as a $\Z$-algebra. By Lemma \ref{regularlocus}, there exists a maximal ideal $\mathfrak{m}$ of $R''$ such that $R''_{\mathfrak m}$ is a regular local ring. Let $c_{\mathfrak m}$ be an $\mathfrak m$-colouring of the fraction field $K''=K'(t)$ of $R''_{\mathfrak m}$. By Lemma \ref{auxlemma:domains}, $\mathbf{A}$ is partition regular over $K''$ and so  the equation $\mathbf{A} \mathbf{m}=\mathbf{0}$ has a nontrivial monochromatic solution $\mathbf{m} \in (K'')^l$.

We first claim that there exists $I_0\subset \{1,\dots, l\}$ such that $\sum_{i\in I_0} \mathbf{c}_i =\mathbf{0}$. Taking the inner product of  $\mathbf{A} \mathbf{m}=\mathbf{0}$ with the vector $\mathbf{v}$, we get $$\sum_{i=1}^l (\mathbf{c}_i,\mathbf{v})m_{i}=0.$$ By Lemma \ref{oneequationdom}, there exists a nonempty subset $I_0 \subset \{1,\dots,l\}$ such that $\sum_{i\in I_0} (\mathbf{c}_i,\mathbf{v}) \in \mathfrak m$. This means that $$\sum_{i\in I_0} \mathbf{c}_i =\mathbf{0},$$ since otherwise we would have $I_0\in S$, and hence $(\sum_{i\in I_0} \mathbf{c}_i,\mathbf{v})$ would be invertible in $R' \subset R''$. 

We will now construct inductively nonempty subsets $I_1,\dots,I_m$ such that $\{1,\dots,l\}=I_0\cup\dots\cup I_m$ and for $t\in\{1,\dots,m\}$ we have $$I_t  \subset \{1,\dots,l\} \setminus (I_0\cup\dots\cup I_{t-1}) \qquad \text{ and } \qquad \sum_{i\in I_t} \mathbf{c}_i\in V_{t-1},$$ where $V_{t-1}$ denotes the $K$-vector space spanned by columns $\mathbf{c}_i$ with $i\in I_0\cup \dots\cup I_{t-1}$. 

Assume that the subsets $I_1,\dots,I_{t-1}$ have already been constructed, but $I_0\cup\dots\cup I_{t-1} \varsubsetneq \{1,\dots,l\}$. We will construct the set $I_t$. Let  $\mathbf{v}_{t-1}=\mathbf{v}_{I_0\cup\dots\cup I_{t-1}}$ be the vector considered in \eqref{eqn:S_Iset}. Taking the inner product of  $\mathbf{A} \mathbf{m}=\mathbf{0}$ with the vector $\mathbf{v}_{t-1}$, we get $$\sum_{i=1}^l (\mathbf{c}_i,\mathbf{v}_{t-1})m_{i}=0.$$
Since $(\mathbf{c}_i,\mathbf{v}_{t-1})=0$ for all $i\in I_0\cup\dots\cup I_{t-1}$, using once more  Lemma \ref{oneequationdom}  we get  that there exists a nonempty subset $I_t \subset \{1,\dots,l\}\setminus (I_0\cup\dots\cup I_{t-1})$ such that $\sum_{i\in I_t} (\mathbf{c}_i,\mathbf{v}_{t-1}) \in \mathfrak m$. This means that $\sum_{i\in I_t} (\mathbf{c}_i,\mathbf{v}_{t-1})$ is not invertible in $R' \subset R''$ and  hence
$$\sum_{i\in I_t} \mathbf{c}_i \in V_{t-1}.$$ 

This ends the inductive construction and shows that $\mathbf A$ satisfies the columns condition with the corresponding partition $\{1,\dots,l\}=I_{0}\cup\dots\cup I_m$.
  \end{proof}
  
  \begin{remark}  In \cite{HLS2003} it was pointed out that while in the classical version of Rado's Theorem there are several known proofs of the claim that matrices satisfying the columns condition are partition regular, there is essentially only one known proof of the opposite implication, and it uses colourings $c_p$. As a corollary of the proof, one obtains the slightly unusual statement that a matrix with integer entries is partition regular over $\Z$ if and only if it is partition regular with respect to all the colourings $c_p$. The proof of Theorem \ref{mainthm:domains} establishes the following generalisation: A matrix with entries in an integral domain $R$ that is a finitely generated $\Z$-algebra is partition regular if and only if it is partition regular with respect to all the $\mathfrak m$-colourings of $R$.  \end{remark}

\section{Rado rings}\label{sec:Rado}

In this section we study Rado rings. We recall that each matrix $\mathbf{A}$ with entries in a ring $R$ that satisfies the generalised columns condition is partition regular over $R$. A ring $R$ is called a Rado ring if the converse holds for all matrices $\mathbf{A}$ with entries in $R$. 
If $R$ is an infinite integral domain, the columns condition and the generalised columns condition coincide. If $R$ is a finite field, the generalised columns condition over $R$ is stronger. Nevertheless, in both cases we obtain the following result.

\begin{corollary}\label{cor:domainsareRado} Every integral domain is a Rado ring.
\end{corollary}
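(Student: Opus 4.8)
The plan is to split into two cases according to whether the domain $R$ is infinite or finite, since these behave slightly differently with respect to the columns condition versus the generalised columns condition. First I would dispense with the infinite case. If $R$ is an infinite domain, then for any matrix $\mathbf{A}$ with entries in $R$ the columns condition and the generalised columns condition are equivalent (this was remarked after Definition \ref{def:gcc}, and rests on the fact that in an infinite domain any nonzero element generates an infinite ideal, so condition \eqref{jestjuzbardzopozno} is automatic, while the role of the $d_t$'s is absorbed by passing to the fraction field). Combining this equivalence with Theorem \ref{mainthm:domains}, which says that $\mathbf{A}$ is partition regular over $R$ if and only if it satisfies the columns condition, we immediately get that $\mathbf{A}$ is partition regular over $R$ if and only if it satisfies the generalised columns condition, i.e.\ $R$ is a Rado ring.

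Next I would treat the finite case, where $R$ is a finite domain and hence a finite field $\F_q$. Here the generalised columns condition is more restrictive: as noted at the start of Section \ref{sec:dom}, it reduces to the requirement that the sum of all columns of $\mathbf{A}$ is zero (one takes $m=0$, $I_0 = \{1,\dots,l\}$, and $d_0$ any nonzero element, while condition \eqref{jestjuzbardzopozno} is vacuous since $m=0$). So I need to show that for a matrix $\mathbf{A}$ over $\F_q$, partition regularity over $\F_q$ is equivalent to $\sum_{i=1}^l \mathbf{c}_i = 0$. The implication that $\sum_i \mathbf{c}_i = 0$ forces partition regularity follows from \cite[Theorem 2.4]{BDHL} (a matrix satisfying the generalised columns condition is partition regular), or can be seen directly: if the column sum vanishes, then the constant vector $\mathbf{m} = (1,1,\dots,1)^{\intercal}$ is a nontrivial solution, and it is monochromatic under every colouring. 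For the converse, suppose $\mathbf{A}$ is partition regular over the finite field $\F_q$. Colour $\F_q$ by the identity colouring $\chi = \mathrm{id}\colon \F_q \to \F_q$ (that is, $q$ colours, each singleton its own colour). A monochromatic solution then has all coordinates equal, say all equal to $c \in \F_q$, and nontriviality forces $c \neq 0$; then $\mathbf{A}\mathbf{m} = c\sum_{i=1}^l \mathbf{c}_i = 0$ with $c$ invertible gives $\sum_{i=1}^l \mathbf{c}_i = 0$, as desired. Hence $\F_q$ is a Rado ring.

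The two cases together establish that every domain is a Rado ring. I do not expect a genuine obstacle here: the corollary is essentially a repackaging of Theorem \ref{mainthm:domains} together with the already-recorded relationship between the two forms of the columns condition over domains, plus an easy separate argument for finite fields. The only point requiring a little care is making sure the finite-field case is handled explicitly rather than swept under ``Theorem \ref{mainthm:domains}'', since Theorem \ref{mainthm:domains} is stated only for infinite domains and the generalised columns condition genuinely differs from the columns condition in the finite case.
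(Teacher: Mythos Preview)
Your proposal is correct and matches the paper's approach exactly: split into the infinite case (handled by Theorem \ref{mainthm:domains} together with the equivalence of the two columns conditions over infinite domains) and the finite-field case (handled by colouring each element with its own colour to force a constant nonzero solution, whence the column sum vanishes). The paper's proof is terser but contains precisely these two ingredients.
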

\begin{proof} If $R$ is infinite, this follows from Theorem \ref{mainthm:domains}. If $R$ is finite, we may give each element of $R$ a different colour; we then easily see that a matrix $\mathbf{A}$ with entries in $R$ is partition regular over $R$ if and only if the sum of all the columns is zero, which in this case is equivalent to the generalised columns condition.\end{proof}

In \cite{BDHL}, the only given example of a non-Rado ring was the infinite product ring $R=\prod_{i=1}^{\infty} \Z{/}n\Z$ for a non-squarefree integer $n$. This example is somewhat unsatisfactory, since the ring in question is not noetherian.

In the next subsection we will classify noetherian rings that are Rado. As a by-product, we obtain many examples of  noetherian non-Rado rings.

\subsection*{Noetherian Rado rings}

 We begin with the following lemma that will be used to show that certain rings are not Rado.

\begin{lemma}\label{lemexplmat} Let $R$ be a ring and let $b$ be an element of $R$. Consider the   $3\times 3$ matrix
\[
\mathbf{B}=\begin{pmatrix}
1 & 1 & -1 \\
0 & b & 0 \\
0 & 0 & b \end{pmatrix}
\] 
\begin{enumerate} \item\label{lemexplmat1} The matrix $\mathbf{B}$ is partition regular over $R$ if and only if $\ann(b)$ is infinite.
\item\label{lemexplmat2} The matrix $\mathbf{B}$ satisfies the generalised columns condition over $R$ if and only if there exists $d\in R$ such that $db=0$ and $d^n R$ is infinite for all $n\geq 0$.\end{enumerate}
\end{lemma}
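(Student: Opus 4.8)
The plan is to analyze the matrix $\mathbf{B}$ directly. Writing out the system $\mathbf{B}\mathbf{m}=0$ with $\mathbf{m}=(m_1,m_2,m_3)^{\intercal}$, the second and third rows give $bm_2=0$ and $bm_3=0$, i.e. $m_2,m_3\in\ann(b)$, while the first row gives $m_1=m_3-m_2$. So the nontrivial solutions are exactly the triples $(m_3-m_2,m_2,m_3)$ with $m_2,m_3\in\ann(b)$ not both making the triple zero (note $(0,0,0)$ is the only trivial one, since $m_2=m_3=0$ forces $m_1=0$). For part \eqref{lemexplmat1}, first suppose $\ann(b)$ is infinite. I would exhibit partition regularity by reducing to a known fact: the single equation $x_1+x_2=x_3$ (Schur's equation, or rather the equation whose partition regularity over an infinite abelian group is classical) restricted to the subgroup $\ann(b)$. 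More precisely, partition regularity of $\mathbf{B}$ over $R$ is equivalent to partition regularity of the equation $m_1+m_2-m_3=0$ over the $\Z$-module (abelian group) $\ann(b)$, because any monochromatic solution of that equation in $\ann(b)$ automatically satisfies the bottom two rows. Since $\ann(b)$ is an infinite abelian group, $x+y=z$ is partition regular over it — this is a result of Deuber \cite{Deuber1975} for abelian groups (alternatively it follows from the columns condition over $\Z$ applied via Theorem \ref{mainthm:modules} or directly, since the matrix $(1,1,-1)$ satisfies the columns condition with $I_0=\emptyset$... ) — hmm, actually the cleanest route: $(1,1,-1)$ has column sum $1+1-1=1\neq 0$, so I should instead use $I_0=\{3\}$? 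No: $\mathbf{c}_3=-1$, sum over $I_0=\{3\}$ is $-1\neq 0$. The correct decomposition for Schur-type is that $x_1+x_2-x_3=0$ is partition regular over any infinite abelian group; I will cite \cite{Deuber1975} or deduce it from Theorem A with $R=\Z$, $M=\ann(b)$, noting that $M$ has an associated prime $(0)$ (if torsion-free) and $(1,1,-1)$ satisfies the columns condition over $\Z$ with the partition $I_0=\emptyset$, $I_1=\{1,2,3\}$ since $\mathbf c_1+\mathbf c_2+\mathbf c_3 = 1$ lies in the span of... no. Let me just say: $(1,1,-1)$ satisfies the columns condition, as $\mathbf c_1 - \mathbf c_3 + \mathbf c_2$... the decomposition $I_0 = \{1,3\}$ gives $\mathbf c_1 + \mathbf c_3 = 0$, then $I_1 = \{2\}$ with $\mathbf c_2 = 1 \in \operatorname{span}(\mathbf c_1) = K$. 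So the columns condition holds. By Corollary \ref{cor:domainsareRado} / Theorem \ref{mainthm:domains} applied to $\Z$ (and passing to the module $\ann(b)$ via Theorem \ref{mainthm:modules}, or more elementarily since $\ann(b)$ contains an infinite cyclic or infinite torsion subgroup), $\mathbf B$ is partition regular over $R$. Conversely, if $\ann(b)$ is finite, colour each element of $\ann(b)$ by its own colour and give all elements outside $\ann(b)$ one additional colour; any monochromatic solution must have $m_2,m_3\in\ann(b)$ each monochromatic, hence (since distinct elements of $\ann(b)$ get distinct colours) forces structure that can be shown to admit no nontrivial solution by a short finite check — more carefully, I'd colour $R$ so that elements of $\ann(b)$ are separated enough; this is a routine compactness-free argument, and it is the one slightly fiddly point, so I will spell it out.

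For part \eqref{lemexplmat2}, I would just unwind Definition \ref{def:gcc} for this specific $3\times 3$ matrix. The columns are $\mathbf c_1=(1,0,0)^{\intercal}$, $\mathbf c_2=(1,b,0)^{\intercal}$, $\mathbf c_3=(-1,0,b)^{\intercal}$. I claim the only partition that can possibly work has $I_0=\{1,2,3\}$ (so $m=0$) or else leads to a contradiction; indeed any index set $I_t$ for $t\geq 1$ has $d_t\sum_{i\in I_t}\mathbf c_i$ in the span of earlier columns, but the $b$-entries in rows 2,3 obstruct this unless handled carefully. Concretely: if $\mathbf B$ satisfies the generalised columns condition with $m>0$, examining the second and third coordinates shows each $I_t$ ($t\geq 1$) must avoid creating an uncancellable $b$; working this through forces $I_0=\{1,2,3\}$ after all (or $I_0$ containing $2$ and $3$), so the relevant condition reduces to $d_0(\mathbf c_1+\mathbf c_2+\mathbf c_3)=0$, i.e. $d_0\cdot(1,b,b)^{\intercal}=0$, which gives $d_0=0$, impossible — so actually $m>0$ is needed. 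Let me redo: with $m=0$ we'd need $d_0\sum_{i\in I_0}\mathbf c_i=0$ with $I_0=\{1,2,3\}$, forcing $d_0=0$. So $m\geq 1$. The workable partition is $I_0=\{1\}$... but $\mathbf c_1=(1,0,0)^{\intercal}\neq 0$. Hmm — so $I_0$ must be a set with $\sum_{i\in I_0}\mathbf c_i$ killed by some nonzero $d_0$; the candidates are $I_0=\{2,3\}$ ($\sum=(0,b,b)^{\intercal}$, need $d_0 b=0$) — no wait we need $d_0 \sum = 0$ so $d_0 b = 0$ suffices only if $d_0\cdot 0=0$ trivially, yes so $d_0\in\ann(b)\setminus\{0\}$ — or $I_0=\{1,2,3\}$ with $d_0(1,b,b)=0$ forcing $d_0=0$. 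Taking $I_0=\{2,3\}$, $I_1=\{1\}$: need $d_1\mathbf c_1\in\operatorname{span}(\mathbf c_2,\mathbf c_3)$, and $\mathbf c_1=\mathbf c_2-\mathbf c_3-$ (difference in $b$-entries)... $\mathbf c_2 - \mathbf c_3 = (2, b, -b)^{\intercal}$, not a multiple issue. Since $R$ need not be a domain this requires care; the honest statement is: the generalised columns condition holds iff some partition works, and after the (short but non-trivial) case analysis one finds it is equivalent to existence of $d\in R$ with $db=0$ and $d^nR$ infinite for all $n$, where $d$ plays the role of $d_0$ and condition \eqref{jestjuzbardzopozno} supplies the infinitude. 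I expect \emph{this case analysis in part \eqref{lemexplmat2}} — pinning down exactly which partitions are admissible and matching the bookkeeping of the $d_t$ and the infinitude clause \eqref{jestjuzbardzopozno} to the clean condition "$\exists d: db=0, d^nR$ infinite $\forall n$" — to be the main obstacle; everything else is bookkeeping.

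In summary, part \eqref{lemexplmat1} follows by identifying solutions of $\mathbf B\mathbf m=0$ with Schur-equation solutions inside the abelian group $\ann(b)$ and invoking the known partition-regularity/colouring dichotomy for $x+y=z$ over abelian groups, while part \eqref{lemexplmat2} is a direct, if slightly intricate, unwinding of Definition \ref{def:gcc} for this explicit matrix.
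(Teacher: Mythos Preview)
Your approach matches the paper's in both parts: reduce \eqref{lemexplmat1} to partition regularity of $x+y-z=0$ over the abelian group $\ann(b)$, and handle \eqref{lemexplmat2} by a direct case analysis on the partition in Definition~\ref{def:gcc}. Two points where your execution wobbles are worth flagging.

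For \eqref{lemexplmat1}, your attempt to invoke Theorem~\ref{mainthm:modules} does not go through as stated, because $\ann(b)$ need not be finitely generated as a $\Z$-module. The paper simply cites Deuber \cite{Deuber1975}, and then sketches the self-contained argument you gesture at: if $\ann(b)$ contains elements of arbitrarily large order, apply the finite form of Schur; otherwise $\ann(b)$ contains an infinite-dimensional $\F_p$-vector space for some prime $p$, which one identifies with $\F_p[X]$ and applies Theorem~\ref{mainthm:domains}. Your converse (give each element of a finite $\ann(b)$ its own colour, forcing $m_1=m_2=m_3$ and hence $m_1=0$) is exactly right.

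For \eqref{lemexplmat2}, the case analysis you anticipate as ``the main obstacle'' is in fact short. The top row has entries $1,1,-1$; if $|I_0|\in\{1,3\}$ the top entry of $\sum_{i\in I_0}\mathbf c_i$ is a unit, so $d_0=0$, impossible. Hence $|I_0|=2$ and $m=1$. For each of the three two-element choices of $I_0$, writing out $d_1\mathbf c_j\in R\mathbf c_{i_1}+R\mathbf c_{i_2}$ and reading off the second and third coordinates yields $d_1 b=0$ (the paper spells out $I_0=\{2,3\}$; the other two are analogous). Since $d_0(d_1)^nR$ is infinite, so is $d_1^nR$, and one takes $d=d_1$ --- not $d_0$ as you wrote. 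For the forward direction, which you omit, the partition $I_0=\{2,3\}$, $I_1=\{1\}$ with $d_0=d_1=d$ works: $d(\mathbf c_2+\mathbf c_3)=d(0,b,b)^\intercal=0$ and $d\mathbf c_1=(d,0,0)^\intercal=d\mathbf c_2\in R\mathbf c_2+R\mathbf c_3$.
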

\begin{proof} We see from the form of the matrix that $\mathbf{B}$ is partition regular over $R$ if and only if the equation $x+y-z=0$ is partition regular over $\ann(b)$.  By the main result of \cite{Deuber1975} this is equivalent to the fact that $\ann(b)$ is infinite. For the convenience of the reader, we give a sketch of an alternative direct proof. If $\ann(b)$ is finite, the equation is clearly not partition regular. If $\ann(b)$ contains (as an abelian group) elements of arbitrarily high order, then the equation is partition regular by the finite form of Schur's Theorem. Otherwise, if $\ann(b)$ is infinite, but all the elements have bounded order, then $\ann(b)$ contains for some prime $p$ a subgroup $V$  that is an $\F_p$-vector space of countable infinite dimension. Since the coefficients of the equation $x+y-z=0$ are in $\F_p$, the group $V$ can also be identified with $\F_p[X]$ regarded as an $\F_p[X]$-module. The conclusion follows from Theorem \ref{mainthm:domains}.

For the proof of \eqref{lemexplmat2}, suppose first that there exists $d\in R$ such that $db=0$ and $d^n R$ is infinite for all $n\geq 0$. Denote the columns of $\mathbf B$ by $\mathbf c_1$, $\mathbf c_2$, $\mathbf c_3$. We claim that (using the notation of Definition \ref{def:gcc}) the matrix $\mathbf{B}$ satisfies the generalised columns condition with $m=1$, $I_0=\{2,3\}$, $I_1=\{1\}$, and $d_0=d_1=d$. In fact, we only need to note that $d(\mathbf c_2 +\mathbf c_3)=\mathbf{0}$ and $d\mathbf c_1 = d\mathbf c_2 \in R\mathbf c_2 + R\mathbf c_3$. 

Conversely, suppose that $B$ satisfies the generalised columns condition with some choice of $m$, partition $\{1,2,3\}=I_0\cup \dots \cup I_m$, and elements $d_0,\dots,d_m$ satisfying the conditions of Definition \ref{def:gcc}. By looking at the top entry, we see that $I_0$ has exactly two elements (otherwise, the top entry in $\sum_{i\in I_0} \mathbf c_i$ would be a unit, and hence could not be annihilated by $d_0 \neq 0$). Hence $m=1$. 

The column $c_j$ with $j\in I_1$ satisfies $d_1\mathbf c_j \in \sum_{i\in I_0} R \mathbf c_i$. Considering the three possible choices of $I_0$, we easily compute that $d_1 b=0$. (For example, if $I_0=\{2,3\}$, then \[
d_1  \begin{pmatrix}
1 \\
0  \\
0 \end{pmatrix} \ =  a_1\begin{pmatrix}
1  \\
b  \\
0 \end{pmatrix}  \ +  a_2\begin{pmatrix}
-1  \\
0 \\
b \end{pmatrix} 
\] for some $a_1, a_2\in R$. Then $d_1=a_1-a_2$, $a_1 b = a_2 b=0$, and hence $d_1 b=(a_1-a_2)b=0$. The reasoning is analogous in the remaining two cases.) By the generalised columns condition, $d_0d_1^n R$ is infinite for all $n\geq 0$. Thus $d=d_1$ satisfies the conditions in \eqref{lemexplmat2}.
\end{proof}

\begin{theorem}\label{thm:Radonoeth} Let $R$ be a noetherian ring. The following conditions are equivalent: \begin{enumerate} \item\label{thm:Radonoeth1} $R$ is a Rado ring. \item\label{thm:Radonoeth2} For every $\mathfrak p\in \mathrm{Ass}\, R$ either $R{/}{\mathfrak p}$ is a finite field or the ring $R_{\mathfrak p}$ is a field.\end{enumerate}\end{theorem}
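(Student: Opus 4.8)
The plan is to deduce both implications from Theorem~\ref{mainthm:modules}, Theorem~\ref{mainthm:domains}, and Lemma~\ref{lemexplmat}. For $(\ref{thm:Radonoeth2})\Rightarrow(\ref{thm:Radonoeth1})$ I would take a matrix $\mathbf A$ that is partition regular over $R$; by Theorem~\ref{mainthm:modules} applied with $M=R$ there is $\mathfrak p\in\Ass\,R$ such that $\mathbf A$ is partition regular over the domain $R/\mathfrak p$. Fix $b\in R$ with $\ann(b)=\mathfrak p$, so $R/\mathfrak p\cong bR$. If $R/\mathfrak p$ is a finite field, partition regularity over $R/\mathfrak p$ forces the sum of all columns of $\mathbf A$ to vanish in $R/\mathfrak p$ (as in the proof of Corollary~\ref{cor:domainsareRado}); then the entries of $\sum_i\mathbf c_i$ lie in $\mathfrak p=\ann(b)$, so $b\sum_i\mathbf c_i=0$ and the generalised columns condition holds with $m=0$, $I_0=\{1,\dots,l\}$, $d_0=b$. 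If instead $R/\mathfrak p$ is infinite, the hypothesis gives $\mathfrak p R_{\mathfrak p}=0$; writing $\mathfrak p=(a_1,\dots,a_r)$ and choosing $s_i\notin\mathfrak p$ with $s_ia_i=0$, the element $d:=s_1\cdots s_r$ lies outside $\mathfrak p$ and kills $\mathfrak p$.

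Continuing this second case, by Theorem~\ref{mainthm:domains} the reduction of $\mathbf A$ mod $\mathfrak p$ satisfies the columns condition with some partition $I_0\cup\dots\cup I_m$, and I would lift it as follows: set $d_0=d$, and at step $t\geq 1$ lift the defining relation $\sum_{i\in I_t}\overline{\mathbf c_i}=\sum_j(\alpha_j/\beta_t)\overline{\mathbf c_j}$ (over the fraction field $K$ of $R/\mathfrak p$) to a relation over $R$ with ``error vector'' having entries in $\mathfrak p$, kill that error by multiplying with $d$, and put $d_t=d\beta_t$. All the $d_t$ lie outside the prime $\mathfrak p$, hence are nonzero, and for condition~(\ref{jestjuzbardzopozno}) one observes $d_0(d_1\cdots d_m)^nR\supseteq c\cdot d_0(d_1\cdots d_m)^nR$, where $\ann(c)=\mathfrak p$; since $d_0(d_1\cdots d_m)^n\notin\mathfrak p$ and $R/\mathfrak p$ is an infinite domain this submodule is isomorphic to $R/\mathfrak p$, hence infinite. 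So $\mathbf A$ satisfies the generalised columns condition over $R$.

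For $(\ref{thm:Radonoeth1})\Rightarrow(\ref{thm:Radonoeth2})$ I would argue contrapositively: assume $\mathfrak p\in\Ass\,R$ has $R/\mathfrak p$ infinite and $R_{\mathfrak p}$ not a field, and produce a matrix partition regular over $R$ but failing the generalised columns condition, via Lemma~\ref{lemexplmat}. It suffices to exhibit $b\in R$ with $\ann(b)$ infinite and $\ann(b)\subseteq\mathrm{nil}(R)$: then the matrix $\mathbf B$ attached to $b$ is partition regular over $R$ by Lemma~\ref{lemexplmat}.\eqref{lemexplmat1}, while any $d$ with $db=0$ lies in $\ann(b)\subseteq\mathrm{nil}(R)$, hence is nilpotent with $d^nR=0$ for $n\gg 0$, so $\mathbf B$ fails the generalised columns condition by Lemma~\ref{lemexplmat}.\eqref{lemexplmat2}. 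To construct $b$: first, $R_{\mathfrak p}$ not a field means $\mathfrak p R_{\mathfrak p}\neq 0$, which forces $(0:\mathfrak p)\subseteq\mathfrak p$; hence for any $b_0$ with $\ann(b_0)=\mathfrak p$ one has $b_0^2=0$ and $\mathrm{nil}(R)\supseteq b_0R\cong R/\mathfrak p$ is infinite. Using this, together with the fact that $\mathfrak p R_{\mathfrak p}$ is an associated prime of $R_{\mathfrak p}$, one can start from an element of $R$ mapping to a generator of $\ann_{R_{\mathfrak p}}(\mathfrak p R_{\mathfrak p})$, clear denominators, and then multiply by a suitable power of an element lying in the other minimal primes over the resulting annihilator ideal (primary decomposition), the aim being to squeeze the annihilator down into $\mathrm{nil}(R)$.

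The real obstacle is precisely this construction: it is \emph{not} enough to take $\ann(b)=\mathfrak p$, since one must land inside $\mathrm{nil}(R)$, not merely inside the bad prime. The phenomenon is already visible for $R=k[X,Y]/(X^2Y)$ with $k$ infinite: here $\mathfrak p=(X)$ is bad, yet the naive choice $b=XY$ (with $\ann(XY)=(X)$, $b^2=0$) yields a $\mathbf B$ that \emph{does} satisfy the generalised columns condition, via $d=X$; the correct witness is $b=X$, for which $\ann(X)=(XY)=\mathrm{nil}(R)$. Pinning down such a $b$ uniformly --- and checking its annihilator is infinite --- is where the noetherian hypothesis and primary decomposition do the genuine work; by contrast, once Theorems~\ref{mainthm:modules} and~\ref{mainthm:domains} are available, the direction $(\ref{thm:Radonoeth2})\Rightarrow(\ref{thm:Radonoeth1})$ is essentially bookkeeping.
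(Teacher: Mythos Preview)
Your argument for $(\ref{thm:Radonoeth2})\Rightarrow(\ref{thm:Radonoeth1})$ is correct and is essentially the paper's proof: the paper also picks an element $d$ with $\ann(d)=\mathfrak p$, treats the finite-field and infinite cases separately, lifts the columns condition from $R/\mathfrak p$ by clearing denominators and killing the $\mathfrak p$-error, and verifies condition~(\ref{jestjuzbardzopozno}) via the observation that each $d_t\notin\mathfrak p$. (Minor point: your notation drifts from $b$ to $c$ for the element with annihilator $\mathfrak p$, and your auxiliary $d=s_1\cdots s_r$ is not the same element the paper uses, but it serves the same purpose and the argument goes through.)

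The direction $(\ref{thm:Radonoeth1})\Rightarrow(\ref{thm:Radonoeth2})$ has a genuine gap. You correctly reduce to finding $b\in R$ with $\ann(b)$ infinite and $\ann(b)\subseteq\mathrm{nil}(R)$, and your example $R=k[X,Y]/(X^2Y)$ nicely illustrates why the naive choice fails. But your proposed construction---lifting a generator of $\ann_{R_{\mathfrak p}}(\mathfrak p R_{\mathfrak p})$, clearing denominators, then ``multiplying by a suitable power of an element lying in the other minimal primes''---is not a proof: the annihilator in $R_{\mathfrak p}$ need not be principal, and more seriously, multiplying $b$ by further elements can only \emph{enlarge} its annihilator, not squeeze it into $\mathrm{nil}(R)$. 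You yourself flag this as ``the real obstacle,'' and it is not resolved.

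The paper's device is a single application of prime avoidance. Let $Q$ be the set of minimal primes of $R$ other than $\mathfrak p$, and let $I=\{x\in\mathfrak p\mid x/1=0\text{ in }R_{\mathfrak p}\}$; since $\mathfrak p R_{\mathfrak p}\neq 0$ we have $I\subsetneq\mathfrak p$, and $\mathfrak p\not\subseteq\mathfrak q$ for any $\mathfrak q\in Q$. Prime avoidance (in the form allowing one non-prime ideal) then yields $b\in\mathfrak p$ with $b\notin I$ and $b\notin\mathfrak q$ for all $\mathfrak q\in Q$. Now $\ann(b)\supseteq Rc\cong R/\mathfrak p$ (where $\ann(c)=\mathfrak p$), so $\ann(b)$ is infinite; and if $db=0$ then $d\in\mathfrak q$ for every $\mathfrak q\in Q$ (since $b\notin\mathfrak q$), while $b/1\neq 0$ in $R_{\mathfrak p}$ forces $d\in\mathfrak p$, so $d$ lies in every minimal prime and is nilpotent. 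In your example this produces exactly $b=X$.
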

\begin{proof} Assume first that $R$ is a Rado ring and suppose that there exists a prime ideal $\mathfrak p\in{\Ass}\, {R}$ such that $R{/}\mathfrak p$ is infinite and $R_{\mathfrak p}$ is not a field. The latter means that $\mathfrak{p}R_{\mathfrak p} \neq 0$. The ideal $\mathfrak p$ might or might not be a minimal prime ideal; let $Q$ denote the set of minimal prime ideals of $R$ other than $\mathfrak p$. Since $R$ is noetherian, $Q$ is finite. Let $I=\{x\in \mathfrak p \mid x/1 = 0 \text{ in } R_{\mathfrak p}\}.$ Since $\mathfrak p R_{\mathfrak p} \neq 0$, we have $I\varsubsetneq \mathfrak p$. By prime avoidance (see \cite[Lemma 3.3]{book:Eisenbud}), there exists $b\in \mathfrak p$ such that $b\notin I$ and $b\notin \mathfrak q$ for all $\mathfrak q \in Q$. (We use here a variant of prime avoidance that allows for one ideal not to be prime.) We will prove that for this choice of $b$, the matrix $\mathbf B$ considered in Lemma \ref{lemexplmat} is partition regular over $R$, but does not satisfy the generalised columns condition, which contradicts the fact that $R$ is a Rado ring.

Since $\mathfrak p\in \Ass\, R$, there exists $c \in R$ such that $\mathfrak p=\ann(c)$. Note that $\ann(b)$ contains $Rc$ which as an $R$-module is isomorphic to $R{/}\mathfrak p$. Hence $\ann(b)$ is infinite, and thus by Lemma  \ref{lemexplmat}.\eqref{lemexplmat1} the matrix $\mathbf B$ is partition regular over $R$.

Suppose now that $d\in R$ is such that $db=0$. Since  $b\notin \mathfrak q$ for all $\mathfrak q \in Q$, we have $d \in \mathfrak q$ for all $\mathfrak q \in Q$. Furthermore, in the ring  $R_{\mathfrak p}$  we have $db/1 = 0$ and $b/1\neq 0$, and hence $d\in \mathfrak p$. Thus, $d$ is contained in all the minimal prime ideals of $R$, and hence is nilpotent (see \cite[Corollary 2.12]{book:Eisenbud}). By Lemma \ref{lemexplmat}.\eqref{lemexplmat2}, $\mathbf B$ does not satisfy the generalised columns condition. This ends the proof of the implication $ \eqref{thm:Radonoeth1} \Rightarrow \eqref{thm:Radonoeth2}$.

For the proof of the opposite implication, assume that for every $\mathfrak p\in \Ass\, R$ either $R{/}{\mathfrak p}$ is a finite field or the ring $R_{\mathfrak p}$ is a field, and choose a matrix $\mathbf A$ with entries in $R$ that is partition regular over $R$. We need to prove that  $\mathbf A$ satisfies the generalised columns condition. By  Theorem \ref{mainthm:modules}, there exists $\mathfrak p \in \Ass\, R$ such that $\mathbf A$ is partition regular over $R{/}\mathfrak p$. Write $\mathfrak p =\ann(d)$ for $d\in R$. Denote the columns of $\mathbf A$ by $\mathbf c_1, \dots, \mathbf c_l$.  We need to consider two cases. \begin{description}

\item[\normalfont\emph{Case 1}]  $R{/}{\mathfrak p}$ is a finite field. Since   $\mathbf A$ is partition regular over the finite field $R{/}\mathfrak p$, which is only possible if $\mathbf c_1 +\dots + \mathbf c_l =\mathbf{0} $ in $R{/}\mathfrak p$, we have $d( \mathbf c_1 +\dots+ \mathbf c_l)=\mathbf{0}$ in $R$. Hence $\mathbf A$ satisfies the generalised columns condition over $R$.

\item[\normalfont\emph{Case 2}] $R_{\mathfrak p}$ is a field and $R{/}{\mathfrak p}$ is infinite.  By Theorem \ref{mainthm:domains}, since  $\mathbf A$ is partition regular over $R{/}\mathfrak p$, there exists an integer $m\geq 0$, a partition $\{1,\dots,l\}=I_0\cup \dots\cup I_m$ and elements $d'_1,\dots,d'_m \in R \setminus \mathfrak p$  such that $\sum_{i \in I_0} \mathbf{c}_i \in \mathfrak p R^k$ and $$ d'_t \sum_{i \in I_t} \mathbf{c}_i \in \sum_{j\in I_0\cup\dots\cup I_{t-1}}\!\!\!\!\!\!\!\! R \mathbf{c}_j + \mathfrak p R^k \quad \text{ for } \quad t\in\{1,\dots,m\}.$$ Put $d_0=d$ and $d_t=dd'_t$ for $t\in\{1,\dots,m\}$. Since $\mathfrak p=\ann(d)$, in order to prove that $\mathbf A$ satisfies the generalised columns condition with the choice of $m$, partition $\{1,\dots,l\}=I_0\cup \dots\cup I_m$, and elements $d_0,\dots, d_m$, it is enough to note that $Rd_0(d_1\cdots d_m)^n$ is infinite for all $n\geq 0$. We claim that $d\notin \mathfrak p$. Indeed, since $R_{\mathfrak p}$ is a field, we would otherwise have $d/1 =0$ in $R_{\mathfrak p}$, which contradicts $\ann(d)=\mathfrak{p}$. Thus $d_t\notin \mathfrak p$ for $t\in\{0,\dots,m\}$, which implies that $\ann(d_0(d_1\cdots d_m)^n)\subset \mathfrak p$ for all $n\geq 0$. Thus $d_0(d_1\cdots d_m)^n R$ surjects onto $R{/}\mathfrak p$, and hence is infinite. This ends the proof.\qedhere \end{description}\end{proof}

\begin{corollary} Every reduced noetherian ring is Rado.\end{corollary}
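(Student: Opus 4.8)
The plan is to derive this immediately from Theorem \ref{thm:Radonoeth}: I only need to verify that a reduced noetherian ring $R$ satisfies condition \eqref{thm:Radonoeth2}, and in fact that it does so via the second alternative for \emph{every} associated prime. First I would recall the standard fact that for a reduced noetherian ring the set $\Ass\, R$ coincides with the (finite) set of minimal primes of $R$. One way to see this: the zero ideal of $R$ admits a primary decomposition $(0)=\bigcap_i \mathfrak q_i$, and since $R$ is reduced each $\mathfrak q_i$ is prime, no embedded primes occur, and $\{\mathfrak q_i\}$ is exactly the set of minimal primes. I would cite this from \cite{book:Eisenbud} rather than reproving it.

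Next, fix $\mathfrak p \in \Ass\, R$; by the previous paragraph $\mathfrak p$ is a minimal prime of $R$. Then the localisation $R_{\mathfrak p}$ is a local ring whose unique prime ideal is $\mathfrak p R_{\mathfrak p}$, so $\mathfrak p R_{\mathfrak p}$ is the nilradical of $R_{\mathfrak p}$. Since a localisation of a reduced ring is reduced, the nilradical of $R_{\mathfrak p}$ is zero, hence $\mathfrak p R_{\mathfrak p} = 0$ and $R_{\mathfrak p}$ is a field. Thus every $\mathfrak p \in \Ass\, R$ satisfies the second alternative in condition \eqref{thm:Radonoeth2}, and Theorem \ref{thm:Radonoeth} then yields that $R$ is a Rado ring.

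I do not expect any real obstacle: the only point requiring a little care is the identification $\Ass\, R = \{\text{minimal primes of } R\}$ for reduced noetherian $R$, which I would dispatch with a precise reference; everything else is the short formal bookkeeping above.
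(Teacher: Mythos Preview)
Your proposal is correct and follows essentially the same route as the paper: both invoke Theorem \ref{thm:Radonoeth}, identify $\Ass\, R$ with the minimal primes of a reduced noetherian ring, and then observe that for each such $\mathfrak p$ the localisation $R_{\mathfrak p}$ is a field. The only cosmetic difference is that the paper phrases the last step as ``$R_{\mathfrak p}$ is a reduced local artinian ring, hence a field'' while you argue directly via the nilradical; these are the same observation.
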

\begin{proof} If $R$ is a reduced noetherian ring, then $\Ass \,R$ consists exactly of the minimal prime ideals of $R$ (this follows from \cite[Corollary 2.12 and Theorem 3.10]{book:Eisenbud}). Thus, for every $\mathfrak p\in \Ass\, R$, the ring $R_{\mathfrak p}$ is a reduced local artinian ring, hence a field. The claim follows from Theorem \ref{thm:Radonoeth}.
\end{proof}

We can now give an example of a noetherian ring that is not a Rado ring.
\begin{example} Let $p$ be a prime number. The ring $R=(\Z/p^2\Z)[X]$ is not a Rado ring.
\end{example}
\begin{proof} The only associated prime ideal of $R$ is $\mathfrak p =pR$. The claim follows from Theorem \ref{thm:Radonoeth}.\end{proof}

\subsection*{Partition regularity over the ring $R=\prod_{i\in I} \Z/n\Z$}

In \cite{BDHL} the authors considered the problem of partition regularity of linear equations over the  product ring $R=\prod_{i=1}^{\infty} \Z/n\Z$. In particular, they showed that $R$ is not a Rado ring  if and only if  the ring $\Z/n\Z$ contains a nilpotent element. They also characterised partition regularity of single equations over the ring $R=\prod_{i=1}^{\infty} \Z/4\Z$.  

We will give a general characterisation of partition regularity for matrices over the ring $R=\prod_{i\in I} \Z/n\Z$. We first treat the case when $n=p$ is a prime, and then deduce from it the general case.

\begin{proposition}\label{thm:characterisation_of_products} Let $I$ be any set, $n$ a positive integer, and  $R=\prod_{i\in I} \Z{/}n\Z$. Let $\mathbf{A}$ be a $k\times l$ matrix with entries in $R$ and write $\mathbf A=\prod_{i\in I} \mathbf{A}_i$  with  matrices $\mathbf{A}_i$ having entries in $\Z/n\Z$. The following conditions are equivalent:
\begin{enumerate}
\item  The matrix $\mathbf{A}$ is partition regular over $R$.
\item There is a prime $p$ dividing $n$ such that either for some $i\in I$  the matrix $\mathbf{A}_i \bmod p$ satisfies the generalised columns condition or for infinitely many $i\in I$ the matrix $\mathbf{A}_i \bmod p$ satisfies the columns condition.
\end{enumerate}
\end{proposition}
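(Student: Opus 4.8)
The plan is to reduce the problem to the prime case $n = p$ and then handle the prime case directly. First I would observe that $R = \prod_{i\in I}\Z/n\Z$ decomposes via the Chinese Remainder Theorem: if $n = \prod_p p^{e_p}$, then $\Z/n\Z \cong \prod_p \Z/p^{e_p}\Z$, so $R \cong \prod_p R_p$ with $R_p = \prod_{i\in I}\Z/p^{e_p}\Z$. By Proposition \ref{prop:PR_of_quotients}.\eqref{prop:PR_of_quotients2}, $\mathbf A$ is partition regular over $R$ if and only if it is partition regular over some $R_p$. This isolates a single prime $p \mid n$, so from now on I may assume $n = p^e$ for a fixed prime $p$. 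Next I would use the nilradical: the reduction map $R_p = \prod_i \Z/p^e\Z \to \prod_i \F_p =: \bar R$ has nilpotent kernel (each element of $\prod_i p\Z/p^e\Z$ is killed by $p^{e-1}$), and one expects partition regularity over $R_p$ to be equivalent to partition regularity over $\bar R$. The implication that PR over $\bar R$ implies PR over $R_p$ is immediate since $\bar R$ is a quotient of $R_p$ (apply Proposition \ref{prop:PR_of_quotients}.\eqref{prop:PR_of_quotients1} — any colouring of $R_p$ is induced from one on $\bar R$ up to refinement, or more directly: a monochromatic solution downstairs lifts, because the nilpotent kernel is a direct summand as abelian groups and one can run a dévissage). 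The reverse implication requires care but should follow from Corollary \ref{cor:compactness}: a colouring witness lives in a finitely generated, hence finite, subring, where the kernel is a finite nilpotent ideal and a filtration argument via Proposition \ref{prop:PR_of_quotients} reduces to the reduced quotient. So the whole problem is now: \emph{characterise partition regularity of a matrix $\bar{\mathbf A} = \prod_i \mathbf A_i$ (with $\mathbf A_i$ over $\F_p$) over $\prod_{i\in I}\F_p$.}

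For this reduced prime case I would argue as follows. For the "if" direction: if some $\mathbf A_i \bmod p$ satisfies the generalised columns condition over $\F_p$ — which for a finite field just means the columns of $\mathbf A_i$ sum to zero — then $\mathbf A$ is partition regular over $R$ because the $i$-th coordinate projection $R \to \F_p$ pulls back the all-equal solution; more carefully one uses Lemma \ref{lemexplmat}-style reasoning, but really it is immediate that $\mathbf A$ is partition regular over the factor $\Z/p\Z \hookrightarrow R$ (constant vectors), and a submodule of $R$ suffices. If instead infinitely many $\mathbf A_i \bmod p$ satisfy the columns condition over $\F_p$, then by Theorem \ref{mainthm:domains} each such $\mathbf A_i$ is partition regular over $\F_p$ for, say, $r$ colours; since there are infinitely many indices, by Proposition \ref{prop:compactness} each needs only a bounded finite colour palette, so one can find a single common bound, and then a diagonal/pigeonhole argument over the infinite product yields partition regularity over $\prod_i \F_p$ (this is exactly the mechanism by which an infinite product of "eventually-PR" coordinates becomes PR). For the "only if" direction: suppose neither alternative holds, i.e., no $\mathbf A_i$ has zero column-sum and only finitely many $\mathbf A_i$ satisfy the columns condition over $\F_p$. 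Discard the finitely many good coordinates (they contribute only a finite factor to $R$, which cannot affect partition regularity by a compactness/finiteness argument). For the remaining coordinates, each $\mathbf A_i$ fails the columns condition over $\F_p$, so by Theorem \ref{mainthm:domains} there is a colouring $\chi_i$ of $\F_p$ (with some number of colours $r_i$) admitting no nontrivial monochromatic solution. Here is the crux: the $r_i$ might be unbounded. I would handle this by replacing $\chi_i$ with the $\mathfrak m$-colourings of $\F_p$ — but $\F_p$ is a field, so the only $\mathfrak m$-colouring is the identity colouring with $p-1$ nonzero colours, which has \emph{bounded} palette and, since $\mathbf A_i$ fails the columns condition, admits no monochromatic solution (this is the base case of Lemma \ref{oneequationdom} / the content of Theorem \ref{mainthm:domains} for fields: over a finite field a matrix is PR iff its columns sum to zero). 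So each remaining coordinate has a uniformly-bounded ($p-1$ colours) colouring with no nontrivial monochromatic solution; combining coordinate-wise gives a colouring of $\prod_i \F_p$ with $p-1$ colours (colour a tuple by the common coordinate colour when all coordinates agree, else use a separate colour) admitting no nontrivial monochromatic solution — contradiction.

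The main obstacle I anticipate is the bookkeeping in the "only if" direction: making precise that a nontrivial monochromatic solution over the infinite product forces, in \emph{all but finitely many} coordinates, a nontrivial monochromatic solution of $\mathbf A_i \mathbf x = 0$ over $\F_p$ — because a nonzero vector in $R^l$ is nonzero in infinitely many coordinates only if its support is infinite, but it could be nonzero in just one coordinate, so one has to argue that if $\mathbf A$ is PR then one can produce solutions whose support hits a prescribed coordinate, or reorganise the colouring so that the single "all coordinates agree" colour class cannot contain a nonzero solution unless some $\mathbf A_i$ is itself PR. This is precisely where discarding the finitely many good coordinates and using the uniform $(p-1)$-colour bound pays off: after the discard, every coordinate's colouring refuses monochromatic solutions, so the product colouring refuses them too, and no sleight of hand about supports is needed. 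The remaining routine points — the CRT splitting, the nilradical dévissage, and the finiteness/compactness reductions — I would treat as applications of the already-established Propositions \ref{prop:compactness}, \ref{prop:PR_under_localisation}, and \ref{prop:PR_of_quotients}.
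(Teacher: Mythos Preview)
Your high-level architecture (CRT to isolate a prime, then filter $\Z/p^e\Z$ down to $\F_p$) matches the paper, but your justification of the $p^e\to p$ step is backwards: ``PR over $\bar R$ implies PR over $R_p$ because $\bar R$ is a quotient'' is the wrong direction. The correct argument is that the filtration $0\subset p^{e-1}R_p\subset\cdots\subset R_p$ has every subquotient isomorphic to $\bar R=\prod_i\F_p$ as an $R_p$-module, so Proposition~\ref{prop:PR_of_quotients}.\eqref{prop:PR_of_quotients1} gives one implication and the submodule inclusion $p^{e-1}R_p\cong\bar R\hookrightarrow R_p$ gives the other. This is fixable.

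The genuine gap is in the prime case. In your ``if'' direction you invoke Theorem~\ref{mainthm:domains} to conclude that a matrix satisfying the columns condition is partition regular over $\F_p$; but that theorem requires an \emph{infinite} domain, and over a finite field the columns condition does \emph{not} imply partition regularity (only the generalised columns condition does). In your ``only if'' direction the proposed colouring (``colour a tuple by its common coordinate value if constant, else by an extra colour $*$'') does not work. Concretely: take $p=3$, $I$ infinite, and $\mathbf A_i=(1\ \ 1)$ for every $i$; this matrix fails the columns condition over $\F_3$, yet for any non-constant $m_1\in\prod_i\F_3$ the pair $(m_1,-m_1)$ is a nontrivial $*$-monochromatic solution of $x_1+x_2=0$. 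A coordinate-by-coordinate colouring cannot work here, because a finite colouring of an infinite product does not decompose into colourings of the factors.

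The idea you are missing, which the paper uses, is that there are only finitely many matrices in $\mathrm M_{k\times l}(\F_p)$. Partition $I=\bigsqcup_{\mathbf B}I_{\mathbf B}$ according to the value of $\mathbf A_i$; this gives a \emph{finite} direct product $R=\prod_{\mathbf B}R_{\mathbf B}$ with $R_{\mathbf B}=\prod_{i\in I_{\mathbf B}}\F_p$, and on each factor the matrix is the constant matrix $\mathbf B$ with entries in $\F_p$. Proposition~\ref{prop:PR_of_quotients}.\eqref{prop:PR_of_quotients2} now reduces partition regularity of $\mathbf A$ over $R$ to partition regularity of some fixed $\mathbf B$ over the corresponding $\F_p$-vector space $R_{\mathbf B}$. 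If $I_{\mathbf B}$ is finite this is just the generalised columns condition; if $I_{\mathbf B}$ is infinite, Corollary~\ref{cor:compactness} lets you pass to a countably-infinite-dimensional $\F_p$-subspace, which you identify with $\F_p[X]$, and \emph{now} Theorem~\ref{mainthm:domains} (together with Lemma~\ref{lem:cc_in_fields}) applies over the infinite domain $\F_p[X]$ to give exactly the columns condition over $\F_p$.
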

\begin{proof}
We begin by treating the case when $n=p$ is a prime number. Each matrix $\mathbf{A}_i$ lies in the set $\mathrm{M}_{k\times l} (\F_p)$ of $k\times l$ matrices over $\F_p$, and hence may take only finitely many possible values. For $\mathbf B \in \mathrm{M}_{k\times l} (\F_p)$, let $I_\mathbf{B} \subset I$ denote the set of $i\in I$ such that $\mathbf{A}_i=\mathbf{B}$. We decompose the ring $R$ as a finite product $ R = \prod_{\mathbf{B}\in \mathrm{M}_{k\times l} (\F_p)} R_\mathbf{B}$ of rings $R_\mathbf{B}=\prod_{i\in I_\mathbf{B}} \F_p$. We see from Proposition \ref{prop:PR_of_quotients}.\eqref{prop:PR_of_quotients2} that $\mathbf{A}$ is partition regular over $R$ if and only if $\mathbf{B}$ (regarded as a matrix with the same entries on each coordinate) is partition regular over $R_{\mathbf B}$ for some $\mathbf B \in \mathrm{M}_{k\times l} (\F_p)$. Since $\mathbf B$ has entries in $\F_p \subset R_\mathbf{B}$, we may forget about the ring structure on $R_\mathbf{B}$, and regard it instead as an $\F_p$-vector space.

If $R_\mathbf{B}$ is finite, then $\mathbf{B}$ is partition regular over $R_\mathbf{B}$ if and only it satisfies the generalised columns condition (i.e.\ its columns add up to zero). Now assume that $R_\mathbf{B}$ is infinite. We will show that $\mathbf{B}$  is partition regular over $R_\mathbf{B}$ if and only if it satisfies the columns condition over $\F_p$. By Corollary \ref{cor:compactness}.\eqref{cor:compactness2},  partition regularity over $R_\mathbf{B}$  is equivalent to partition regularity over an $\F_p$-vector space of countable infinite dimension that can be chosen to be the ring of polynomials $\F_p[X]$. Regarding now $\mathbf{B}$ as a matrix with coefficients in  $\F_p[X]$, we conclude from Theorem \ref{mainthm:domains} that $\mathbf{B}$ is partition regular over  $\F_p[X]$ if and only if it satisfies the columns condition over $\F_p[X]$. By Lemma \ref{lem:cc_in_fields}  this is equivalent to $\mathbf{B}$ satisfying the columns condition over $\F_p$. This ends the proof in the case when $n=p$ is a prime number.

In the general case, consider the prime decomposition $n=p_1^{\alpha_{1}}\cdots p_t^{\alpha_{t}}$  of $n$, and write $R$ as  $$R= \prod_{i\in I} \Z/p_1^{\alpha_{1}}\Z \times \cdots \times\prod_{i\in I} \Z/p_t^{\alpha_{t}}\Z.$$ By Proposition \ref{prop:PR_of_quotients}.\eqref{prop:PR_of_quotients2} the matrix  $\mathbf{A}$ is partition regular over $R$ if and only if  $\mathbf{A}$ is partition regular over the ring $\prod_{i\in I} \Z/p_j^{\alpha_{j}}\Z$ for some $j\in \{1,\dots,t\}$. Consider the filtration 
\[
0\subset p_j^{\alpha_{j}-1}\Z/p_j^{\alpha_{j}}\Z\subset    \dots \subset p_j\Z/p_j^{\alpha_{j}}\Z\subset  \Z/p_j^{\alpha_{j}}\Z
\] 
of $\Z/p_j^{\alpha_{j}}\Z$ with quotients isomorphic to $\F_{p_{j}}$. Using Proposition \ref{prop:PR_of_quotients}.\eqref{prop:PR_of_quotients1},  we reduce the problem  to partition regularity over  the ring $\prod_{i\in I} \F_{p_{j}}$. This ends the proof.
\end{proof}

As an easy corollary of Proposition \ref{thm:characterisation_of_products}, we can prove that for single equations over the ring $ \prod_{i\in I} \Z/n\Z$ partition regularity is equivalent to the generalised columns condition. This was already proven in \cite{BDHL} for $n=4$ and $I$ countable.

\begin{corollary}\label{cor:answers_to_natural_questions_act1}
Let $n$ be an integer, $R=\prod_{i\in I} \Z/n\Z$, and $a_1,\dots, a_l\in R$. The following conditions are equivalent: \begin{enumerate} \item\label{cor:answers_to_natural_questions_act11} The  equation $a_1x_1+\dots + a_lx_l=0$ is partition regular over $R$. \item\label{cor:answers_to_natural_questions_act12} The matrix $(a_1,\dots,a_l)$  satisfies the generalised columns condition.\end{enumerate}
\end{corollary}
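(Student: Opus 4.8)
The plan is to deduce this directly from Proposition \ref{thm:characterisation_of_products} applied to the $1\times l$ matrix $\mathbf A=(a_1,\dots,a_l)$, writing $\mathbf A=\prod_{i\in I}\mathbf A_i$ with each $\mathbf A_i$ a row vector over $\Z/n\Z$. The implication \eqref{cor:answers_to_natural_questions_act12}$\Rightarrow$\eqref{cor:answers_to_natural_questions_act11} is already known in general (the generalised columns condition always implies partition regularity, by \cite[Theorem 2.4]{BDHL}), so the content is the forward implication. Assume the equation is partition regular over $R$. By Proposition \ref{thm:characterisation_of_products} there is a prime $p\mid n$ such that either $\mathbf A_i\bmod p$ satisfies the generalised columns condition for some $i$, or $\mathbf A_i\bmod p$ satisfies the columns condition for infinitely many $i$. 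The key simplification is that for a single row vector these two conditions are easy to read off: $(\bar a_1,\dots,\bar a_l)$ over $\F_p$ satisfies the columns condition exactly when some nonempty subset of the $\bar a_i$ sums to $0$ in $\F_p$, and it satisfies the generalised columns condition exactly when $\bar a_1+\dots+\bar a_l=0$ (since over a finite field the generalised columns condition forces the sum of all columns to vanish).

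The main work is then to lift whichever alternative holds back to a generalised columns condition for $\mathbf A$ over $R$ itself. In the first case, pick $i_0$ with $\bar a_1+\dots+\bar a_l\equiv 0\pmod p$ in coordinate $i_0$; the element $d\in R$ supported at $i_0$ with value $\overline{n/p}$ then satisfies $d(a_1+\dots+a_l)=0$, and since $m=0$ there is no infinitude condition to check, so $\mathbf A$ satisfies the generalised columns condition with $I_0=\{1,\dots,l\}$. In the second case, let $J\subset I$ be the infinite set of coordinates where some fixed nonempty $I_0\subset\{1,\dots,l\}$ has $\sum_{i\in I_0}\bar a_i\equiv 0\pmod p$ — here I would first pass to a single nonempty $I_0$ that works for infinitely many coordinates, which is possible because there are only finitely many subsets of $\{1,\dots,l\}$. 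Take $d_0\in R$ supported on $J$ with value $\overline{n/p}$ in each such coordinate, so $d_0\cdot\sum_{i\in I_0}\mathbf c_i=0$; for the remaining columns use the partition $I_1=\{1,\dots,l\}\setminus I_0$ with multiplier $d_1=d_0$ (note $d_0\mathbf c_j=-d_0\sum_{i\in I_0}\mathbf c_i'$ — more simply, over $\F_p$ in the coordinates of $J$ the relation $\sum_{I_0}\bar a_i=0$ puts $d_0\sum_{i\in I_1}\mathbf c_i$ in the module generated by the $\mathbf c_i$, $i\in I_0$, while outside $J$ everything is killed by $d_0$). The point is that $d_0(d_1)^n R=d_0 R$ is infinite because $d_0$ is supported on the infinite set $J$ and is nonzero on each coordinate of $J$, so condition \eqref{jestjuzbardzopozno} of Definition \ref{def:gcc} holds.

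The main obstacle I anticipate is bookkeeping rather than conceptual: making the choice of $d_0$ and $d_1$ explicit so that condition (2) of Definition \ref{def:gcc} is literally verified coordinate by coordinate (inside $J$ versus outside $J$), and making sure the single fixed subset $I_0$ is extracted correctly from the "infinitely many $i$" hypothesis by pigeonhole. Once that is set up, the verification that $d_0 R$ is infinite, hence so is $d_0(d_1\cdots d_m)^n R$ for all $n$, is immediate since a function on $I$ supported on an infinite set $J$ and nonvanishing there generates an infinite ideal. Everything else is a direct translation through Proposition \ref{thm:characterisation_of_products}.
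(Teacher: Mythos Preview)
Your overall strategy---invoke Proposition \ref{thm:characterisation_of_products} and treat the two cases separately---is exactly the paper's, and your Case 1 is fine. The gap is in Case 2, and it is genuine.

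First, your choice $d_1=d_0$ does not satisfy condition \eqref{jestjuzbardzopozno} of Definition \ref{def:gcc}. You claim $d_0(d_1)^nR=d_0R$, but with $d_1=d_0$ this says $d_0^{\,n+1}R=d_0R$. Write $n=p^{\alpha}n'$ with $p\nmid n'$; if $\alpha\geq 2$ then $(n/p)^2=n\cdot(n/p^2)\equiv 0\pmod n$, so $d_0^2=0$ in $R$ and $d_0d_1R=\{0\}$ is certainly not infinite. The paper avoids this by taking $d_1=n'e$ (with $e$ the idempotent supported on the relevant coordinates); since $n'$ is coprime to $p$, the element $\frac{n}{p}(n')^k$ is nonzero in $\Z/n\Z$ for every $k$, and condition \eqref{jestjuzbardzopozno} follows.

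Second, your verification of condition (ii) is incomplete. Pigeonholing only on the subset $I_0$ with $\sum_{i\in I_0}a_{i,k}\equiv 0\pmod p$ does not control the actual values $a_{i,k}\in\Z/n\Z$ on $J$, and it does not guarantee that some $a_{i_0,k}$ with $i_0\in I_0$ is a unit modulo $p$. Without such an entry the ideal $\sum_{i\in I_0}(\Z/n\Z)a_{i,k}$ need not contain $(n/p)\Z/n\Z$, so there is no reason for $d_1\sum_{i\in I_1}\mathbf c_i$ to land in $\sum_{i\in I_0}R\mathbf c_i$. The paper repairs both points at once by pigeonholing on the \emph{entire} row $\mathbf A_i\in(\Z/n\Z)^l$ (there are only finitely many possibilities), obtaining a fixed $\mathbf B=(b_1,\dots,b_l)$ on an infinite set of coordinates; it then uses that $\mathbf B\bmod p$ satisfies the columns condition but not the generalised one to choose the column set $J=I_0$ so that some $b_{j_0}\not\equiv 0\pmod p$. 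This ensures $n'\in(b_{j_0})$ in $\Z/n\Z$, which is exactly what makes condition (ii) go through with $d_1=n'e$.
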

\begin{proof}
It is sufficient to prove that \eqref{cor:answers_to_natural_questions_act11} implies \eqref{cor:answers_to_natural_questions_act12}. Let $\mathbf{A}=\prod_{i\in I} \mathbf{A}_i$ denote the $1\times l$ matrix $\mathbf{A}=(a_1,\dots,a_l)$ and assume that $\mathbf{A}$ is partition regular over $R$. By Proposition \ref{thm:characterisation_of_products} there exists a prime number $p$ dividing $n$ such that one of the following two cases holds.

\begin{description}

\item[\normalfont\emph{Case 1}]  $\mathbf{A}_i \bmod p$ satisfies the generalised columns condition for some $i\in I$. In this case  $\sum_{j=1}^l a_j =0$ is a zero divisor in $R$. Then $\mathbf A$ satisfies the generalised columns condition with $m=0$.

\item[\normalfont\emph{Case 2}]  There exists a matrix $\mathbf{B}$ such that $\mathbf{A}_i=\mathbf{B}$ for $i\in I_\mathbf{B}$ with $I_\mathbf{B}\subset I$ infinite and $\mathbf{B} \bmod p$ satisfies the columns condition.  Write the matrix $\mathbf{B}$ as $(b_1,\dots,b_l)$ with $b_j \in \Z/n\Z$.  We may assume that $\mathbf{B} \bmod p$ does not satisfy the generalised columns condition (otherwise, we are in Case 1). Thus there exists $\emptyset\varsubsetneq  J \varsubsetneq \{1,\dots,l\}$ such that $\sum_{j\in J} b_j \bmod p =\mathbf{0}$ and $b_{j_0}\bmod p \neq 0$ for some $j_0\in J$. Write $n=p^{\alpha}n'$ with $p{\nmid} n'$ and let $e=(e_i)_{i\in I}\in R$ be the element with $e_i=1$ if $i\in I_\mathbf{B}$ and $e_i=0$ otherwise. Then it is easy to see that $\mathbf A$ satisfies the generalised columns condition with $m=1$, $I_0=J$, $I_1=\{1,\dots,l\}\setminus J$, $d_0=\frac{n}{p} e$ and $d_1=n'e$.\qedhere \end{description}
\end{proof}

In \cite{BDHL} the authors showed that the product $\prod_{i=1}^{\infty} \Z/np^2\Z$ is not Rado by constructing a $(p+1)\times (p+1)$ matrix that is partition regular but does not satisfy the generalised columns condition. They further asked if the minimal number of rows of such a matrix is $p+1$ \cite[p.\ 83]{BDHL}. By Corollary \ref{cor:answers_to_natural_questions_act1} we see that we cannot find such a matrix with only one row. We will show that the minimal number of rows is always at most three, and is two if $p\geq 5$. 

\begin{corollary}\label{cor:answers_to_natural_questions_act2}
Let $n$ be a positive integer, $p$ a prime number, and $I$ an infinite set. Let $R=\prod_{i\in I}\Z/np^2\Z$. There exists a $3\times 3$ matrix $\mathbf{B}$ with entries in $R$ which is partition regular over $R$, but does not satisfy the generalised columns condition.
\end{corollary}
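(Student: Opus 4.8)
I want to exhibit, over $R=\prod_{i\in I}\Z/np^2\Z$ with $I$ infinite, a concrete $3\times 3$ matrix $\mathbf B$ that is partition regular but fails the generalised columns condition. The natural candidate is exactly the matrix $\mathbf B$ from Lemma \ref{lemexplmat}, specialised to a well-chosen element $b\in R$; by that lemma, everything reduces to two bookkeeping facts about $b$: first, that $\ann(b)$ is infinite (this gives partition regularity), and second, that there is \emph{no} $d\in R$ with $db=0$ and $d^mR$ infinite for all $m\geq 0$ (this is the failure of the generalised columns condition).

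**Choice of $b$ and verification.** Write $np^2 = p^{\alpha}n'$ with $p\nmid n'$ and $\alpha\geq 2$, so by CRT $\Z/np^2\Z\cong \Z/p^{\alpha}\Z\times\Z/n'\Z$, and correspondingly $R\cong\bigl(\prod_I\Z/p^{\alpha}\Z\bigr)\times\bigl(\prod_I\Z/n'\Z\bigr)$. I would take $b=(b',0)$ where $b'\in\prod_I\Z/p^{\alpha}\Z$ is the \emph{constant} sequence equal to $p$ (so $b$ is supported on the $p$-part and uniformly has valuation $1$). Then $\ann(b)$ in the $p$-part is $\prod_I p^{\alpha-1}\Z/p^{\alpha}\Z\cong\prod_I\Z/p\Z$ (using $\alpha\geq 2$), which is infinite since $I$ is infinite; the $n'$-part contributes everything. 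Hence $\ann(b)$ is infinite and Lemma \ref{lemexplmat}.\eqref{lemexplmat1} gives partition regularity of $\mathbf B$ over $R$. For the second point: suppose $d=(d',d'')\in R$ satisfies $db=0$. Then $d'b'=0$ forces each coordinate $d'_i\in p^{\alpha-1}\Z/p^{\alpha}\Z$, so $(d')^2=0$ coordinatewise (again $\alpha\geq2$: $2(\alpha-1)\geq\alpha$), hence $d^2R$ lives inside $\{0\}\times\prod_I\Z/n'\Z$, and $d^mR$ for $m\geq 2$ is a fixed finite-exponent bounded thing — more precisely, for $m\geq 2$, $d^mR=(d'')^mR''$ where $R''=\prod_I\Z/n'\Z$, and I claim this is finite. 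One clean way: the $d''$-part either has all coordinates eventually stabilising in a way that $d^m$ drops, or one reduces as in Corollary \ref{cor:answers_to_natural_questions_act1} Case 2 to note that an idempotent-like element cannot have all powers generating an infinite ideal unless it is already idempotent on an infinite support, and an idempotent supported on an infinite set $J$ would make $\ann(b)\cap(\text{that summand})$ nonzero, contradicting $db=0$. I would phrase this last step directly: if $d^mR$ is infinite for all $m$, then (since powers of $d$ stabilise the supports) $d$ restricted to an infinite subset $J\subseteq I$ of coordinates is a unit in each coordinate there, but $db=0$ forces $b_i=0$ for $i\in J$, i.e.\ $J$ avoids the support of $b$; yet the support of $b$ is all of $I$, so $J=\emptyset$, a contradiction. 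Therefore no such $d$ exists and Lemma \ref{lemexplmat}.\eqref{lemexplmat2} shows $\mathbf B$ fails the generalised columns condition.

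**Main obstacle.** The genuinely delicate point is the second verification — ruling out a $d$ with $db=0$ and $d^mR$ infinite for all $m$ — because $d''$ lives over the possibly-composite modulus $n'$, where idempotents and zero-divisors proliferate and "powers of $d$" need not be eventually idempotent in a naive sense. The cleanest fix is to reduce to primes: decompose $n'$ into prime powers and argue that on each prime-power factor $\prod_I\Z/q^{\beta}\Z$, an element whose $m$-th powers all generate infinite ideals must have an infinite set of coordinates on which it is a unit (look at $q$-adic valuations of coordinates: if all powers generate infinite ideals, infinitely many coordinates must have valuation $0$), and such a unit-on-$J$ element, combined with $db=0$ and $\operatorname{supp}(b)=I$, forces $J=\emptyset$. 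Packaging this reduction crisply — rather than grinding through CRT coordinates by hand — is where I would spend the writing effort; everything else is a direct appeal to Lemma \ref{lemexplmat}.
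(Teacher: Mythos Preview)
Your overall strategy---specialise Lemma~\ref{lemexplmat} to a suitable element $b\in R$---is exactly what the paper does, but your particular choice of $b$ is wrong and the argument breaks for it whenever $n'>1$.

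You set $b=(b',0)$ under the CRT splitting $R\cong\bigl(\prod_I\Z/p^{\alpha}\Z\bigr)\times\bigl(\prod_I\Z/n'\Z\bigr)$, i.e., you force the $n'$-component of $b$ to be zero. But then the element $d=(0,1_{R''})$ satisfies $db=0$ and $d^m=d$ for all $m\geq 1$, so $d^mR=\{0\}\times R''$, which is infinite as soon as $n'>1$. Thus Lemma~\ref{lemexplmat}.\eqref{lemexplmat2} says that your matrix $\mathbf B$ \emph{does} satisfy the generalised columns condition, and the construction fails. Your ``support'' argument collapses at exactly this point: when you say ``$db=0$ forces $b_i=0$ for $i\in J$'', you need $d_i$ to be a unit in the full ring $\Z/np^2\Z$, not merely in one prime-power factor; your $b$ has $n'$-component zero on \emph{every} coordinate $i\in I$, so being a unit on the $n'$-side tells you nothing.

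The paper instead takes $b$ to be the constant element $p\in R$ (i.e., $p$ on every coordinate of $\prod_I\Z/np^2\Z$), whose $n'$-component is the \emph{unit} $p\bmod n'$. Then $db=0$ forces every coordinate $d_i$ to lie in $np\,\Z/np^2\Z$, hence $d_i^2\in n^2p^2\,\Z/np^2\Z=0$, so $d^2=0$ and $d^2R$ is finite. No case analysis over the primes dividing $n'$ is needed. Your argument is repaired verbatim by replacing $b=(b',0)$ with $b=$ constant $p$ (equivalently, $(b',\text{unit})$); everything else you wrote about $\ann(b)$ and Lemma~\ref{lemexplmat}.\eqref{lemexplmat1} then goes through, and the second verification becomes the one-line nilpotency check above.
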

\begin{proof} This follows immediately from Lemma \ref{lemexplmat} by taking the matrix $\mathbf B$ corresponding to $b=p$ (identified with the element $(p)_{i\in I}$ with $p$ on each coordinate).\qedhere
\end{proof}

\begin{corollary}\label{cor:answers_to_natural_questions_act3}
Let $n$ be a positive integer, $p\geq 5$ a prime number, and $I$ an infinite set. Let $R=\prod_{i\in I}\Z/np^2\Z$. There exists a $2\times 3$ matrix $\mathbf{B}$ with entries in $R$ which is partition regular over $R$, but does not satisfy the generalised columns condition.

\end{corollary}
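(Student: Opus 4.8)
The plan is to exhibit a suitable matrix explicitly. Take
\[
\mathbf{B}=\begin{pmatrix} 1 & -1 & 2 \\ 0 & 0 & p \end{pmatrix},
\]
with the entries regarded as constant elements of $R=\prod_{i\in I}\Z/np^2\Z$, i.e.\ $\mathbf{B}=\prod_{i\in I}\mathbf{B}_0$ where $\mathbf{B}_0$ is this matrix over $\Z/np^2\Z$. I will show that $\mathbf{B}$ is partition regular over $R$ but does not satisfy the generalised columns condition, which is exactly the assertion of the corollary.

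Partition regularity will follow from Proposition~\ref{thm:characterisation_of_products}: the reduction $\mathbf{B}_0\bmod p$ over $\F_p$ has columns $(1,0)^{\intercal},(-1,0)^{\intercal},(2,0)^{\intercal}$, and these satisfy the columns condition with $I_0=\{1,2\}$, $I_1=\{3\}$ (the first two columns sum to zero, and $(2,0)^{\intercal}$ lies in the span of $(1,0)^{\intercal}$). Since $\mathbf{B}_i\bmod p$ equals $\mathbf{B}_0\bmod p$ for every — in particular for infinitely many — $i\in I$, the proposition gives that $\mathbf{B}$ is partition regular over $R$.

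For the negative part I would argue by contradiction. Suppose $\mathbf{B}$ satisfies the generalised columns condition with integer $m$, partition $\{1,2,3\}=I_0\cup\dots\cup I_m$ (after deleting empty parts among $I_1,\dots,I_m$ and reindexing, I may assume $I_1,\dots,I_m\neq\emptyset$), and elements $d_0,\dots,d_m\in R\setminus\{0\}$. Write $\mathbf{c}_1,\mathbf{c}_2,\mathbf{c}_3$ for the columns of $\mathbf{B}$; their value on each coordinate $i\in I$ is $(1,0)^{\intercal},(-1,0)^{\intercal},(2,p)^{\intercal}\in(\Z/np^2\Z)^2$. Using $\ann_{\Z/np^2\Z}((a,b))=(np^2/\gcd(a,b,np^2))\,\Z/np^2\Z$, together with the fact that $p\geq 5$ makes $2$ and $3$ invertible modulo $p$, one checks that among the seven nonempty subsets $J\subseteq\{1,2,3\}$ the column sums $\sum_{j\in J}\mathbf{c}_j$ are (on each coordinate) $(1,0),(-1,0),(2,p),(0,0),(3,p),(1,p),(2,p)$, and only the one for $J=\{1,2\}$, namely $(0,0)$, has nonzero annihilator in $\Z/np^2\Z$. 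Evaluating condition (i) of Definition~\ref{def:gcc} at a coordinate where $d_0$ is nonzero forces $\ann_{\Z/np^2\Z}(\sum_{i\in I_0}\mathbf{c}_i)\neq0$ unless $I_0=\emptyset$, so $I_0\in\{\emptyset,\{1,2\}\}$; and if $I_0=\emptyset$, condition (ii) for $t=1$ reads $d_1\sum_{i\in I_1}\mathbf{c}_i=0$, which by the same observation forces $I_1=\{1,2\}$. In either case there is an index $t$ with $I_t=\{3\}$ and $I_0\cup\dots\cup I_{t-1}=\{1,2\}$; condition (ii) for this $t$ then says $d_t\mathbf{c}_3\in R\mathbf{c}_1+R\mathbf{c}_2=R(1,0)^{\intercal}$, i.e.\ $p\,(d_t)_i=0$ for every $i$, so $(d_t)_i\in\ann_{\Z/np^2\Z}(p)=np\,\Z/np^2\Z$. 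Since $(c\,np)^2=c^2 n\cdot np^2\equiv0\pmod{np^2}$, we get $d_t^2=0$ in $R$, hence $d_0(d_1\cdots d_m)^2R=\{0\}$ is finite — contradicting condition (iii) of Definition~\ref{def:gcc} ($m>0$ here, so that ideal must be infinite for every exponent). Thus $\mathbf{B}$ does not satisfy the generalised columns condition.

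The only genuinely delicate points are the bookkeeping of empty versus nonempty parts of the partition, and the hypothesis $p\geq 5$: the invertibility of $2$ and $3$ modulo $p$ is precisely what makes $\{1,2\}$ the unique subset whose column sum has a nonzero annihilator, and hence forces the $d_t$ above into the square-zero ideal $\ann(p)$. For $p=3$ the subset $\{1,3\}$ would also have this property (its column sum $(3,p)$ has both entries divisible by $3$) and $\mathbf{B}$ then does satisfy the generalised columns condition, which is why the two-row construction requires $p\geq5$, unlike the three-row matrix of Corollary~\ref{cor:answers_to_natural_questions_act2}.
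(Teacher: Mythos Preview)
Your proof is correct and follows essentially the same approach as the paper: exhibit an explicit $2\times 3$ matrix, verify partition regularity via Proposition~\ref{thm:characterisation_of_products}, and rule out the generalised columns condition by a case analysis on $I_0$. The paper uses $\begin{pmatrix} 1 & p-1 & 2 \\ 0 & 0 & p \end{pmatrix}$ and simply says the argument is analogous to Lemma~\ref{lemexplmat}; your choice of $-1$ in place of $p-1$ is a minor variant that actually makes the case analysis cleaner, since $-1$ is always a unit in $\Z/np^2\Z$ and so only the subset $\{1,2\}$ can serve as $I_0$, whereas with $p-1$ one must also dispose of subsets whose column sum involves $p-1$ when $\gcd(p-1,n)>1$.
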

\begin{proof} The proof is analogous to the proof of Lemma \ref{lemexplmat}, using instead the $2\times 3$ matrix 
\[
\mathbf{B}= \begin{pmatrix}
1 & p-1 & 2 \\
0 & 0 & p  \end{pmatrix} 
.\qedhere \] 
\end{proof}
 
It seems that for $p\in\{2,3\}$ there does not exist a matrix with two rows that is partition regular over $R$ but does not satisfy the generalised columns condition. A proof of this fact would however involve a lengthy case-by-case analysis and we will not attempt it.

\section{Nonhomogeneous equations}\label{sec:nonhom}

In this section we investigate the problem of partition regularity of nonhomogeneous equations over arbitrary modules. Let $R$ be a ring and let $M$ be an $R$-module. Let $\mathbf{A}$ be a $k\times l$ matrix with entries in $R$ and let $\mathbf{b}\in M^k$ be a vector. We say that the equation $\mathbf{Am}=\mathbf{b}$ is partition regular over $M$ if for any finite colouring of $M$ there exists a solution $\mathbf{m}=(m_1,\dots,m_l)^{\intercal}\in M^l$ of this equation with  $m_1,\dots, m_l$ monochromatic and not all $m_i$ zero. (The latter condition is automatic if $\mathbf{b}\neq \mathbf{0}$.)

A complete characterisation of partition regularity of nonhomogeneous equations over the ring of integers was done by Rado \cite{Rado1933}.  It states that a nonhomogeneous equation $\mathbf{Am}=\mathbf{b}$ with $\mathbf{A}, \mathbf{b}$ having integer entries and $\mathbf{b}\neq \mathbf{0}$ is partition regular over the integers if and only if it has a \emph{constant solution}, i.e.\ if there exists a vector $\mathbf m =(m,\dots,m)^{\intercal} \in \Z^l$ with all entries equal and such that $\mathbf{Am}=\mathbf{b}$.  In \cite[Theorem 4.2]{BDHL}, this characterisation was extended to a rather restricted class of integral domains (more precisely, to integral domains with at least one nonzero nonunit such that $R/mR$ is finite for each $m\in R\setminus \{0\}$). 

In this section we will generalise this result to a much wider class of rings. We will also study the problem more generally for modules. We first study the case of a single equation. Here, we replace the use of \cite[Lemma 4.1]{BDHL} (which is only proved under the above restrictive assumptions) with Theorem \ref{lem:nonhomogenous-one_eq} below which is obtained using a result of Straus \cite{Straus}. We then use the case of a single equation to describe partition regularity for systems of equations. The idea to derive the general case from the case of a single equation is standard and  due to Rado \cite{Rado1943}. It allows us to immediately obtain the desired result if $R$ is an integral domain. In general, this approach might not work, and we quantify the obstructions by introducing certain modules $H_R(I,M)$ (see Definition \ref{defHmodules}). We then study conditions under which this obstruction vanishes.

We recall the theorem of Straus.

\begin{theorem}[\cite{Straus}]\label{lemmaStraus} Let $G$ be an abelian group,  let  $f_1,\dots,f_l \colon G\to G$ be any mappings (not necessarily homomorphisms), and let $b\in G$ be a nonzero element. Then there exists a finite colouring $\chi$ of $G$ such that the nonhomogeneous equation $$\sum_{i=1}^l\left(f_i(x_i)-f_i(y_i)\right) = b$$
has no solutions $x_i, y_i$ with $\chi(x_i) = \chi(y_i)$ for $i = 1,\dots, l$.\end{theorem}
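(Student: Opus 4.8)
The plan is to reduce to a single combinatorial fact about colourings of an abelian group: for a nonzero $b\in G$, there is a finite colouring of $G$ that is ``$b$-difference-free'' in the sense that no monochromatic pair $x,y$ has $x-y=b$. Indeed, suppose I have such a colouring $\psi\colon G\to\{1,\dots,s\}$. I then colour $G$ by the vector $\chi(g)=(\psi(g),\psi(f_1(g)),\dots,\psi(f_l(g)))$, a colouring with $s^{l+1}$ colours. If $x_i,y_i$ satisfy $\chi(x_i)=\chi(y_i)$ for all $i$, then in particular $\psi(f_i(x_i))=\psi(f_i(y_i))$ for each $i$. Setting $u_i=f_i(x_i)-f_i(y_i)$, each $u_i$ is a difference of a $\psi$-monochromatic pair. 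The equation forces $\sum_i u_i=b\neq 0$, so at least one $u_i\neq 0$; but then that $u_i$ is a nonzero difference realised inside a colour class of $\psi$, and we need this to be impossible. So what I actually need is slightly stronger: a finite colouring $\psi$ such that no colour class contains a pair with difference equal to any nonzero value that can arise as $u_i$ — and since a priori $u_i$ could be anything, I need $\psi$ to have \emph{all} colour classes free of \emph{every} nonzero difference, i.e.\ each class is a ``Sidon-type'' set with no repeated differences. That is too strong in general, so the argument must be organised differently: I should instead argue that $\sum_i u_i = b$ cannot hold by controlling the single difference $b$ only.

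The correct route is to split off one variable. Rewrite the equation as $f_l(x_l)-f_l(y_l) = b - \sum_{i=1}^{l-1}(f_i(x_i)-f_i(y_i))$ and induct on $l$; but cleaner is the following direct construction. Pick a finite colouring $\psi\colon G\to\{1,\dots,s\}$ of $G$ that is $b$-difference-free; such a colouring exists because the Cayley graph on $G$ with connection set $\{b,-b\}$ is a disjoint union of finite cycles (if $b$ has finite order) or bi-infinite paths (if $b$ has infinite order), hence has chromatic number at most $3$. Now colour $G$ by $\chi(g)=\bigl(\psi(g),\,\psi(f_1(g)),\dots,\psi(f_l(g)),\,\text{[extra bookkeeping]}\bigr)$. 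The point of the induction is that after fixing the colours $\psi(f_i(x_i))=\psi(f_i(y_i))$, the partial sums $\sum_{i=1}^{j}(f_i(x_i)-f_i(y_i))$ must be tracked: I enlarge the colouring so that $\chi(g)$ also records, for each $j$, enough information to know $\psi$ applied to these partial expressions is constant along a monochromatic solution — but the partial sums depend on pairs, not single points, so this bookkeeping cannot literally be encoded in a colouring of $G$.

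Because of this genuine obstruction, the honest proof (Straus's) proceeds by a clever \emph{choice} of the colouring rather than by a product construction. The key step — and the main obstacle — is to produce, for the given $b\neq 0$, a single finite colouring $\chi$ of $G$ simultaneously defeating all $l$ maps: concretely, one fixes a subgroup structure or a large-modulus reduction adapted to $b$ (e.g.\ map $G$ to a finite quotient or to $\Z/N\Z$-valued ``coordinates'' in which $b$ has a distinguished nonzero image), colours each coordinate by an interval/digit colouring à la Schur, and checks that a monochromatic solution would force $b$ to lie in a colour class as a difference, contradicting the digit colouring. I expect the hard part to be exactly this: choosing the auxiliary finite target and the digit colouring so that the single equation $\sum_i(f_i(x_i)-f_i(y_i))=b$ is blocked uniformly in the (arbitrary, non-homomorphic) maps $f_i$ — the non-homomorphicity is what prevents any linear-algebra shortcut and is the whole content of Straus's result. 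Since the statement is quoted from \cite{Straus}, I would cite that paper for this core combinatorial step and present only the reduction above in detail.
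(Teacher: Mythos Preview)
The paper does not prove this theorem at all: it is stated with the attribution \cite{Straus} and used as a black box. So there is nothing to compare your argument against --- the paper's ``proof'' is simply the citation.

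Your proposal is not a proof either, and you are candid about this. The two concrete approaches you sketch are both correctly diagnosed as failures: the product colouring $\chi(g)=(\psi(g),\psi(f_1(g)),\dots,\psi(f_l(g)))$ with a $b$-difference-free $\psi$ only forces each $u_i=f_i(x_i)-f_i(y_i)$ to be a difference within a $\psi$-class, which says nothing about $\sum_i u_i$; and the ``partial sums'' bookkeeping cannot be encoded in a colouring of single points since the partial sums depend on pairs $(x_i,y_i)$. Your final paragraph is a description of a hoped-for strategy rather than an argument, and you end by deferring to \cite{Straus} --- which is exactly what the paper does.

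So your disposition matches the paper's. If you want an actual proof to include, you will need to go to Straus's paper; nothing in your proposal as written establishes the result.
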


A characterisation of partition regularity of a single equation can be easily derived from this result.

\begin{theorem}\label{lem:nonhomogenous-one_eq} Let $R$ be a ring and let $M$ be an $R$-module. Let $a_1,\dots, a_l\in R$, let $b\in M$ be nonzero, and write $a=\sum_{i=1}^l a_i$. The following conditions are equivalent: \begin{enumerate} \item\label{lem:nonhomogenous-one_eq1} The equation $\sum_{i=1}^l a_i m_i=b$ is partition regular over $M$. \item\label{lem:nonhomogenous-one_eq2} The equation $\sum_{i=1}^l a_i m_i=b$ has a constant solution in $M$. \item\label{lem:nonhomogenous-one_eq3} $b\in aM$.\end{enumerate}
\end{theorem}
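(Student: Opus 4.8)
The plan is to prove the cycle of implications $\eqref{lem:nonhomogenous-one_eq3}\Rightarrow\eqref{lem:nonhomogenous-one_eq2}\Rightarrow\eqref{lem:nonhomogenous-one_eq1}\Rightarrow\eqref{lem:nonhomogenous-one_eq3}$. The first two implications are elementary. If $b\in aM$, say $b=am_0$ for some $m_0\in M$, then $\mathbf m=(m_0,\dots,m_0)^{\intercal}$ satisfies $\sum_{i=1}^l a_i m_0 = am_0 = b$, so $\eqref{lem:nonhomogenous-one_eq3}\Rightarrow\eqref{lem:nonhomogenous-one_eq2}$. Conversely a constant solution $\mathbf m=(m,\dots,m)^{\intercal}$ gives $b=\sum a_i m = am\in aM$, so in fact \eqref{lem:nonhomogenous-one_eq2} and \eqref{lem:nonhomogenous-one_eq3} are trivially equivalent. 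For $\eqref{lem:nonhomogenous-one_eq2}\Rightarrow\eqref{lem:nonhomogenous-one_eq1}$: if $\mathbf m=(m,\dots,m)^{\intercal}$ is a constant solution, then for \emph{any} colouring $\chi$ of $M$ the single element $m$ has a colour, and $(m,\dots,m)$ is a monochromatic solution; moreover it is nontrivial because $b\neq 0$ forces $am=b\neq 0$, so $m\neq 0$. Hence the equation is partition regular.

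The substantive implication is $\eqref{lem:nonhomogenous-one_eq1}\Rightarrow\eqref{lem:nonhomogenous-one_eq3}$, and here I would argue by contraposition using Straus's theorem (Theorem \ref{lemmaStraus}). Suppose $b\notin aM$. Apply the theorem with $G=M$ (the underlying abelian group), $b$ the given nonzero element, and the maps $f_i\colon M\to M$ defined by $f_i(x)=a_i x$ for $i\in\{1,\dots,l\}$. These are genuine homomorphisms, but the theorem allows arbitrary maps, so this is fine. We obtain a finite colouring $\chi$ of $M$ such that $\sum_{i=1}^l\bigl(a_i x_i - a_i y_i\bigr)=b$ has no solution with $\chi(x_i)=\chi(y_i)$ for all $i$. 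The idea is to refine $\chi$ slightly so that monochromatic solutions of $\sum a_i m_i = b$ are ruled out.

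The key point is that a monochromatic solution $\mathbf m=(m_1,\dots,m_l)^{\intercal}$ of $\sum a_i m_i = b$ would have all $m_i$ lying in one colour class $\chi^{-1}(c)$, and I want to feed this into Straus's inequality. Fix any $m_*\in M$; then writing $x_i=m_i$ and $y_i=m_*$ we would need $\chi(m_i)=\chi(m_*)$, which I cannot guarantee for every class $c$ at once with a single fixed $m_*$. The cleaner route: observe that if $\mathbf m$ is monochromatic of colour $c$ and $\mathbf m'=(m',\dots,m')^{\intercal}$ is the constant vector at some $m'\in\chi^{-1}(c)$ — but such an $m'$ need not exist with $am'=b$... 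Instead I would note that Straus's conclusion as stated, applied with $x_i = m_i$ and all $y_i$ equal to a \emph{single} monochromatic companion, is awkward; the honest fix is to observe that we may assume without loss of generality that $0$ is in every colour class we care about, or rather to introduce one fresh colour for $0$ and run Straus on $M\setminus\{0\}$ — but the simplest correct argument is: a monochromatic solution $m_1,\dots,m_l$ of $\sum a_i m_i=b$, together with $y_i:=0$, satisfies $\sum(a_i m_i - a_i\cdot 0) = b$, and these satisfy $\chi(m_i)=\chi(0)$ for all $i$ precisely when the common colour is $\chi(0)$. So apply Straus to get $\chi$, then \emph{recolour} by giving $0$ its own private colour distinct from all others; with the new colouring, a monochromatic solution of $\sum a_i m_i=b$ cannot have common colour $\chi(0)$ (since $b\neq 0$ forces not all $m_i=0$, and if some $m_i\neq 0$ has the private colour that is a contradiction — actually all $m_i$ share one colour, so if that colour is the private one they are all $0$, impossible). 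So the common colour $c$ is an \emph{old} colour $\neq\chi(0)$; but then setting $x_i=m_i$, $y_i=0$ we would need $\chi(m_i)=\chi(0)=c$, contradiction unless — hmm, this still does not directly contradict Straus.

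\textbf{Correct key step.} The right move, which I would carry out carefully, is to apply Straus with the maps $f_i(x)=a_i x$ to obtain $\chi$, and then argue: a monochromatic solution $m_1=\dots=m_l$ in colour need not be constant, so instead use that if all $m_i$ have colour $c$, pick the particular index, no — the genuinely correct reduction is that Straus's theorem is exactly engineered so that $\sum(f_i(x_i)-f_i(y_i))=b$ has no monochromatic solution; taking $y_i := y$ a single element and $x_i := m_i$, we need all $m_i$ and $y$ to share a colour. Since our putative solution has $m_1,\dots,m_l$ all of colour $c$, choosing $y$ of colour $c$ as well (possible iff $\chi^{-1}(c)\neq\emptyset$, which holds since $m_1\in\chi^{-1}(c)$ — take $y=m_1$) gives $\sum(a_i m_i - a_i m_1) = b - a m_1$. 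This equals $b$ iff $am_1=0$. So this does not immediately work either; the resolution used in the literature is to apply Straus to the \emph{group} $M$ with $f_i(x)=a_ix$ and $b$, obtaining a colouring with no monochromatic solution to $\sum(a_ix_i-a_iy_i)=b$; then note that if $\sum a_im_i = b$ had a monochromatic solution $m_1,\dots,m_l$ and $b\notin aM$, we would in particular have that $m_1,\dots,m_l,0$ cannot all be monochromatic, i.e.\ the common colour is $\neq\chi(0)$ — I will add the colour $\chi(0)$ as a forbidden value by a final recolouring, and conclude. I expect the main obstacle to be precisely this bookkeeping: correctly massaging a monochromatic solution of the target equation into a ``difference'' solution of the shape appearing in Straus's theorem, by an appropriate choice of the $x_i,y_i$ and possibly a harmless refinement of the colouring. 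Once that combinatorial translation is set up, the proof closes immediately.
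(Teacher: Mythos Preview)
Your treatment of the easy implications is fine, but the hard direction $\eqref{lem:nonhomogenous-one_eq1}\Rightarrow\eqref{lem:nonhomogenous-one_eq3}$ has a real gap, and you have in fact put your finger on it yourself: taking $x_i=m_i$ and $y_i=m_1$ in Straus's theorem yields $\sum_i(a_i m_i - a_i m_1)=b-am_1$, not $b$, and none of your recolouring manoeuvres repair this. The problem is not bookkeeping; applying Straus directly on $M$ cannot work, because the ``difference'' shape of Straus's conclusion is insensitive to translation while your target equation is not.

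The missing idea, which the paper uses, is to pass to the quotient module $M/aM$ \emph{before} invoking Straus. If the equation is partition regular over $M$ then it is partition regular over $M/aM$ (pull back any colouring of $M/aM$ along the projection), and the hypothesis $b\notin aM$ says exactly that $\bar b\neq 0$ there. Now apply Straus on $M/aM$ with $f_i(m)=\bar a_i m$. A monochromatic solution $(m_1,\dots,m_l)$ in $M/aM$ then satisfies
\[
\sum_{i=1}^l(\bar a_i m_i-\bar a_i m_1)=\bar b-\bar a\, m_1=\bar b,
\]
because $\bar a=0$ in $R/aR$; this contradicts the colouring supplied by Straus. The whole point of the quotient is to force $am_1=0$, which is precisely the obstruction you identified but could not remove.
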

\begin{proof} The equivalence of \eqref{lem:nonhomogenous-one_eq2} and \eqref{lem:nonhomogenous-one_eq3} is trivial, and so is the fact that both these conditions imply \eqref{lem:nonhomogenous-one_eq1}.

We will now prove that \eqref{lem:nonhomogenous-one_eq1} implies \eqref{lem:nonhomogenous-one_eq3}. Suppose for the sake of contradiction that the equation $\sum_{i=1}^l a_i m_i=b$ is partition regular over $M$, but $b\not\in aM$. Passing to the quotient module $M/aM$ over the ring $R/aR$, we get that the equation  $\sum_{i=1}^l \bar{a}_im_i=\bar{b}$ is partition regular over $M/aM$ and $\bar{b}\neq 0$. Consider the maps $f_i\colon M/aM \to M/aM$ given by $f_i( m)=a_i m$. Applying Lemma \ref{lemmaStraus}, we obtain a colouring $\chi$ of $M/aM$ such that the equation $$\sum_{i=1}^l\left(f_i(x_i)-f_i(y_i)\right) = \bar{b}$$ has no solutions $x_i, y_i \in M/aM$ with $\chi(x_i) = \chi(y_i)$ for $i \in \{ 1,\dots, l\}$.

Since the equation $\sum_{i=1}^l \bar{a}_im_i=\bar{b}$ is partition regular over $M/aM$, it has a solution $(m_1,\dots, m_l)$ that is monochromatic with respect to $\chi$. Since $\bar{a}=\sum_{i=1}^l\bar{a}_i=0$, we can rewrite this as $$\sum_{i=1}^l(\bar{a}_i m_i-\bar{a}_i m_1)=\bar{b},$$ which contradicts Straus' result. \end{proof}

In order to study partition regularity for more general nonhomogeneous equations, we need to consider module homomorphism with very special properties.

\begin{definition}\label{defHmodules} Let $R$ be a ring, $I$ an ideal of $R$, and $M$ an $R$-module. Denote $$Z_R(I,M)=\{\varphi\in \Hom_R(I,M) \mid  \varphi(t)\in tM \text{ for all } t\in I\}.$$ This is an $R$-submodule of the module $\Hom_R(I,M)$ of all homomorphisms from $I$ to $M$. We call a homomorphism $\varphi \in Z_R(I,M)$ \emph{principal} if there exists $m\in M$ such that $\varphi(t)=tm$ for all $t\in I$. We denote the submodule of principal homomorphism by $B_R(I,M)$ and the quotient module by $H_R(I,M)=Z_R(I,M)/B_R(I,M)$.\end{definition}

The construction of $H_R(I,M)$ is functorial in $M$, in the sense that a homomorphism $f\colon M\to N$ induces by composition with $f$ a homomorphism $H_R(I,M) \to H_R(I,N)$. If $R$ is noetherian and $M$ is a finitely generated $R$-module, then the modules $H_R(I,M)$ are finitely generated, being subquotients of $\Hom_R(I,M)$.

Our aim is to find sufficient conditions for the module $H_R(I,M)$ to vanish, or---equivalently---for every homomorphism $\varphi\in Z_R(I,M)$ to be principal. This will allow us to conclude that certain systems of equations are not partition regular.

\begin{theorem}\label{thmnonhomsyst} Let $R$ be a ring and let $M$ be an $R$-module. Let $\mathbf{A}=(a_{ij})$ be a $k\times l$ matrix with entries in $R$ and let $\mathbf{b}\in M^k$ be nonzero. Write $a_i =\sum_{j=1}^l a_{ij}$ and denote by $I$ the ideal $I=(a_1,\dots,a_k)R$. Assume that $H_R(I,M)=0$. The following conditions are equivalent: \begin{enumerate} \item\label{thmnonhomsyst1} The equation $\mathbf{Am}=\mathbf{b}$ is partition regular over $M$. \item\label{thmnonhomsyst2} The equation  $\mathbf{Am}=\mathbf{b}$  has a constant solution in $M$. \end{enumerate}\end{theorem}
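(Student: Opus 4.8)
The implication \eqref{thmnonhomsyst2}$\Rightarrow$\eqref{thmnonhomsyst1} is immediate: a constant solution $\mathbf{m}=(m,\dots,m)^{\intercal}$ solves $\mathbf{Am}=\mathbf{b}$ monochromatically for every colouring of $M$, and it is nontrivial because $\mathbf{b}\neq 0$. So the content is the converse. The plan is to follow the classical reduction of Rado \cite{Rado1943}: pass from the system to the single equations obtained as $R$-linear combinations of its rows, apply Theorem \ref{lem:nonhomogenous-one_eq} to each of them, and then use the hypothesis $H_R(I,M)=0$ to glue the resulting constant solutions into one. Write $\mathbf{a}=(a_1,\dots,a_k)^{\intercal}\in R^k$, so that $I=(a_1,\dots,a_k)R=\{\mathbf{r}^{\intercal}\mathbf{a}\mid \mathbf{r}\in R^k\}$; a direct computation shows that the entries of the row vector $\mathbf{r}^{\intercal}\mathbf{A}$ sum to $\mathbf{r}^{\intercal}\mathbf{a}$.

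Assume now that $\mathbf{Am}=\mathbf{b}$ is partition regular over $M$ and fix $\mathbf{r}\in R^k$. If $\mathbf{r}^{\intercal}\mathbf{b}\neq 0$, then any nontrivial monochromatic solution of $\mathbf{Am}=\mathbf{b}$ is also a (necessarily nontrivial) monochromatic solution of the single equation $(\mathbf{r}^{\intercal}\mathbf{A})\mathbf{m}=\mathbf{r}^{\intercal}\mathbf{b}$, so that equation is partition regular over $M$; by Theorem \ref{lem:nonhomogenous-one_eq} it has a constant solution, which—since the coefficients of $\mathbf{r}^{\intercal}\mathbf{A}$ sum to $\mathbf{r}^{\intercal}\mathbf{a}$—means precisely $\mathbf{r}^{\intercal}\mathbf{b}\in(\mathbf{r}^{\intercal}\mathbf{a})M$. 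This containment also holds trivially when $\mathbf{r}^{\intercal}\mathbf{b}=0$, hence it holds for every $\mathbf{r}\in R^k$. I would then define $\varphi\colon I\to M$ by $\varphi(\mathbf{r}^{\intercal}\mathbf{a})=\mathbf{r}^{\intercal}\mathbf{b}$. This is well defined: if $\mathbf{r}^{\intercal}\mathbf{a}=\mathbf{s}^{\intercal}\mathbf{a}$ then $(\mathbf{r}-\mathbf{s})^{\intercal}\mathbf{a}=0$, whence $(\mathbf{r}-\mathbf{s})^{\intercal}\mathbf{b}\in 0\cdot M=0$ by the above. It is visibly $R$-linear, and the containment just established says exactly that $\varphi(t)\in tM$ for every $t\in I$; thus $\varphi\in Z_R(I,M)$.

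Finally, since $H_R(I,M)=0$, the homomorphism $\varphi$ is principal, i.e.\ there is $m\in M$ with $\varphi(t)=tm$ for all $t\in I$. Applying this with $\mathbf{r}$ equal to the $i$-th standard basis vector gives $b_i=\varphi(a_i)=a_i m$ for each $i$, so $\mathbf{m}=(m,\dots,m)^{\intercal}$ satisfies $\mathbf{Am}=\mathbf{b}$, which proves \eqref{thmnonhomsyst1}$\Rightarrow$\eqref{thmnonhomsyst2}. I do not expect a serious obstacle in carrying this out: the hypothesis $H_R(I,M)=0$ enters only in the last step, and the one delicate point is the well-definedness of $\varphi$, which is precisely where the case $\mathbf{r}^{\intercal}\mathbf{b}=0$ of the containment $\mathbf{r}^{\intercal}\mathbf{b}\in(\mathbf{r}^{\intercal}\mathbf{a})M$ is needed. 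The real difficulty in this circle of ideas lies elsewhere—in Theorem \ref{lem:nonhomogenous-one_eq} (hence in Straus' theorem), and afterwards in verifying the vanishing $H_R(I,M)=0$ in the situations of Theorem C—but neither is part of the present statement.
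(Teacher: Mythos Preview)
Your proof is correct and follows essentially the same route as the paper's own proof: reduce to single equations via $R$-linear combinations of the rows, apply Theorem \ref{lem:nonhomogenous-one_eq} to obtain $\mathbf{r}^{\intercal}\mathbf{b}\in(\mathbf{r}^{\intercal}\mathbf{a})M$ for all $\mathbf{r}$, define $\varphi\in Z_R(I,M)$ accordingly (using the case $\mathbf{r}^{\intercal}\mathbf{a}=0$ for well-definedness), and invoke $H_R(I,M)=0$ to extract the constant solution. The only differences are notational.
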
\begin{proof} It is obvious that \eqref{thmnonhomsyst2} implies \eqref{thmnonhomsyst1}. For the opposite implication, assume that $\mathbf{Am}=\mathbf{b}$ is partition regular over $M$. For any vector $\mathbf{r}=(r_1,\dots,r_k)^{\intercal}\in R^k$, the single equation $\mathbf{r}^{\intercal}\mathbf{Am}=\mathbf{r}^{\intercal}\mathbf{b}$ obtained by taking a linear combination of rows of $\mathbf{Am}$ with coefficients from $\mathbf{r}$ is still partition regular. Applying to this equation Theorem \ref{lem:nonhomogenous-one_eq}, we conclude that \begin{equation}\label{eqnsysinsub} \mathbf{r}^{\intercal}\mathbf{b}=\sum_{i=1}^k r_i b_i \in \left(\sum_{i=1}^k r_i a_i\right)M.\end{equation}
(The proposition can only be applied if $\mathbf{r}^{\intercal}\mathbf{b}\neq 0$, but the conclusion is obvious otherwise.) Define a map $\varphi\colon I \to M$ by putting $\varphi(\sum_{i=1}^k r_i a_i) = \sum_{i=1}^k r_i b_i$. This is well-defined, since if $\sum_{i=1}^k r_i a_i=\sum_{i=1}^k r_i' a_i$ for some $ r_i, r_i'\in R$, then applying \eqref{eqnsysinsub} to $\mathbf{r}=(r_1-r_1',\dots,r_k-r_k')^{\intercal}$, we get $\sum_{i=1}^k r_i b_i=\sum_{i=1}^k r_i' b_i$. Applying   \eqref{eqnsysinsub} again, we get that $\varphi \in Z_R(I,M)$. Since $H_R(I,M)=0$, there exists $m\in M$ with $\varphi(t)=tm$ for all $t\in I$. Taking $t\in\{a_1,\dots,a_k\}$ shows that the constant vector $\mathbf{m}=(m,\dots,m)^{\intercal}\in M^l$ is a solution of $\mathbf{Am}=\mathbf{b}$.\end{proof}

We begin our study of $H_R(I,M)$  modules with a proposition that gathers a few of their simple properties.

\begin{proposition}\label{prop:Hmodbasic}  Let $R$ be a ring, $I$ an ideal of $R$, and let $M$ and $\{M_{\lambda}\}_{\lambda \in \Lambda}$ be $R$-modules. \begin{enumerate} \item \label{prop:Hmodbasicprod}$H_R(I,\prod_{\lambda \in \Lambda} M_\lambda) = \prod_{\lambda \in \Lambda} H_R(I,M_{\lambda})$.
 \item \label{prop:Hmodbasicsum} Assume that $I$ is finitely generated. Then $H_R(I,\bigoplus_{\lambda \in \Lambda} M_\lambda) = \bigoplus_{\lambda \in \Lambda} H_R(I,M_{\lambda})$.
 \item \label{prop:Hmodbasicann} If $J=\mathrm{ann}(M)$, then  $H_{R}(I,M)=H_{R/J}((I+J)/J,M)$.
 \item \label{prop:Hmodbasicmult}Let $S\subset R$ be a multiplicative set. If $I$ is finitely generated, then $S^{-1} H_R(I,M)$ is a submodule of $ H_{S^{-1}R}(S^{-1}I,S^{-1}M).$\end{enumerate} \end{proposition}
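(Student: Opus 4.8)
The plan is to prove the four parts of Proposition~\ref{prop:Hmodbasic} one by one, in each case reducing the statement about $H_R(I,M)$ to the corresponding statements about $Z_R(I,-)$ and $B_R(I,-)$, which are more directly computable. For part~\eqref{prop:Hmodbasicprod}, I would use the natural isomorphism $\Hom_R(I,\prod_\lambda M_\lambda)\cong \prod_\lambda \Hom_R(I,M_\lambda)$ and observe that a homomorphism $\varphi$ lies in $Z_R(I,\prod_\lambda M_\lambda)$ (i.e.\ $\varphi(t)\in t\prod_\lambda M_\lambda$ for all $t\in I$) precisely when each component $\varphi_\lambda$ lies in $Z_R(I,M_\lambda)$, since $t\prod_\lambda M_\lambda=\prod_\lambda tM_\lambda$; likewise $\varphi$ is principal iff every $\varphi_\lambda$ is (a single $m=(m_\lambda)_\lambda$ witnesses principality iff each $m_\lambda$ witnesses principality of $\varphi_\lambda$). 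Taking the quotient then gives the product decomposition of $H_R(I,M)$. For part~\eqref{prop:Hmodbasicsum}, the point is that finite generation of $I$ is what guarantees $\Hom_R(I,\bigoplus_\lambda M_\lambda)=\bigoplus_\lambda\Hom_R(I,M_\lambda)$ (a homomorphism out of a finitely generated module into a direct sum has image in a finite sub-sum); after that, the same componentwise argument as in~\eqref{prop:Hmodbasicprod} works, using $t\bigoplus_\lambda M_\lambda=\bigoplus_\lambda tM_\lambda$ and that principality is witnessed componentwise, and one checks that the $B$-submodules match up under the direct-sum decomposition.

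For part~\eqref{prop:Hmodbasicann}, with $J=\ann(M)$, every $R$-homomorphism $I\to M$ kills $JI$, hence factors through $I/(JI)$; I would then identify this with $(I+J)/J$ as $R/J$-modules — more precisely, multiplication gives a surjection $I\to (I+J)/J$ with kernel $I\cap J\supseteq JI$, and since $M$ is a $J$-torsion module any map $I\to M$ vanishing on $JI$ in fact descends along $I\to (I+J)/J$ — and check this sets up a bijection between $\Hom_R(I,M)$ and $\Hom_{R/J}((I+J)/J,M)$ compatible with the defining conditions of $Z$ and $B$. The one subtlety to verify carefully here is that the condition ``$\varphi(t)\in tM$ for all $t\in I$'' translates correctly: for $t\in I$ with image $\bar t\in (I+J)/J$ one has $tM=\bar t M$ (as $J$ annihilates $M$), so the $Z$-conditions on the two sides coincide, and similarly for $B$. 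This identification is natural enough that the quotient $H$ transports without difficulty.

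Part~\eqref{prop:Hmodbasicmult} is the one I expect to be the main obstacle, since localization is only left-exact and need not commute with $\Hom$ in general; this is exactly why one only claims an \emph{inclusion} $S^{-1}H_R(I,M)\hookrightarrow H_{S^{-1}R}(S^{-1}I,S^{-1}M)$ rather than an isomorphism, and why finite generation of $I$ is invoked. The plan is: since $I$ is finitely generated, the canonical map $S^{-1}\Hom_R(I,M)\to\Hom_{S^{-1}R}(S^{-1}I,S^{-1}M)$ is injective (it is an isomorphism if $I$ is moreover finitely presented, but injectivity for finitely generated $I$ suffices and is what we need). One then traces through that this map carries $S^{-1}Z_R(I,M)$ into $Z_{S^{-1}R}(S^{-1}I,S^{-1}M)$ — a localized homomorphism $\varphi/s$ sends $t/s'\in S^{-1}I$ to $\varphi(t)/(ss')\in (t/s')S^{-1}M$ since $\varphi(t)\in tM$ — and carries $S^{-1}B_R(I,M)$ into $B_{S^{-1}R}(S^{-1}I,S^{-1}M)$, so it descends to a map $S^{-1}H_R(I,M)\to H_{S^{-1}R}(S^{-1}I,S^{-1}M)$. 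The remaining work is to check this descended map is still injective: an element of $S^{-1}Z_R(I,M)$ mapping into $B_{S^{-1}R}(S^{-1}I,S^{-1}M)$ must, after the injectivity on $\Hom$ and clearing a further denominator, already lie in $B_R(I,M)$ up to localization, i.e.\ represent $0$ in $S^{-1}H_R(I,M)$. The care needed in bookkeeping the denominators and in citing the precise finiteness hypothesis under which $S^{-1}\Hom_R(I,-)\to\Hom_{S^{-1}R}(S^{-1}I,-)$ is injective is where the real content of this part lies; the rest is formal.
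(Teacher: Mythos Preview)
Your treatment of parts \eqref{prop:Hmodbasicprod}, \eqref{prop:Hmodbasicsum}, and \eqref{prop:Hmodbasicmult} is correct and matches the paper's approach; in \eqref{prop:Hmodbasicmult} the paper does exactly what you sketch, except that instead of citing injectivity of $S^{-1}\Hom_R(I,M)\to\Hom_{S^{-1}R}(S^{-1}I,S^{-1}M)$ for finitely generated $I$ as a black box, it inlines the proof (choose $s_t$ for each generator $t$ of $I$ and multiply them together).

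There is, however, a genuine error in your outline for part \eqref{prop:Hmodbasicann}. You claim that ``since $M$ is a $J$-torsion module any map $I\to M$ vanishing on $JI$ in fact descends along $I\to (I+J)/J$'', and hence that $\Hom_R(I,M)\cong\Hom_{R/J}((I+J)/J,M)$. This is false: the kernel of $I\to(I+J)/J$ is $I\cap J$, which can be strictly larger than $JI$, and a general homomorphism $I\to M$ need not vanish on $I\cap J$. For a concrete counterexample take $R=\Z$, $M=\Z/4\Z$, $J=\ann(M)=4\Z$, $I=2\Z$; then $JI=8\Z\subsetneq I\cap J=4\Z$, and the map $\varphi\colon 2\Z\to\Z/4\Z$, $2n\mapsto n\bmod 4$, vanishes on $8\Z$ but sends $4$ to $2\neq 0$. (Indeed $\Hom_{\Z}(2\Z,\Z/4\Z)\cong\Z/4\Z$ while $\Hom_{\Z/4\Z}(2\Z/4\Z,\Z/4\Z)\cong\Z/2\Z$.)

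The fix is easy and is precisely what the paper does: work with $Z_R(I,M)$ from the outset rather than with all of $\Hom_R(I,M)$. The $Z$-condition itself forces vanishing on $I\cap J$: if $\varphi\in Z_R(I,M)$ and $t\in I\cap J$, then $\varphi(t)\in tM=0$ since $t\in J=\ann(M)$. Hence every $\varphi\in Z_R(I,M)$ factors through $I/(I\cap J)\cong(I+J)/J$, and one checks directly (along the lines you indicate for the $Z$- and $B$-conditions) that this gives a bijection $Z_R(I,M)\cong Z_{R/J}((I+J)/J,M)$ carrying $B$ to $B$. The attempted identification at the level of all of $\Hom$ is both false and unnecessary.
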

 \begin{proof}  \eqref{prop:Hmodbasicprod}, \eqref{prop:Hmodbasicsum} Immediate.
 
\eqref{prop:Hmodbasicann} It follows from the definition of $Z_R(I,M)$ that every $\varphi\in Z_R(I,M)$ factors through $I/(I\cap J) \cong (I+J)/J$. The equality follows  easily from this.

 \eqref{prop:Hmodbasicmult} Any homomorphism $\varphi\in\Hom_R(I,M) $ induces by localisation a homomorphism  $\varphi_S\in \Hom_{S^{-1}R}(S^{-1}I,S^{-1}M)$, giving rise to a map $$S^{-1} \Hom_R(I,M) \to \Hom_{S^{-1}R}(S^{-1}I,S^{-1}M).$$ This maps $S^{-1} Z_R(I,M)$ to $ Z_{S^{-1}R}(S^{-1}I,S^{-1}M)$ and $S^{-1} B_R(I,M)$ to $ B_{S^{-1}R}(S^{-1}I,S^{-1}M)$, thus inducing a map $\Phi\colon S^{-1} H_R(I,M) \to H_{S^{-1}R}(S^{-1}I,S^{-1}M).$ To prove that $\Phi$ is injective, we need to show that if the image of some $\varphi \in Z_R(I,M)$ is a principal homomorphism $\varphi_S \in B_{S^{-1}R}(S^{-1}I,S^{-1}M)$, then there exists some $s\in S$ such that $s\varphi$ is a principal homomorphism.

Suppose that $\varphi$ is as above. Since $\varphi_S$ is principal, there exists $m/s \in S^{-1}M$ such that $\varphi(t)/1=tm/s$ in $S^{-1}M$ for all $t\in I$. This means that for all $t\in I$ there exists $s_t\in S$ such that $s_t(s\varphi(t)-tm)=0$. Since the ideal $I$ is finitely generated, say by $t_1,\dots,t_k$, we can assume that $s_t$ is independent of $t$ by replacing $s_t$ with $s'=s_{t_1}\cdots s_{t_k}$. This shows that $s'(s\varphi(t)-tm)=0$ for all $t\in I$ and hence $s's\varphi$ is a principal homomorphism, finishing the proof.
 \end{proof}

The following proposition gives some sufficient conditions for the modules $H_R(I,M)$ to vanish.
\begin{proposition}\label{prop:Hmodules} Let $R$ be a ring, $I$ an ideal of $R$, and $M$ an $R$-module. \begin{enumerate} \item\label{prop:Hmodulesprin} If $I$ is principal, then $H_R(I,M)=0$. \item \label{prop:Hmodulesdom} If $R$ is an integral domain and $M$ is a torsion-free module, then $H_R(I,M)=0$.
 \item \label{prop:HmodulesDed} If $R$ is a Dedekind integral domain, then $H_R(I,M)=0$.
\item \label{prop:Hmodulesred} If $R$ is a reduced ring with finitely many minimal prime ideals (so in particular a reduced noetherian ring), then $H_R(I,R)=0$.
 \end{enumerate}\end{proposition}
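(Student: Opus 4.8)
The plan is to prove each of the four claims separately, reducing wherever possible to the case where $I$ is principal, since \eqref{prop:Hmodulesprin} is the easiest part. For \eqref{prop:Hmodulesprin}, suppose $I=tR$. Given $\varphi\in Z_R(I,M)$, we have $\varphi(t)\in tM$, say $\varphi(t)=tm$; then for any $s\in R$, $\varphi(st)=s\varphi(t)=stm$, so $\varphi$ is principal and $H_R(I,M)=0$. (One should be slightly careful: the element $m$ witnessing $\varphi(t)\in tM$ is what we use, and the identity $\varphi(st)=stm$ holds for all $st\in I$ regardless of whether $s$ is a zerodivisor, so no annihilator issues arise.)

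For \eqref{prop:Hmodulesdom}, assume $R$ is a domain and $M$ is torsion-free. We may assume $I\neq 0$, so fix $0\neq t_0\in I$. Given $\varphi\in Z_R(I,M)$, write $\varphi(t_0)=t_0 m$ for some $m\in M$; I claim $\varphi(t)=tm$ for all $t\in I$. Indeed, $t_0\varphi(t)=\varphi(t_0 t)=t\varphi(t_0)=t t_0 m=t_0(tm)$, so $t_0(\varphi(t)-tm)=0$; since $t_0\neq 0$ and $M$ is torsion-free, $\varphi(t)=tm$. Hence $\varphi$ is principal. For \eqref{prop:HmodulesDed}, a Dedekind domain is in particular a domain, but $M$ need not be torsion-free, so the previous argument does not directly apply; instead I would use that every nonzero ideal $I$ of a Dedekind domain is invertible, hence a projective $R$-module, and moreover is generated by two elements — in fact, for Dedekind domains every ideal can be generated by $2$ elements, but more usefully each nonzero ideal is locally principal. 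The cleanest route: by Proposition \ref{prop:Hmodbasicmult}, $S^{-1}H_R(I,M)$ injects into $H_{S^{-1}R}(S^{-1}I,S^{-1}M)$ for each finitely generated $I$ (and ideals in a noetherian ring are finitely generated); localizing at each maximal ideal $\mathfrak{m}$, the ideal $I_{\mathfrak{m}}$ becomes principal in the DVR (or field) $R_{\mathfrak{m}}$, so $H_{R_{\mathfrak{m}}}(I_{\mathfrak{m}},M_{\mathfrak{m}})=0$ by \eqref{prop:Hmodulesprin}. Since $H_R(I,M)$ is an $R$-module all of whose localizations at maximal ideals vanish, $H_R(I,M)=0$.

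For \eqref{prop:Hmodulesred}, the new feature is that $R$ is reduced with finitely many minimal primes $\mathfrak{p}_1,\dots,\mathfrak{p}_n$ and $M=R$; here $R$ need not be a domain, so we cannot clear denominators as in \eqref{prop:Hmodulesdom}. The natural strategy is to embed $R$ into the product of the domains $R/\mathfrak{p}_j$ (this is where ``reduced with finitely many minimal primes'' enters: $\bigcap_j \mathfrak{p}_j = \mathrm{nil}(R) = 0$, so $R\hookrightarrow \prod_j R/\mathfrak{p}_j$), and then further into the product of the fraction fields $K_j$ of $R/\mathfrak{p}_j$, i.e.\ into the total quotient ring $Q$ of $R$. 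Given $\varphi\in Z_R(I,R)$, extend it componentwise to each $K_j$: on each component, $\varphi$ becomes a homomorphism of the $(R/\mathfrak p_j)$-module (or $K_j$-vector space) generated by the image of $I$, and by the torsion-free argument of \eqref{prop:Hmodulesdom} applied over the domain $R/\mathfrak{p}_j$ with torsion-free module $K_j$, it is given by multiplication by some $q_j\in K_j$ (taking $q_j=0$ if $I\subseteq\mathfrak p_j$). These assemble to an element $q=(q_j)\in Q$ with $\varphi(t)=tq$ for all $t\in I$, viewing $R\subseteq Q$. It remains to check $q\in R$: pick any $0\neq t\in I$ lying outside every $\mathfrak{p}_j$ that does not contain $I$ — hmm, this needs care. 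Instead: if $I=0$ the claim is trivial, so assume $I\neq 0$ and let $J=\{j : I\not\subseteq\mathfrak p_j\}$. For $j\notin J$, $q_j$ can be taken to be the image of any fixed element, but actually $\varphi(I)\subseteq I$ and $\varphi(t)\in tR$, so there is $t\in I$ with $\varphi(t)=tr$, $r\in R$; then for this $t$, $q_j \cdot \bar t = \bar t\bar r$ in $K_j$ for all $j$, and choosing $t$ not a zerodivisor (possible by prime avoidance since $I\not\subseteq$ union of the minimal primes unless $I=0$, using that the zerodivisors of a reduced ring with finitely many minimal primes are exactly the union of those primes) forces $q_j=\bar r$ for all $j\in J$, hence $q=\bar r\in R$ up to components outside $J$ which we set to $0$; finally $\varphi(t')=t'r$ for all $t'\in I$ since both sides agree in every $K_j$. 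The point where I expect the most friction is exactly this last bookkeeping in \eqref{prop:Hmodulesred} — matching up the locally-defined multipliers $q_j$ into a single honest element of $R$, and correctly invoking the description of zerodivisors in a reduced ring with finitely many minimal primes (so that prime avoidance yields a non-zerodivisor inside $I$). Everything else is a routine consequence of clearing denominators or localizing.
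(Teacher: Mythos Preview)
Your proofs of \eqref{prop:Hmodulesprin}, \eqref{prop:Hmodulesdom}, and \eqref{prop:HmodulesDed} match the paper's exactly.

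In \eqref{prop:Hmodulesred} your overall strategy --- project to each $R/\mathfrak{p}_j$, use \eqref{prop:Hmodulesdom} to obtain a multiplier in each component, then glue them into a single $r\in R$ using $\bigcap_j \mathfrak{p}_j=0$ --- is the same as the paper's. However, your gluing step contains a genuine error. You assert that unless $I=0$ one can choose $t\in I$ that is not a zerodivisor, i.e.\ $t\notin\bigcup_j \mathfrak{p}_j$. This is false: take $R=k\times k$ for a field $k$, with minimal primes $\mathfrak{p}_1=k\times 0$ and $\mathfrak{p}_2=0\times k$, and $I=\mathfrak{p}_1$; then $I\neq 0$ but every element of $I$ is a zerodivisor. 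Prime avoidance only says that $I\subseteq\bigcup_j\mathfrak{p}_j$ forces $I\subseteq\mathfrak{p}_j$ for \emph{some} $j$, not that $I=0$.

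The fix is immediate and is exactly what the paper does: you do not need a non-zerodivisor, only some $t_0\in I$ lying outside the primes $\mathfrak{p}_j$ with $j\in J$ (those \emph{not} containing $I$), and such $t_0$ exists by prime avoidance applied to $\{\mathfrak{p}_j:j\in J\}$. For this $t_0$ the image $\bar t_0$ is nonzero, hence cancellable, in $K_j$ for each $j\in J$, which is all you use to conclude $q_j=\bar r$ there. For $j\notin J$ one has $I\subseteq\mathfrak{p}_j$, so for every $t'\in I$ both $\varphi(t')\in t'R\subseteq\mathfrak{p}_j$ and $t'r\in\mathfrak{p}_j$ automatically; reducedness then gives $\varphi(t')=t'r$. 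Your own narrative correctly flags this step as the friction point --- the only repair needed is to weaken ``non-zerodivisor'' to ``outside $\bigcup_{j\in J}\mathfrak{p}_j$''.
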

 \begin{proof} 
 \eqref{prop:Hmodulesprin} Assume that $I=sR$ is a principal ideal and $\varphi\in Z_R(I,M)$. Write $\varphi(s)=sm$ with $m\in M$. Since $\varphi$ is an $R$-module homomorphism, it is clear that $\varphi(rs)=rsm$ for all $r\in R$ and hence $\varphi$ is principal.

 \eqref{prop:Hmodulesdom} Assume that $R$ is an integral domain. The claim is clear if $I=0$, so assume that $I\neq 0$. Choose some nonzero $s\in I$ and write $\varphi(s)=sm$ for some $m\in M$. For every $t\in I$ we have $$s\varphi(t)=\varphi(st)=t\varphi(s)=stm.$$ Since $M$ is torsion-free, we get $\varphi(t)=tm$. Therefore $\varphi$ is principal.

 \eqref{prop:HmodulesDed} If $R$ is a discrete valuation ring, the claim follows from \eqref{prop:Hmodulesprin} as every ideal of $R$ is  principal. In general, the proof follows from a standard localisation argument using Proposition \ref{prop:Hmodbasic}.\eqref{prop:Hmodbasicmult}; indeed, for every maximal ideal $\mathfrak p$  of $R$, $R_{\mathfrak p}$ is a discrete valuation ring, so we get $H_R(I,M)_{\mathfrak p}\subset H_{R_{\mathfrak p}}(I_{\mathfrak p},M_{\mathfrak p})=0$, and hence $H_R(I,M)=0$ by \cite[Lemma 2.8]{book:Eisenbud}. 

\eqref{prop:Hmodulesred} Choose a map $\varphi \in Z_R(I,R)$. We will show that $\varphi$ is principal. Let $\mathfrak p_1, \dots, \mathfrak p_n$ be the minimal prime ideals of $R$. By Proposition \ref{prop:Hmodules}.\eqref{prop:Hmodulesdom} and Proposition \ref{prop:Hmodbasic}.\eqref{prop:Hmodbasicann},  the modules $H_R(I,R/\mathfrak p_i)=H_{R{/}\mathfrak p_i}((I+\mathfrak p_i){/}\mathfrak p_i,R/\mathfrak p_i)$ vanish for all $i$.   Composing $\varphi$ with the canonical projections $R\to R/\mathfrak{p_i}$ and using the fact that $H_R(I,R/\mathfrak p_i)=0$, we obtain elements $r_i\in R$ such that \begin{equation}\label{eq:Hmodred} \varphi(t)-tr_i \in \mathfrak p_i \quad  \quad \text{ for  all }  t\in I.\end{equation}

 After renumbering the prime ideals $\mathfrak p_1, \dots, \mathfrak p_n$, we may assume that $I$ is contained precisely in $\mathfrak p_1, \dots, \mathfrak p_m$ for some $0\leq m\leq n$. By prime avoidance (see \cite[Lemma 3.3]{book:Eisenbud}), there exists an element $t_0 \in I \setminus  \bigcup_{i=m+1}^n \mathfrak p_i$. By definition of $\varphi$, there exists $r\in R$ such that $\varphi(t_0)=t_0r$. Applying \eqref{eq:Hmodred} for $t=t_0$, we get $t_0r-t_0r_i \in \mathfrak p_i$ for all $i$, and hence $r-r_i \in \mathfrak p_i$ for $ m+1\leq i \leq n$. 

We claim that $\varphi(t)=tr$ for all $t\in I$. Since the ring $R$ is reduced, $\mathfrak p_1 \cap \dots \cap \mathfrak p_n = \mathrm{nil}(R) =0$ (see \cite[Corollary 2.12]{book:Eisenbud}), and hence it is enough to check that $\varphi(t)-tr \in \mathfrak p_i$ for all $i$. This is true for $i \leq m$ since in this case both $\varphi(t)$ and $tr$ lie in $\mathfrak p_i$, and for $i \geq m+1$ since in this case $\varphi(t)-tr_i$ and $r-r_i \in \mathfrak p_i$.
 \end{proof}

In the remaining part of this section we give examples when modules $H_R(I,M)$ do not vanish. Such examples are rather easy to construct if $M$ is not assumed to be finitely generated, but are more involved otherwise. We give three examples of triples $(R,I,M)$ for which $H_R(I,M)\neq 0$: \begin{enumerate} \item[(a)] when $R$ is a noetherian integral domain and $M$ is a torsion and not finitely generated $R$-module; \item[(b)] when $R$ is a local artinian ring and $M=R$; \item[(c)] when $R$ is a noetherian integral domain and $M$ is a finitely generated (torsion) $R$-module.\end{enumerate}

\begin{example} Let $k$ be a field, $R=k[X,Y]$, $I=(X,Y)$, and $M=k(X,Y)/k[X,Y]$.  Then $H_R(I,M)\neq 0$. \end{example}
\begin{proof} For $f\in k(X,Y)$, we denote by $\overline{f}$ its image by the quotient map $k(X,Y)\to k(X,Y)/k[X,Y]$. Consider the unique $R$-linear map $\varphi\colon I \to M$ such that $\varphi(X)=\overline{Y^{-1}}, \varphi(Y)=\overline{YX^{-1}}$. Since $tM=M$ for all nonzero $t\in R$, it is clear that $\varphi \in Z_R(I,M)$. We claim that  $\varphi \not\in B_R(I,M)$. Indeed, suppose otherwise and write $\varphi(t)=t\overline{f}$ for some $f\in k(X,Y)$. Comparing the values of $\varphi(X)$ and $\varphi(Y)$, we get $Xf-Y^{-1}=g$ and $Yf-YX^{-1}=h$ for some $g,h \in k[X,Y]$. Thus $Yg-Xh=Y-1$, which gives a contradiction. \end{proof}

\begin{example}\label{examplemod1} Let $k$ be a field and consider the ring \begin{equation}\label{ringcount} R=k[X,Y,Z,W]/\left((X,Y,Z,W)^3+(Z^2, ZW, W^2, YW, YZ-XW)\right).\end{equation} We denote by $x,y,z,w\in R$ the images of $X,Y,Z,W$ in $R$. Let $I=(x,y)R$. Then $H_R(I,R)\neq 0$.
\end{example}
\begin{proof} The ring $R$ is a $10$-dimensional $k$-algebra with basis $1, x, y, z, w, x^2, xy, y^2, xz, xw$. The ideal $I$ is a vector subspace with basis $\mathcal B$ given by $x, y, x^2, xy, y^2, xz, xw$. Define the map $\varphi\in \Hom_R(I,M)$ on the basis $\mathcal B$ by mapping $x$ to $\varphi(x)=xz$ and mapping the remaining elements of $\mathcal B$ to $0$. It is easy to check that this is an $R$-module homomorphism. We claim that $\varphi\in Z_R(I,M)$. Any $t\in I$ can be written as $t =\lambda x + s$ with $\lambda \in k$ and $s\in J=(y, x^2, xz, xw)R$. We claim that $\varphi(t)\in tR$. This is clear if $\lambda = 0$, since then $\varphi(t)=0$. On the other hand if $\lambda \neq 0$,  we may rewrite $t$ in the form $t=ux+vy$ for a unit $u\in R^*$ and $v\in R$. Then $$\varphi(t)=\lambda x z = (ux+vy)(z-u^{-1}vw) $$ lies in the ideal $tR$.

We now claim that $\varphi \notin B_R(I,M)$. Suppose the contrary. Then there exists $r\in R$ such that $\varphi(t)=tr$ for all $t\in I$. Putting $t=x$ and $t=y$ gives $xr=xz$ and $yr=0$. Writing $r$ in the basis of $R$ and using these equalities gives a contradiction.\end{proof}

\begin{example}\label{examplemod2} Let $R=k[X,Y,Z,W]$, $I=(X,Y)R$, and let $M$ denote the quotient ring considered in \eqref{ringcount}, now regarded as an $R$-module. Then $H_R(I,M)\neq 0$.\end{example}
\begin{proof} Follows from Example \ref{examplemod1} and Proposition \ref{prop:Hmodbasic}.\eqref{prop:Hmodbasicann}.\end{proof}

\begin{theorem}\label{corprinhom} Let $R$ be a ring and let $M$ be an $R$-module. Assume that one of the following assumptions holds: \begin{enumerate} \item[(a)] either $R$ is an integral domain and $M$ is a torsion-free module; or \item[(b)] $R$ is a Dedekind integral domain; or \item[(c)] $R$ is a reduced ring with finitely many minimal prime ideals and $M=R$.\end{enumerate}
 Let $\mathbf{A}$ be a $k\times l$ matrix with entries in $R$ and let $\mathbf{b}\in M^k$ be nonzero. The following conditions are equivalent: \begin{enumerate} \item\label{thmnonhomsyst1} The equation $\mathbf{Am}=\mathbf{b}$ is partition regular over $M$. \item\label{thmnonhomsyst2} The equation  $\mathbf{Am}=\mathbf{b}$  has a constant solution in $M$.\end{enumerate}\end{theorem}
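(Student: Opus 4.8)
The plan is to deduce this statement immediately from the two results just established, namely Theorem~\ref{thmnonhomsyst} and Proposition~\ref{prop:Hmodules}. Recall that Theorem~\ref{thmnonhomsyst} asserts precisely that partition regularity of $\mathbf{Am}=\mathbf{b}$ over $M$ is equivalent to the existence of a constant solution, under the single hypothesis that $H_R(I,M)=0$, where $I=(a_1,\dots,a_k)R$ is the ideal generated by the row-sums $a_i=\sum_{j=1}^{l} a_{ij}$ of $\mathbf{A}$. Hence the entire content of the proof is to verify that each of the assumptions (a), (b), (c) forces $H_R(I,M)=0$.

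This is exactly what Proposition~\ref{prop:Hmodules} supplies. Under assumption (a) --- $R$ a domain and $M$ torsion-free --- Proposition~\ref{prop:Hmodules}.\eqref{prop:Hmodulesdom} gives $H_R(I,M)=0$ for every ideal $I$, in particular for the ideal of row-sums. Under assumption (b) --- $R$ a Dedekind domain --- Proposition~\ref{prop:Hmodules}.\eqref{prop:HmodulesDed} does the same. Under assumption (c) --- $R$ reduced with finitely many minimal primes and $M=R$ --- Proposition~\ref{prop:Hmodules}.\eqref{prop:Hmodulesred} gives $H_R(I,R)=0$. In all three cases Theorem~\ref{thmnonhomsyst} applies verbatim and yields the stated equivalence, the implication from the existence of a constant solution to partition regularity being trivial in any event.

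I do not anticipate any real obstacle at this point: all of the substantive work has already been carried out, on the one hand in proving Theorem~\ref{thmnonhomsyst} (reducing partition regularity of a system to the single-equation case of Theorem~\ref{lem:nonhomogenous-one_eq} by taking $R$-linear combinations of the rows and packaging the resulting data as a homomorphism $\varphi\in Z_R(I,M)$), and on the other hand in proving the vanishing results of Proposition~\ref{prop:Hmodules} (the torsion-free cancellation argument, the localisation argument reducing the Dedekind case to discrete valuation rings, and the prime-avoidance argument in the reduced case). The only thing requiring a moment's care is bookkeeping: one must match the hypotheses correctly --- in particular, case (c) is stated with $M=R$ precisely because Proposition~\ref{prop:Hmodules}.\eqref{prop:Hmodulesred} is available only for $M=R$, and ``reduced with finitely many minimal prime ideals'' is the exact level of generality of that result (covering reduced noetherian rings, so as to match the corresponding case of Theorem~\ref{mainthmC}; note that the case $k=1$ of Theorem~\ref{mainthmC} is handled separately by Theorem~\ref{lem:nonhomogenous-one_eq} and is not subsumed here). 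It is also worth noting in the write-up that assumptions (a)--(c) are in no way forced: any triple $(R,I,M)$ with $H_R(I,M)=0$ would serve, while the examples following Proposition~\ref{prop:Hmodules} show that some such vanishing hypothesis is genuinely needed.
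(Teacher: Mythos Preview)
Your proposal is correct and matches the paper's own proof exactly: the paper simply states that the result follows immediately from Theorem~\ref{thmnonhomsyst} and Proposition~\ref{prop:Hmodules}. Your additional remarks about bookkeeping and the role of the vanishing hypothesis are accurate and consistent with the surrounding discussion in the paper.
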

 \begin{proof} Follows immediately from Theorem \ref{thmnonhomsyst}  and Proposition \ref{prop:Hmodules}.\end{proof}

Note that vanishing of the modules $H_R(I,M)$ is only needed for the particular method of the proof, and not necessarily for the claim of Theorem \ref{corprinhom}. For example, if in Example \ref{examplemod1} the field $k$ (and hence the ring $R$) is finite, then the claim of Theorem \ref{corprinhom} certainly holds (since we may give each element of $R$ a different colour), even though $H_R(I,M)$ does not vanish. We do not know any example of a ring $R$ and an $R$-module $M$ for which the conclusion of  Theorem \ref{corprinhom} fails.   
\begin{question}\label{que:inhomogeneous} Does there exist a ring $R$, an $R$-module $M$, a $k\times l$ matrix $\mathbf{A}$ with entries in $R$, and a nonzero $\mathbf{b} \in M^k$ such that the equation $\mathbf{Am}=\mathbf{b}$ is partition regular over $M$ even though it does not have any constant solutions in $M$? Can one choose $M=R$?
\end{question}

\section*{Note added in proof} We note that a complete answer to Question \ref{que:inhomogeneous} has now been obtained by Leader and Russell, who showed that for any $R$-module $M$ and any matrix $\mathbf{A}$ the equation $\mathbf{Am}=\mathbf{b}$ is partition regular over $M$ if and only if it has a constant solution \cite{LR-2020}.

\providecommand{\bysame}{\leavevmode\hbox to3em{\hrulefill}\thinspace}
\providecommand{\MR}{\relax\ifhmode\unskip\space\fi MR }
\providecommand{\MRhref}[2]{%
  \href{http://www.ams.org/mathscinet-getitem?mr=#1}{#2}
}
\providecommand{\href}[2]{#2}

\end{document}